\crefname{equation}{Eq.}{Eqs.}
\crefname{assumption}{Assumption}{Assumptions}
\crefname{algocf}{Algorithm}{Algorithms}
\newcommand{\crefrangeconjunction}{--}
\renewenvironment{proof}[1][\proofname]{\par
  \pushQED{\qed}%
  \normalfont \topsep6\p@\@plus6\p@\relax
  \trivlist
  \item[\hskip\labelsep
        \normalfont\bfseries 
    #1\@addpunct{.}]\ignorespaces
}{
  \popQED\endtrivlist\@endpefalse
}
\newcommand{\vx}{\bm{x}}
\newcommand{\vy}{\bm{y}}
\newcommand{\vz}{\bm{z}}
\newcommand{\vc}{\bm{c}}
\newcommand{\vv}{\bm{v}}
\newcommand{\vk}{\bm{k}}
\newcommand{\mX}{\bm{X}}
\newcommand{\mD}{\bm{D}}
\newcommand{\mK}{\bm{K}}
\newcommand{\mI}{\bm{I}}
\newcommand{\mL}{\bm{L}}
\newcommand{\fX}{\mathcal{X}}
\newcommand{\fC}{\mathcal{C}}
\newcommand{\fN}{\mathcal{N}}
\newcommand{\fI}{\mathcal{I}}
\newcommand{\fT}{\mathcal{T}}
\newcommand{\fE}{\mathcal{E}}
\newcommand{\fF}{\mathcal{F}}
\newcommand{\hf}{\hat{f}}
\renewcommand{\hm}{\hat{m}}
\newcommand{\hell}{\hat{\ell}}
\newcommand{\hsigma}{\hat{\sigma}}
\newcommand{\hbeta}{\hat{\beta}}
\newcommand{\hvarSigma}{\hat{\varSigma}}
\newcommand{\hvx}{\hat{\bm{x}}}
\newcommand{\hmX}{\hat{\bm{X}}}
\DeclareMathOperator{\EE}{\mathbb{E}}
\DeclareMathOperator{\PP}{\mathbb{P}}
\DeclareMathOperator{\Var}{\mathrm{Var}}
\DeclareMathOperator*{\argmax}{arg\,max}
\DeclareMathOperator*{\argmin}{arg\,min}
\DeclareMathOperator{\1}{\mathds{1}}
\DeclareMathOperator{\vol}{vol}
\DeclareMathOperator{\diag}{diag}
\DeclareMathOperator{\diam}{diam}
\DeclareMathOperator{\Beta}{Beta}
\newcommand{\RR}{\mathbb{R}}
\newcommand{\set}[1]{\left\{#1\right\}}
\newcommand{\transpose}{{\mkern-1.0mu\mathsf{T}}}
\newcommand{\dif}{\mathop{}\!\mathrm{d}}
\newcommand{\closure}[1]{\overline{#1}}
\newcommand{\card}[1]{\# #1}
\newcommand{\bigO}{O}
\newcommand{\littleo}{o}
\newcommand{\bigOmega}{\Omega}
\newcommand{\bigTheta}{\Theta}
\newcommand{\ptogiven}[1]{\overset{\PP[\cdot|#1]}{\longrightarrow}}
\newcommand{\pto}{\overset{\PP}{\longrightarrow}}
\newcommand{\asto}{\overset{\mathrm{a.s.}}{\longrightarrow}}
\newcommand{\given}{\;\middle|\;}
\newcommand{\unitball}{\mathcal{B}}
\newcommand{\ellmax}{\ell_{\max}}
\newcommand{\hellmax}{\hell_{\max}}
\newcommand{\filldist}{h}
\newcommand{\Nacq}{N_{\mathrm{acq}}}
\newcommand{\CumReg}{\mathscr{R}}
\newcommand{\SimReg}{\mathscr{S}}
\newcommand{\proofparagraph}[1]{\par\medskip\noindent{\bfseries\textsf{#1}}\quad}
\title{Bayesian Optimization by Kernel Regression and Density-based Exploration}
\author{
    Tansheng Zhu\textsuperscript{1}  \and 
    Hongyu Zhou\textsuperscript{2} \and 
    Ke Jin\textsuperscript{2} \and 
    Xusheng Xu\textsuperscript{3} \and 
    Qiufan Yuan\textsuperscript{3} \and 
    Lijie Ji\textsuperscript{4,5}\thanks{Corresponding author: \email{lijieji@shu.edu.cn}}
}
\institute{
    \textsuperscript{1}Zhiyuan College, Shanghai Jiao Tong University, Shanghai 200240, P. R. China. \\
    \textsuperscript{2}School of Mathematical Sciences, Shanghai Jiao Tong University, Shanghai 200240, P. R. China. \\
    \textsuperscript{3}Shanghai Institute of Aerospace Systems Engineering, Shanghai 201109, P. R. China. \\
    \textsuperscript{4}Department of Mathematics, Shanghai University, Shanghai 200444, P. R. China. \\
    \textsuperscript{5}Newtouch Center for Mathematics of Shanghai University, Shanghai University, Shanghai 200444, P. R. China.
}
\date{}
\begin{document}

\maketitle

\begin{abstract}
Bayesian optimization is highly effective for optimizing expensive-to-evaluate black-box functions, but it faces significant computational challenges due to the cubic per-iteration cost of Gaussian processes, which results in a total time complexity that is quartic with respect to the number of iterations.
To address this limitation, we propose
a novel algorithm, Bayesian optimization by kernel regression and density-based exploration (BOKE).
BOKE uses kernel regression for efficient function approximation, kernel density for exploration, and integrates them into the confidence bound criteria to guide the optimization process, thus reducing computational costs to quadratic.
Our theoretical analysis rigorously establishes the global convergence of BOKE under noisy evaluations. 
Through extensive numerical experiments on both synthetic and real-world optimization tasks, we demonstrate that BOKE not only performs competitively compared to Gaussian process-based methods and several other baseline methods but also exhibits superior computational efficiency. 
These results highlight BOKE's effectiveness in resource-constrained environments, providing a practical approach for optimization problems in engineering applications.

\keywords{Bayesian optimization, kernel regression, kernel density estimation, global convergence.}
\end{abstract}

\section{Introduction}
\label{sec:introduction}

Bayesian optimization (BO) is a sequential optimization algorithm known for its efficiency in minimizing the number of iterations required to optimize objective functions \cite{bergstra2011algorithms,hutter2011sequential,garnett2023bayesian}. This is particularly useful for problems where the objective function lacks an explicit expression, has no derivative information, is non-convex, or is expensive to evaluate \cite{brochu2010tutorial,mockus1978application,mockus2002bayesian}. Notably, BO remains effective even when only noisy observations of the objective function's sampling points are available.
Given its robustness in such scenarios, BO has been widely applied to hyperparameter tuning in machine learning models \cite{snoek2012practical,li2018hyperband} and experimental design \cite{chaloner1995bayesian,press2009bandit,pourmohamad2021bayesian}, and Monte Carlo planning \cite{mern2021bayesian,kim2020monte}.

The sampling criterion of the BO algorithm for selecting the next sampling point is determined by an acquisition function, such as probability of improvement (PI) \cite{kushner1964new}, expected improvement (EI) \cite{mockus1978application,jones1998efficient}, or Gaussian process upper confidence bound (GP-UCB) \cite{srinivas2010gaussian}. The latter two functions balance exploitation and exploration: exploitation focuses on maximizing the mean of the surrogate model for the objective function, while exploration minimizes the surrogate model's variance to encourage sampling in uncertain regions.
However, evaluating these acquisition functions traditionally relies on a Gaussian process (GP) surrogate, which requires inverting the kernel covariance matrix to compute the posterior predictive distribution \cite{rasmussen2006gaussian}.
This incurs a per-iteration cost of $\bigO(t^3)$ for $t$ observations.
Scaling to $T$ iterations, this yields an overall quartic complexity, rapidly becoming prohibitive when the algorithmic overhead dominates the time required to evaluate the objective \cite{hase2018phoenics}.

A direct approach to mitigate this cubic bottleneck is to accelerate the underlying GP computations.
From an algorithmic perspective, a prominent line of research relies on sparse and variational approximations to the GP posterior \cite{liu2020gaussian,mutny2018efficient,titsias2009variational,hensman2015mcmc,tautvaivsas2024scalable,jimenez2023scalable}.
Complementing these algorithmic developments, significant efforts have focused on leveraging hardware acceleration and distributed computing.
For instance, exact and low-rank GPs have been scaled to massive datasets via multi-GPU parallelization and iterative solvers \cite{wang2019exact,zhao2025nugpr}.
Within the broader BO pipeline, this computational scalability is often realized at the system level through GPU-native frameworks \cite{balandat2020botorch} and asynchronous, decentralized architectures \cite{egele2023asynchronous}.
Beyond sheer computational constraints, successfully scaling GP-BO to real-world applications also requires overcoming the exponential complexity of high-dimensional search spaces \cite{eriksson2019scalable,de2021greed,hvarfner2024vanilla} and modeling the non-stationarity and heteroskedasticity typical of practical hyperparameter tuning landscapes \cite{cowen2022hebo}.

Instead of modifying the GP, an alternative paradigm bypasses the surrogate model entirely to directly estimate the acquisition function.
The tree-structured Parzen estimator (TPE) reformulates the EI function as a density ratio estimation problem under specific conditions, avoiding the need for direct computation of the GP mean. This method performs well as the number of observations increases \cite{bergstra2011algorithms}.
Bayesian optimization by density-ratio estimation (BORE) transforms density-ratio estimation into a weighted classification problem, enabling estimation through likelihood-free inference \cite{tiao2021bore}. Building on this approach, likelihood-free Bayesian optimization (LFBO) generalizes the framework to accommodate any acquisition function expressed in the form of expected utility \cite{song2022general,gaudrie2024empirical}.

A third approach retains the surrogate-based framework but replaces the GP with inherently more scalable models.
For example, sequential model-based algorithm configuration (SMAC) employs a random forest regressor and uses the standard deviation among trees to estimate uncertainty \cite{hutter2011sequential,hutter2012parallel}. Phoenics leverages Bayesian neural networks to model the sample density function and proposes an acquisition function based on kernel densities \cite{hase2018phoenics}.
Global optimization with learning and interpolation surrogates (GLIS) and its extensions adopt inverse distance weighting and radial basis functions to construct the exploration term, achieving competitive performance with reduced computational overhead \cite{bemporad2020global,previtali2023glisp}.
Pseudo-Bayesian optimization (PseudoBO) introduces an axiomatic framework that combines a surrogate predictor, an uncertainty quantifier, and an acquisition function, ensuring algorithmic convergence \cite{chen2023pseudo}.

In this paper, we propose a novel Bayesian optimization algorithm, Bayesian optimization by kernel regression and density-based exploration (BOKE). This algorithm comprises three main components: (i) a kernel regression surrogate for efficient function approximation; (ii) a kernel density–based exploration strategy; and (iii) an improved kernel regression upper confidence bound (IKR-UCB) acquisition function that balances exploitation and exploration. The IKR-UCB function can be viewed as an extension of the classical upper confidence bound (UCB) \cite{auer2002finite} by replacing the mean and counting terms with their kernel regression and kernel density counterparts.
We compare the vanishing-bandwidth limits of IKR-UCB's components with those of GP-UCB, and demonstrate the effectiveness of our density-based exploration through space-filling design experiments.
BOKE attains convergence guarantees comparable to GP-based methods while substantially reducing computational cost.
From a theoretical perspective, we derive prediction error bounds for kernel regression formulated in terms of kernel density. 
Crucially, the analytical structure of these bounds naturally motivates the exact form of the IKR-UCB acquisition function.
We further establish the algorithmic consistency of BOKE in noisy settings, showing that, with probability one, the query points generated by BOKE are dense in the decision set. Building on this consistency, we prove global convergence of the simple regret. Additionally, we derive upper bounds on the cumulative regret for both general and finite decision sets, which in turn inform a principled choice of the trade-off parameter.
To mitigate potential over-exploration, we develop the BOKE+ variant, which enhances exploitation via an $\epsilon$-greedy scheme \cite{de2021greed}.
Finally, we validate the superior performance and computational efficiency of BOKE and BOKE+ across a range of synthetic and real-world optimization tasks.

The paper is organized as follows. In \cref{sec:background}, we introduce notations, problem formulation, and provide a brief overview of Bayesian optimization. \cref{sec:methodology}  details the proposed BOKE algorithm, covering its kernel regression surrogate, density-based exploration strategy, extensions, and computational complexity.
Theoretical results, including the prediction error bounds of kernel regression, the consistency and regret analysis of the proposed algorithms, are discussed in \cref{sec:convergence analysis}.
\cref{sec:numerics} presents numerical results demonstrating the superior optimization performance and computational efficiency of the proposed algorithms. Finally, \cref{sec:conclusions} concludes the paper with a discussion of future research directions.

\section{Background}
\label{sec:background}

Throughout this paper, we use $\bigO(\cdot)$, $\littleo(\cdot)$, $\bigOmega(\cdot)$, and $\bigTheta(\cdot)$ to denote standard asymptotic complexities.
The indicator function is denoted by $\1_{\set{\cdot}}$.
For a set $S$, its cardinality is denoted by $\#S$.
We denote by $\mathbb{R}^d$ the $d$-dimensional Euclidean space, $\| \cdot \|_p$ to denote the $p$-norm for $1 \le p \le \infty$ (with $p = 2$ if unspecified), and $B(\vx, r)$ represents the $d$-dimensional ball centered at $\vx$ with radius $r$.
The distance between a point $\vx \in \RR^d$ and a set $S \subset \RR^d$ is denoted by $d(\vx, S) = \inf_{\vx' \in S} \|\vx - \vx'\|$.
For any positive integer $t$, we denote by $\mX_t = \set{\vx_i}_{i=1}^{t} \subset \fX$ a sequence of observed points, $\vy_t = \set{y_i}_{i=1}^{t} \subset \mathbb{R}$ a sequence of real values, and $\mD_t = \set{(\vx_i, y_i)}_{i=1}^{t}$ the entire dataset.

\subsection{Problem formulation}
\label{sec:problem formulation}

Let $f$ be a black-box function defined over a decision set $\fX \subset \RR^d$. The optimization problem is formulated as follows:
\begin{equation}
\label{eqn:max objective}
    \vx^* \in \argmax_{\vx \in \fX} f(\vx),
\end{equation}
where $f: \fX \to \RR$ may be a non-convex function that lacks an explicit expression and can only be evaluated through noisy samples. 
In particular, the observations are of the form $y = f(\vx) + \varepsilon$, where $\varepsilon$ denotes the additive sampling noise.

Given a time horizon $T$, the performance of a sequential optimization algorithm can be evaluated by the following metrics:
\begin{itemize}
    \item The \textit{cumulative regret} is defined as
    \begin{equation}
    \label{eqn:cumulative regret}
       \CumReg_T \coloneqq \sum_{t=1}^{T} \left[f(\vx^*) - f(\vx_t)\right] = \sum_{t=1}^{T} r_t.
    \end{equation}
    where $r_t \coloneqq f(\vx^*) - f(\vx_t)$ is the \textit{instantaneous regret} at the $t$-th iteration. The procedure that selects the sequence of evaluation points $\set{\vx_t}_{1 \le t \le T}$ is called the allocation strategy \cite{bubeck2011pure}.

    \item The \textit{simple regret} is
    \begin{equation}\label{eqn:simple regret}
        \SimReg_T \coloneqq f(\vx^*) - f(\hvx^*_T),
    \end{equation}
     where $\hat{\vx}^*_T$ denotes the candidate optimizer after $T$ function evaluations, determined by a recommendation strategy.
    In empirical benchmark evaluations, this strategy is typically evaluated via an oracle that identifies the best queried point $\hvx^*_T \in \argmax_{\vx \in \mX_T} f(\vx)$, yielding $\SimReg_T = \min_{1 \le t \le T} r_t$. 
    While this standard definition suffices for empirical evaluations, our theoretical analysis under noisy observations adopts a more sophisticated recommendation strategy based on surrogate estimators \cite{bubeck2011pure,vakili2021optimal}, which we formally define in \cref{sec:regret analysis}.
\end{itemize}

\subsection{Bayesian optimization}

Bayesian optimization (BO) \cite{pourmohamad2021bayesian,garnett2023bayesian} is a sequential design strategy for efficiently solving \cref{eqn:max objective}.
BO uses an acquisition function to balance exploitation and exploration.
The acquisition function guides the selection of query points to improve the surrogate model and locate the global optimum. The pseudocode is presented in \cref{alg:BO}.

\begin{algorithm}[!htb]
    \caption{Bayesian optimization}
    \label{alg:BO}
    \KwIn{initial dataset $\mD_{T_0} = \set{(\vx_i, y_i)}_{i=1}^{T_0}$, budget $T$.}
    \For{$t = T_0, \dots, T-1$}{
        Fit a surrogate model on the dataset $\mD_t$. \;
        Select $\vx_{t+1}$ by maximizing the acquisition function over $\fX$. \;
        Evaluate the objective function $f$ at $\vx_{t+1}$ to obtain $y_{t+1}$. \;
        Augment dataset $\mD_{t+1} \gets \mD_{t} \cup \set{(\vx_{t+1}, y_{t+1})}$. \;
    }
\end{algorithm}

This algorithm mainly consists of two primary steps:
First, a surrogate model is constructed, and the corresponding acquisition function is generated.
Second, a new point is obtained by maximizing the acquisition function, followed by updating the surrogate model.
Starting with the initial design, these steps are iteratively repeated to efficiently allocate the remaining evaluation budget and minimize the cumulative regret or simple regret.

\section{The BOKE method}
\label{sec:methodology}

This section constructs the BOKE algorithm in three steps:
first, a kernel regression surrogate model; second, a density-based exploration strategy built via kernel density estimation; and third, an IKR-UCB acquisition function based on the preceding two components. 
The remainder of the section details these components, presents the complete BOKE algorithm (\cref{alg:BOKE}) and its variant BOKE+ (\cref{alg:BOKE+}), and analyzes their computational complexity relative to baseline BO algorithms.

\subsection{Kernel regression surrogate function}
\label{sec:kernel regression surrogates}
Kernel regression (KR), also known as the Nadaraya-Watson estimator \cite{nadaraya1964estimating, watson1964smooth}, is a widely recognized method in nonparametric regression. The expected value at a target point $\vx$ is estimated as a weighted average of the observed dataset values, with weights determined by a kernel function and the distances between the data points and the target point.
Let $\mD_t = \set{(\vx_i, y_i)}_{i=1}^{t} \subset \fX \times \RR$ be the dataset, the predicted expected value for any $\vx \in \fX$ is computed as:
\begin{equation}
    \label{eqn:KR}
    m_t(\vx) = m(\vx; \mD_t)
    \coloneqq \frac{\sum_{i=1}^{t} k(\vx, \vx_i) y_i}{\sum_{i=1}^{t} k(\vx, \vx_i)},
\end{equation}
where $k(\cdot,\cdot)$ is the kernel function. 
The kernel is typically assumed to be stationary, meaning that there exists a function $\Psi: \RR^d \to [0, \infty)$ such that $k(\vx, \vx') = \Psi\left( (\vx - \vx') / \ell \right)$ for all $\vx, \vx' \in \fX$, where $\ell > 0$ is the bandwidth that controls the length-scale of kernel.
Examples of kernels include (see, e.g., \cite[Section 6.2.3]{scott2015multivariate})
\[\renewcommand{\arraystretch}{1.2}
    \begin{array}{ll}
        \text{(Gaussian)} & \quad \Psi(\vx) = \exp\left(- \| \vx \|^2 / 2 \right), \\
        \text{(Triangular)} & \quad \Psi(\vx) = \left( 1 - \| \vx \| \right) \1_{\set{\| \vx \| \le 1}} , \\
        \text{(Epanechnikov)} & \quad \Psi(\vx) = \left( 1 - \| \vx \|^2 \right) \1_{\set{\| \vx \| \le 1}}, \\
        \text{(Quartic)} & \quad \Psi(\vx) = \left( 1 - \| \vx \|^2 \right)^2 \1_{\set{\| \vx \| \le 1}}, \\
    \end{array}
\]
where the coefficients are scaled such that $\Psi(\bm{0}) = 1$.

KR has a rich history in statistical learning and sequential decision-making. For instance, it has been used within the Monte Carlo tree search (MCTS), a classical reinforcement learning framework, for information sharing between tree nodes to address execution uncertainty \cite{yee2016monte}. It also serves as a foundational estimator in continuum-armed bandit problems \cite{agrawal1995continuum}. Recent work has combined KR with a randomized prior to design acquisition functions \cite{chen2023pseudo}.

\begin{figure}[!htb]
    \centering
    \includegraphics[width=.98\textwidth]{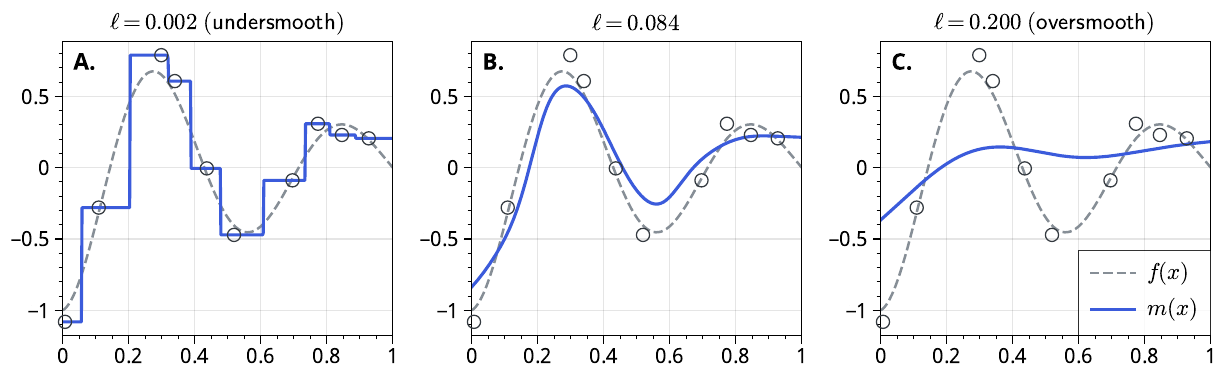}
    \caption{
        Kernel regression with Gaussian kernels at various bandwidths. 
        The objective function \cref{eqn:test} is sampled $10$ times, with each sample evaluated with additive noise $\varepsilon \sim \fN(0, 0.01)$.
    }
    \label{fig:kr}
\end{figure}

In this paper, we use KR as the surrogate model in the proposed BOKE algorithm.
In \cref{fig:kr}, we illustrate KR with a Gaussian kernel on the one-dimensional target function defined in \cref{eqn:test}:
\begin{equation}
    \label{eqn:test}
    f(x) = - e^{-1.4 x} \cos(3.5 \pi x), \quad 0 \le x \le 1.
\end{equation}
Panel B of \cref{fig:kr} shows the estimate with the bandwidth chosen by least-squares cross-validation \cite{clark1977non}.
In panel C, a large bandwidth produces an oversmoothed estimate; conversely, in panel A, a small bandwidth yields an undersmoothed, step-like estimate that approaches the nearest neighbor estimator, as formalized in \cref{pro:degenerate KR}.
This limiting behavior illustrates KR's robustness to bandwidth misspecification.
By contrast, the GP predictor converges, as the bandwidth $\ell \to 0^+$, to a weighted Dirac comb (see \cref{eqn:degenerate GPR} in \cref{app:vanishing bandwidth limits}). 
The proof generalizes \cite[Proposition 3]{kang2016kernel} by allowing repeated observations.
Note that both KR and GP estimates are linear smoothers, meaning that they can be written as $\hf(\vx) = \sum_{i=1}^{t} w_i(\vx) y_i$ with weight functions $w_i(\cdot)$ \cite{rasmussen2006gaussian}. 
Our results show that, as the bandwidth vanishes, these weight functions can differ substantially.

\begin{proposition}
    \label{pro:degenerate KR}
    Let $\Psi$ be the power exponential kernel defined by $\Psi(\vx) = \exp\left( - c \| \vx \|_p^p  \right)$ for all $\vx \in \RR^d$, with $c, p > 0$. 
    For any $\mD_t = \set{(\vx_i, y_i)}_{i=1}^{t}$ and $\vx \in \fX$,
    \begin{equation}\label{eqn:degenerate KR}
        \lim_{\ell \to 0^+} m_t(\vx)
        = \frac{\sum_{i \in \fI_t(\vx)} y_i}{\card{\fI_t(\vx)}}
        = \frac{\sum_{i=1}^{t} y_i \1_{\set{i \in \fI_t(\vx)}}}{\sum_{i=1}^{t} \1_{\set{i \in \fI_t(\vx)}}},
    \end{equation}
    where $\fI_t(\vx) \coloneqq \set{i : \vx_i \in \argmin_{\vx' \in \mX_t} \| \vx - \vx' \|_p}$ is the index set of the nearest neighbor(s) of $\vx$ with respect to the $p$-norm.
\end{proposition}

\begin{proof}
    Let $\vz \in \argmin_{\vx' \in \mX_t} \| \vx - \vx' \|_p$. 
    Dividing numerator and denominator of $m_t(\vx)$ by $\exp\left( - c \ell^{-p} \| \vx - \vz \|_p^p  \right)$ gives
    \[
        m_t(\vx) 
        = \frac{\sum_{i=1}^{t} y_i \exp\left( - c \ell^{-p} \left( \| \vx - \vx_i \|_p^p - \| \vx - \vz \|_p^p \right) \right)}{\sum_{i=1}^{t} \exp\left( - c \ell^{-p} \left( \| \vx - \vx_i \|_p^p - \| \vx - \vz \|_p^p \right) \right)}.
    \]
    For each $i \notin \fI_t(\vx)$, the difference $\|\vx - \vx_i\|_p^p - \|\vx - \vz\|_p^p$ is strictly positive, so the corresponding exponent tends to $-\infty$ as $\ell \to 0^+$ and those terms vanish. 
    Hence only indices in $\fI_t(\vx)$ contribute in the limit, which yields \cref{eqn:degenerate KR}.
\end{proof}

\subsection{Density-based exploration}
\label{sec:density-based exploration}

Kernel density estimation (KDE) \cite{rosenblatt1956remarks}, also known as Parzen's window \cite{parzen1962estimation}, is a widely-used nonparametric method for estimating probability density function. In the context of reinforcement learning, KDE is employed to design count-based exploration functions that encourage exploration of the action space \cite{bellemare2016unifying}. In Bayesian optimization, the authors of \cite{hase2018phoenics} utilized Bayesian kernel density estimation to construct an acquisition function.
Let $\mX_t = \set{\vx_1, \dots, \vx_t} \subset \fX$ be a sequence of query points, the KDE is formally expressed as:
\begin{equation}
    \hat{p}(\vx) = \frac{1}{Z} \sum_{i=1}^{t} k(\vx, \vx_i),
\end{equation}
where $Z \propto t \ell^d$ is the normalization factor such that $\int_{\fX} \hat{p}(\vx) \dif \vx = 1$, and $k$ and $\ell$ have the same meaning as in kernel regression.
From the perspective of uncertainty quantification, the component $\sum_{i=1}^{t} k(\vx, \vx_i)$ plays a central role. When no ambiguity arises, we omit the normalization factor and denote the unnormalized KDE, which scales with $t$, by
\begin{equation}
    \label{eqn:KDE}
    W_t(\vx) =W(\vx; \mX_t) \coloneqq \sum_{i=1}^{t} k(\vx, \vx_i).
\end{equation}

Note that this unnormalized density $W_t(\vx)$ exactly coincides with the denominator of the KR estimator \eqref{eqn:KR}.
Intuitively, a small value of $W_t(\vx)$ indicates higher uncertainty at $\vx$.
For illustration of KDE with different bandwidths, $20$ samples are independently drawn from the beta mixture model $p(x) = 0.7 \Beta(3, 8) + 0.3 \Beta(10, 2)$. 
The corresponding KDE is calculated and compared with the analytical beta mixture model, see \cref{fig:kde}.
The bandwidth in panel B, selected by maximum-likelihood cross-validation \cite{habbema1974stepwise,duin1976choice}, captures the two modes of the analytical model. 
In panel A, an excessively small bandwidth $\ell$ yields a highly localized density resembling a Dirac comb, failing to generalize beyond the immediate vicinity of the observations.
Generally, regions with higher density are likely to be sampled, and thus have lower uncertainty, whereas regions with lower density exhibit higher uncertainty and need more exploration.

\begin{figure}[!htb]
    \centering
    \includegraphics[width=.98\textwidth]{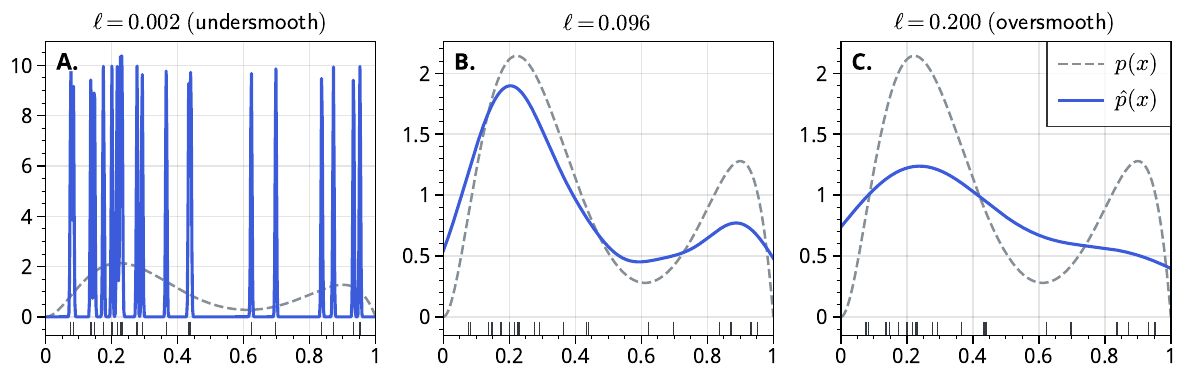}
    \caption{
        Kernel density estimation with Gaussian kernels at various bandwidths, where $\hat{p}(x) = (2 \pi)^{-1/2} (\ell t)^{-1} W_t(x)$. The $20$ samples are generated from $p(x) = 0.7 \Beta(3, 8) + 0.3 \Beta(10, 2)$.
    }
    \label{fig:kde}
\end{figure}

The \textit{fill distance}, also called the covering radius, is a measure of how well a set of points covers the decision set \cite{santner2003design}.
For a decision set $\fX$ and a design $\mX \subset \fX$ (a finite set of distinct points), the fill distance is defined as
\begin{equation}
    \label{eqn:fill distance}
    \filldist_{\fX, \mX} \coloneqq \sup_{\vx \in \fX} \min_{\vx_i \in \mX} \| \vx - \vx_i \|.
\end{equation}

Here, we propose the \textit{density-based exploration} (DE) strategy for space-filling design, which sequentially generates query points by greedily minimizing the KDE function:
\begin{equation}
    \label{eqn:min KDE}
    \vx_{t+1} \in \argmin_{\vx \in \fX} W_t(\vx).
\end{equation}
By comparison, the P-greedy algorithm \cite{de2005near}, a GP-based exploration strategy, selects query points by greedily maximizing the posterior variance at each iteration.
This produces a query sequence whose fill distance decays at a rate of $\bigTheta(t^{-\frac{1}{d}})$, matching the optimal asymptotic rate for space-filling designs \cite{wenzel2021novel}.
Rather than pursuing strict space-filling properties, alternative approaches such as EIGF \cite{lam2008sequential}, VIGF \cite{mohammadi2022cross}, and ES-LOO \cite{mohammadi2025sequential} employ response-dependent criteria to adaptively improve the global fit of GP models. In \cref{fig:fill distance}, we plot the fill distances of DE and P-greedy alongside traditional non-greedy baselines, including uniform sampling and Latin hypercube sampling (LHS) \cite{mckay1979comparison}.
As observed, the two greedy exploratory methods exhibit closely aligned decay curves that significantly outperform traditional baselines in moderate dimensions ($d \in \set{2, 5}$).
Furthermore, as dimensionality increases ($d \ge 10$), DE maintains its robust efficiency and even slightly outperforms P-greedy.
This demonstrates that KDE serves as a highly effective proxy for spatial uncertainty in space-filling designs.

\begin{figure}[!htb]
    \centering
    \includegraphics[width=.98\textwidth]{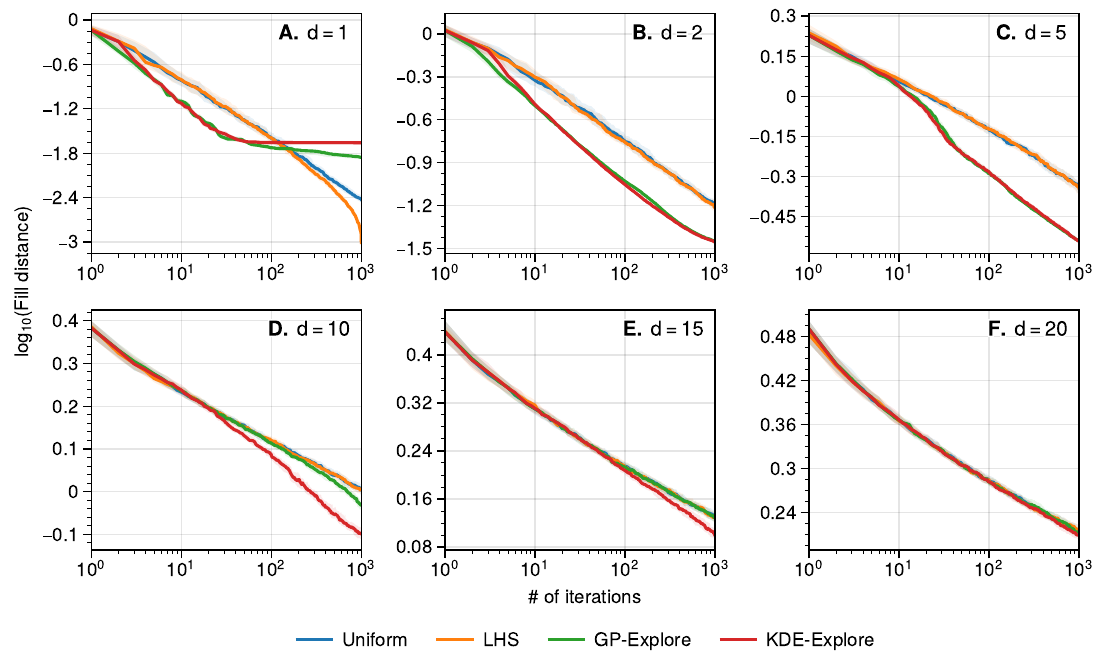}
    \caption{
        Fill distance and its distribution for different space-filling designs on $\fX = [0, 1]^d$ with $d \in \set{1, 2, 5, 10, 15, 20}$.
        In the legend, GP-Explore and KDE-Explore correspond to the P-greedy and DE strategies, respectively. Both methods use a Gaussian kernel with a fixed bandwidth $\ell = 0.1$.
        Each plot displays the median and interquartile range over $100$ runs.
        Both axes are on logarithmic scales.
    }
    \label{fig:fill distance}
\end{figure}

\subsection{Confidence bound criteria}
\label{sec:confidence bound criteria}

In multi-armed bandit problems, the \textit{upper confidence bound} (UCB) policy is a widely used allocation strategy to select the most optimistic arm at each step. 
Let $X \subset \fX$ denote the finite decision set of $M$ arms. 
The UCB1 policy \cite{auer2002finite} for the $M$-armed bandit is
\begin{equation}
    \label{eqn:max UCB1}
    \vx_{t+1} \in \argmax_{1 \le i \le M} \bar{y}_{t, i} + \sqrt{\frac{2 \ln t}{n_{t, i}}},
\end{equation}
where $\bar{y}_{t, i}$ is the empirical mean reward of arm $i$, and $n_{t, i}$ is the number of times arm $i$ has been pulled up to iteration $t$ (with the convention $1/0 = \infty$ for unpulled arms).
When extended to global optimization, confidence bound criteria are used to determine the next query point based on Gaussian process modeling, where the posterior variance of GP serves as the exploration term \cite{srinivas2010gaussian}. Additionally, kernel regression has been employed to share information between arms, thereby extending UCB1 to continuous spaces \cite{yee2016monte}.  For more details, see \cref{app:extended background}.

Motivated by UCB1, we define $\hsigma_t(\vx)$ as the density-based uncertainty function, which is monotonically decreasing in the kernel density:
\begin{equation}
    \label{eqn:DE}
    \hsigma_t(\vx) = \hsigma(\vx; \mX_t) \coloneqq (W(\vx; \mX_t) + \varrho)^{-\frac{1}{2}},
\end{equation}
where $\varrho > 0$ is a small constant added for numerical stability, and the exponent $-1/2$ is chosen to be analogous to the UCB1 scaling.
Hence, the DE strategy \eqref{eqn:min KDE} is equivalent to
\[
    \vx_{t+1} \in \argmax_{\vx \in \fX} \hsigma_t(\vx).
\]
The vanishing-bandwidth limit of $\hsigma_t(\vx)$ follows directly from its definition, namely $\lim_{\ell \to 0^+} \hsigma_t^{-2}(\vx) = \varrho + \Psi(\bm{0}) \sum_{i=1}^{t} \1_{\set{\vx = \vx_i}}$.
Similarly, the GP posterior variance also converges to the inverse of a Dirac comb (see \cref{eqn:degenerate GP variance} in \cref{app:vanishing bandwidth limits}).
Thus, both $\hsigma_t(\vx)$ and the GP posterior standard deviation induce similar exploratory behavior, and the parameter $\varrho$ plays the same role as the additive noise variance in GP (see \cref{rmk:degenerate GP vs KDE} in \cref{app:vanishing bandwidth limits}).

Using KR for exploitation and DE for exploration, we propose a novel acquisition function called the \textit{improved kernel regression upper confidence bound} (IKR-UCB).
The next query point is selected by maximizing this acquisition function, which is defined as follows:
\begin{equation}
    \label{eqn:IKR-UCB}
    a_t(\vx) = a(\vx; \mD_t) \coloneqq m(\vx; \mD_t) + \beta_t \hsigma(\vx; \mX_t),
\end{equation}
where $t$ denotes the number of queried points, and $\beta_t > 0$ is a tuning parameter that varies with $t$.

\begin{figure}[!htb]
    \centering
    \includegraphics[width=.9\textwidth]{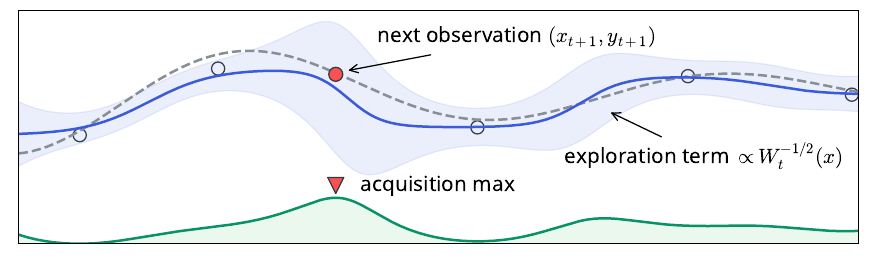}
    \caption{
        Illustration of the IKR-UCB acquisition function. 
        The gray dashed line represents the analytical function, while the blue curve shows the KR approximation of the objective function. 
        The upper light blue shaded area indicates the uncertainty quantification based on kernel density estimation. 
        The lower shaded plot visualizes the acquisition function.
    }
    \label{fig:acquisition}
\end{figure}

In \cref{fig:acquisition}, we conduct a simple test for optimizing the scalar function defined in \cref{eqn:test}, using the same settings described therein. 
The sixth query point is chosen by maximizing the IKR-UCB acquisition function. 
As shown, the acquisition function attains high values in regions where the kernel regression predicts a high surrogate mean (exploitation) and where the predictive uncertainty is substantial (exploration).

With the proposed IKR-UCB, the global optimization algorithm can be directly formulated. 
Starting with an initial design, the next query point is determined by maximizing the IKR-UCB acquisition function in \eqref{eqn:IKR-UCB}. 
This iterative process continues until a predefined iteration tolerance $T$ is reached. 
The complete BOKE algorithm is detailed in \cref{alg:BOKE}.

\begin{algorithm}[!htb]
    \caption{BOKE algorithm}
    \label{alg:BOKE}
    \KwIn{initial dataset $\mD_{T_0}$, budget $T$, kernel $k$, parameters $\set{\beta_i}_{i=T_0}^{T-1}$, $\varrho > 0$.}
    \For{$t = T_0, \dots, T - 1$}{
        Compute $m_t$ and $\hsigma_t$ using \cref{eqn:KR,eqn:DE}. \;
        Select $\vx_{t+1}$ by solving
        \begin{equation}
            \label{eqn:max IKR-UCB}
              \vx_{t+1} \in \argmax_{\vx \in \fX}  a_t(\vx).
        \end{equation} \;
        \vspace*{-1em}
        Evaluate $f$ at $\vx_{t+1}$ to obtain $y_{t+1}$ and update $\mD_{t+1} \gets \mD_{t} \cup \set{(\vx_{t+1}, y_{t+1})}$. \;
    }
\end{algorithm}

While UCB-style acquisition functions theoretically balance exploration and exploitation, their reliance on an increasing $\beta_t$ \cite{auer2002finite,srinivas2010gaussian} often leads to over-exploration in practice \cite{de2021greed}.
This empirical drawback is exacerbated in our setting: because the inherent smoothing bias of KR limits its ability to sharply resolve local optima compared to standard GPs, IKR-UCB suffers from a diminished exploitative capacity (see \cref{sec:prediction error bound}).
To improve its performance, we consider the pure-exploitation acquisition function defined in \eqref{eqn:max KR}:
\begin{equation}
    \label{eqn:max KR}
    \vx_{t+1} \in \argmax_{\vx \in \fX} m_t(\vx).
\end{equation}
By executing IKR-UCB with probability $q \in (0, 1]$ and \cref{eqn:max KR} with probability $1 - q$, we formulate a modified algorithm denoted as BOKE+ (summarized in \cref{alg:BOKE+}). This $\epsilon$-greedy scheme is consistent with recent BO literature \cite{de2021greed}, which shows that occasional pure exploitation can accelerate empirical convergence.
The optimization results of BOKE and BOKE+ are tested on the toy problem defined in \cref{eqn:test}, with the corresponding results shown in \cref{fig:iteration}. Both methods begin with the same initial set of five points. Notably, BOKE+ focuses sampling on regions with high objective values, while BOKE maintains a more balanced trade-off between exploration and exploitation.

\begin{algorithm}[!htb]
    \caption{BOKE+ algorithm}
    \label{alg:BOKE+}
    \KwIn{initial dataset $\mD_{T_0}$, budget $T$, kernel $k$, parameters $\set{\beta_i}_{i=T_0}^{T-1}$, $\varrho > 0$, and $q \in (0, 1]$.}
    \For{$t = T_0, \dots, T-1$}{
        Compute $m_t$ and $\hsigma_t$ by \cref{eqn:KR,eqn:DE}. \;
        Sample $U \sim \mathrm{Uniform}(0, 1)$. \;
        Select $\vx_{t+1}$ by solving \cref{eqn:max IKR-UCB} if $U < q$, and by solving \cref{eqn:max KR} otherwise. \;
        Evaluate $f$ at $\vx_{t+1}$ to obtain $y_{t+1}$ and update $\mD_{t+1} \gets \mD_{t} \cup \set{(\vx_{t+1}, y_{t+1})}$.
    }
\end{algorithm}

\begin{figure}[!htb]
    \centering    
    \includegraphics[width=.98\textwidth]{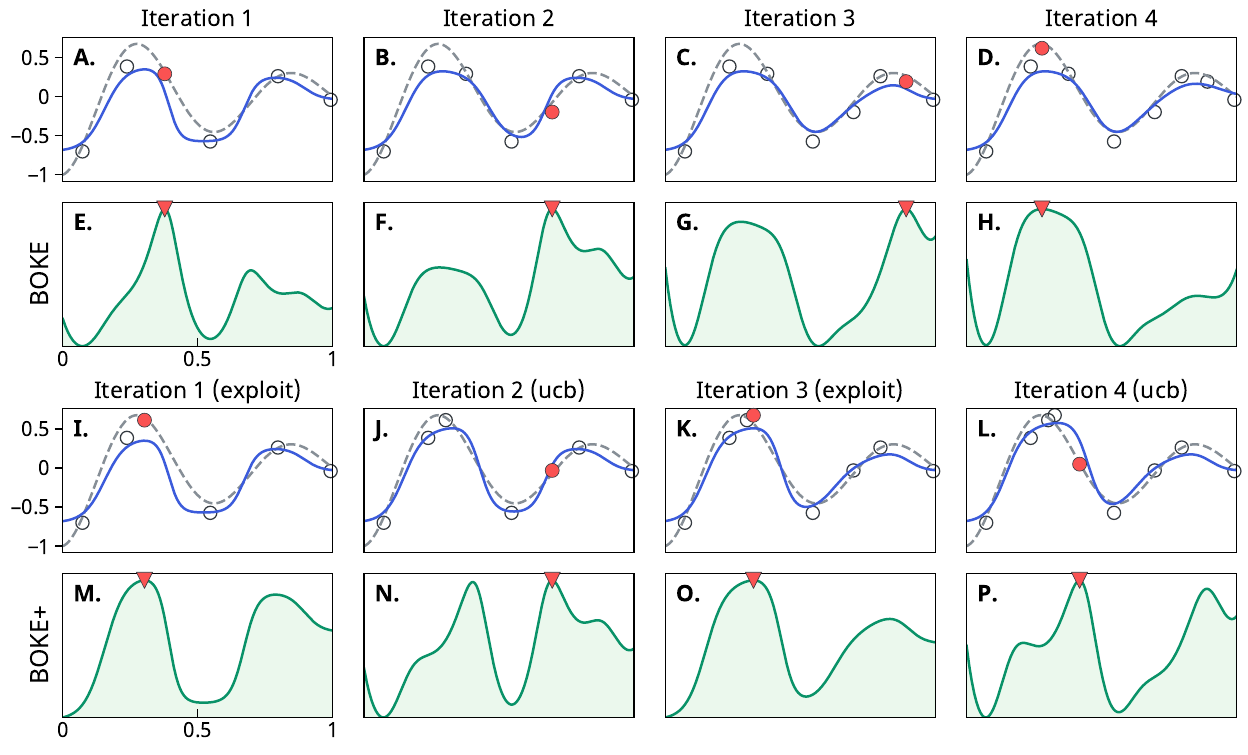}
    \caption{Comparison of BOKE (top two rows) and BOKE+ (bottom two rows) on the one-dimensional problem defined in \cref{eqn:test}. The gray dashed line represents the analytical function, while the blue curve shows the KR approximation of the objective function. The shaded plot visualizes the acquisition function. The red triangles indicate the maximizers of the acquisition functions, which determine the subsequent query points (red circles) evaluated on the noisy objective.
    }
    \label{fig:iteration}
\end{figure}

\subsection{Computational complexity}
\label{sec:computational complexity}

This section analyzes the computational time complexity of the proposed BOKE framework.
Since the modified BOKE+ algorithm shares the same asymptotic complexity, our analysis focuses on the base BOKE procedure.
We first compare BOKE against the UCB-based baselines GP-UCB and KR-UCB (see \cref{app:extended background}) before discussing scalable GP-BO variants.
At the $t$-th iteration, the computational cost of BO comprises an iteration-level \textit{fitting} phase and a per-query \textit{prediction} phase. 
The fitting phase encompasses updating the surrogate model and performing any necessary preprocessing steps, whereas the prediction phase evaluates the acquisition function at a candidate query point.

Focusing first on exact methods, GP-UCB requires computing a Cholesky decomposition for fitting (an $\bigO(t^3)$ cost) and solving a triangular system for prediction (an $\bigO(t^2)$ cost per query; see \cref{app:GP-UCB}).
KR-UCB incurs an $\bigO(t^2)$ fitting cost to identify the historically most optimistic arm (see \cref{app:KR-UCB}) and requires $\bigO(t)$ operations per query for prediction.
In contrast, BOKE sidesteps explicit model training, yielding an $\bigO(1)$ fitting complexity, while its prediction cost remains strictly $\bigO(t)$ per query due to the explicit weighted averaging over historical observations.

To bypass the $\bigO(t^3)$ bottleneck, scalable GP-BO variants employ either global (e.g., inducing points \cite{titsias2009variational,hensman2015mcmc}, Fourier features \cite{mutny2018efficient}, Vecchia approximations \cite{jimenez2023scalable}) or local (e.g., aggregating local experts \cite{tautvaivsas2024scalable}) approximations.
Assuming a bounded number of inducing points or a bounded local expert size, these paradigms reduce the fitting complexity to $\bigO(t)$, while the prediction complexity per query is reduced to either $\bigO(1)$ or $\bigO(t)$, depending on the specific approximation scheme.
However, these approximations incur additional constant-factor overheads and rely on structural approximations to the GP posterior, whether through global distillation or through partitioning and aggregation, that can compromise the calibration of predictive uncertainty or introduce other limitations in model capability \cite{liu2020gaussian}.
BOKE avoids these structural approximations entirely. 
Furthermore, due to the spatial locality of the kernel, BOKE can be directly accelerated using tree-based data structures, as discussed in \cref{sec:conclusions}.

Let $T$ denote the total iteration budget and $\Nacq$ the number of acquisition function evaluations per iteration.
Summing over $T$ iterations, the overall time complexities for exact GP-UCB and its global and local approximate variants are $\bigO(T^4 + \Nacq T^3)$, $\bigO(T^2 + \Nacq T)$, and $\bigO(\Nacq T^2)$, respectively.
The overall time complexity of KR-UCB is $\bigO(T^3 + \Nacq T^2)$.
BOKE achieves $\bigO(\Nacq T^2)$, scaling significantly better than the exact baselines while matching the efficiency of sparse GP-UCB variants.
The computational time complexities of all aforementioned methods are summarized in \cref{tab:computational complexity}.

\begin{table}[!htb]
    \caption{
        Computational time complexity of UCB-based BO algorithms.
        Here, $t$ denotes the current iteration, $T$ the total budget, and $\Nacq$ the number of candidate evaluations per iteration.
        For the approximate GP-UCB variants, we assume a bounded number of inducing points or a bounded local expert size.
    }
    \label{tab:computational complexity}
    \centering
    \begin{tabular}{lcccc}
        \toprule
        \textbf{Algorithm} & \textbf{Fitting} & \textbf{Prediction} & \textbf{Per Iteration} & \textbf{Overall} \\
        \midrule
        GP-UCB & $\bigO(t^3)$ & $\bigO(t^2)$ & $\bigO(t^3 + \Nacq t^2)$ & $\bigO(T^4 + \Nacq T^3)$ \\
        GP-UCB (global approx.) & $\bigO(t)$ & $\bigO(1)$ & $\bigO(t + \Nacq)$ & $\bigO(T^2 + \Nacq T)$ \\
        GP-UCB (local approx.) & $\bigO(t)$ & $\bigO(t)$ & $\bigO(\Nacq t)$ & $\bigO(\Nacq T^2)$ \\
        KR-UCB & $\bigO(t^2)$ & $\bigO(t)$ & $\bigO(t^2 + \Nacq t)$ & $\bigO(T^3 + \Nacq T^2)$ \\
        BOKE (ours) & $\bigO(1)$ & $\bigO(t)$ & $\bigO(\Nacq t)$ & $\bigO(\Nacq T^2)$ \\
        \bottomrule
    \end{tabular}
\end{table}

\section{Convergence analysis}
\label{sec:convergence analysis}

To obtain our main results, we impose the following assumptions.
\begin{assumption}
    \label{asm:domain}
    The decision set $\fX \subset \RR^d$ is a compact, convex set with non-empty interior.
\end{assumption}

\begin{assumption}
    \label{asm:function}
    The objective function $f : \fX \to \RR$ is continuous.
\end{assumption}

\begin{assumption}
    \label{asm:kernel}
    The kernel takes the form $k(\vx, \vx') = \Psi\left( (\vx - \vx') / \ell \right)$ for all $\vx, \vx' \in \fX$, with bandwidth $\ell > 0$.
    Here, $\Psi : \RR^d \to [0, 1]$ satisfies $\Psi(\bm{0}) = 1$ and is compactly supported within a centered ball of radius $R_{\Psi} > 0$.
    Moreover, there exist constants $L_{\Psi}$, $\theta > 0$ such that $|\Psi(\vx) - \Psi(\vx')| \le L_{\Psi} \| \vx - \vx' \|^{\theta}$ for all $\vx, \vx' \in \RR^d$.
\end{assumption}

\begin{assumption}
    \label{asm:noise}
    The noise terms $\set{\varepsilon_t}_{t \ge 1}$ are independent sub-Gaussian random variables with zero mean and variance proxy $\varsigma^2 > 0$, such that $\EE \left[ \exp(\lambda \varepsilon_t) \right] \le \exp\left( \varsigma^2 \lambda^2 / 2 \right)$ for all $\lambda \in \RR$ and $t \ge 1$.
\end{assumption}

\begin{assumption}
    \label{asm:bandwidth}
    The bandwidth sequence $\set{\ell_t}_{t \ge 1}$ is polynomially lower bounded, i.e., there exist constants $A_{\Psi}, \alpha > 0$ such that $\ell_t \ge A_{\Psi} t^{-\alpha}$ for all $t \ge 1$.
\end{assumption}

In \cref{asm:domain}, we assume that $\fX$ is convex to simplify the analysis.
The results of this paper extend directly to any connected compact set $\fX$ satisfying the interior cone condition (see, e.g., \cite[Definition 3.6]{wendland2004scattered}).

The compact support and H\"older continuity in \cref{asm:kernel} are standard conditions ensuring a uniform bound on the KR prediction error \cite{linke2023towards}.
Both are natively satisfied by kernels such as the Epanechnikov and triangular kernels.
The Gaussian kernel, while requiring numerical truncation to achieve compact support in practice, inherently satisfies the H\"older continuity condition.

Additionally, \cref{asm:bandwidth} prevents the estimator from degenerating when the bandwidth decays too rapidly. 
The polynomial lower bound is compatible with common bandwidth selection rules, such as Scott's \cite{scott2015multivariate} and Silverman's \cite{silverman1998density} rule of thumb, which yield $\ell_t = \bigTheta( t^{-\frac{1}{d + 4}} )$.

\subsection{Prediction error bounds for KR}
\label{sec:prediction error bound}
To establish convergence guarantees in a frequentist setting (\cref{asm:function}), we derive non-asymptotic prediction error bounds for the KR estimator.
Specifically, we provide pointwise bounds to control the error at the fixed optimizer $\vx^*$, and uniform bounds to account for the allocation/recommendation sequence, which depends on the noise.
We first derive these bounds in terms of the kernel density (\cref{thm:pointwise error of KR by KDE,thm:uniform error of KR by KDE}), and then relax them into fill distance-based forms to facilitate the analysis of simple regret (\cref{thm:pointwise error of KR by fill distance,thm:uniform error of KR by fill distance}).

Under \cref{asm:kernel}, $W_t(\vx)$ may vanish when the bandwidth is small, yielding an indeterminate $0/0$ form.
To make the estimator globally well-defined, we extend it by setting $m_t(\vx) = m(\vx; \mD_t) \coloneqq (1 / t) \sum_{i=1}^{t} y_i$ whenever $W_t(\vx) = 0$.
This ensures $m_t(\vx)$ remains a convex combination of the observations, thus allowing the subsequent concentration analysis to apply uniformly across $\fX$.
\cref{lem:uniform error of extended KR} follows directly from Hoeffding's inequality (see, e.g., \cite[Proposition 2.5]{wainwright2019high}); its proof is given in \cref{app:proof of prediction error bound}.

\begin{lemma}
    \label{lem:uniform error of extended KR}
    Suppose \cref{asm:domain,asm:function,asm:kernel,asm:noise} hold. For any $t \ge 1$, $\delta \in (0, 1)$, and $\set{\vx_1, \dots, \vx_t} \subset \fX$, with probability at least $1 - \delta$,
    \begin{equation}\label{eqn:uniform error of extended KR}
        \left| f(\vx) - m_t(\vx) \right| \le 2 \| f \|_{\infty} + \sqrt{2 t^{-1} \varsigma^2 \ln(2 / \delta)},
    \end{equation}
    for all $\vx \in \fX$ such that $W_t(\vx) = 0$, where $\| f \|_{\infty} \coloneqq \sup_{\vx \in \fX} |f(\vx)| < \infty$.
\end{lemma}

\subsubsection{Error bounds in terms of the kernel density}
\label{sec:prediction error bound by KDE}

We first derive error bounds of KR that depend on the KDE at the query point $\vx$.
\cref{thm:pointwise error of KR by KDE} shows that the estimation error is controlled by $\hsigma_t(\vx)$ and the modulus of uniform continuity of $f$.

\begin{theorem}
    \label{thm:pointwise error of KR by KDE}
     Suppose \cref{asm:domain,asm:function,asm:kernel,asm:noise} hold. 
    There exists a constant $C_0 > 0$ depending only on $\fX$, $\Psi$, $f$, $\varrho$ and $\varsigma^2$.
    For any $t \ge 1$, $\delta \in (0, 1)$, and $\set{\vx_1, \dots, \vx_t} \subset \fX$, at any fixed $\vx \in \fX$ such that $W_t(\vx) > 0$, with probability at least $1 - \delta$,
    \begin{equation}\label{eqn:pointwise error of KR by KDE}
        \left| f(\vx) - m_t(\vx) \right| \le C_0 \left( \hsigma_t(\vx) \sqrt{\ln(2 / \delta)} + \omega_f(\ell_t) \right),
    \end{equation}
    where $\omega_f(z) \coloneqq \sup_{\substack{\vx, \vx' \in \fX \\ \| \vx - \vx' \| \le z}} |f(\vx) - f(\vx')|$ is the modulus of uniform continuity.
\end{theorem}

\begin{proof}
    Decompose the prediction error into bias and random error:
    \[
        f(\vx) - m_t(\vx) = \underbrace{f(\vx) - \frac{\sum_{i=1}^{t} k(\vx, \vx_i) f(\vx_i)}{\sum_{i=1}^{t} k(\vx, \vx_i)}}_{\text{Bias}} - \underbrace{\frac{\sum_{i=1}^{t} k(\vx, \vx_i) \varepsilon_i}{\sum_{i=1}^{t} k(\vx, \vx_i)}}_{\text{Random Error}}.
    \]
    Under \cref{asm:kernel}, the bias term satisfies
    \[
        \left| f(\vx) - \frac{\sum_{i=1}^{t} k(\vx, \vx_i) f(\vx_i)}{\sum_{i=1}^{t} k(\vx, \vx_i)} \right|
        = \left| \frac{\sum_{i=1}^{t} k(\vx, \vx_i) ( f(\vx) - f(\vx_i) )}{\sum_{i=1}^{t} k(\vx, \vx_i)} \right|
        \le \omega_f(R_{\Psi} \ell_t).
    \]
    By \cref{asm:domain,asm:function}, there exist $\vz', \vz'' \in \fX$ such that $\omega_f(R_{\Psi} \ell_t) = \left| f(\vz') - f(\vz'') \right|$. 
    Let $N \coloneqq \lceil R_{\Psi} \rceil + 1$.
    Since $\fX$ is convex, there exists a sequence $\set{\vz_0 = \vz', \vz_1, \dots, \vz_N = \vz''} \subset \fX$ such that $\| \vz_j - \vz_{j-1} \| \le \ell_t$ for all $1 \le j \le N$.
    Thus,
    \[
        \omega_f(R_{\Psi} \ell_t)
        = \left| \sum_{j=1}^{N} ( f(\vz_j) - f(\vz_{j-1}) ) \right|
        \le \sum_{j=1}^{N} \left| f(\vz_j) - f(\vz_{j-1}) \right|
        \le N \omega_f(\ell_t).
    \]
    For the random error, Hoeffding's inequality implies that for any $\lambda \ge 0$,
    \[
        \PP \left[ \left| \frac{\sum_{i=1}^{t} k(\vx, \vx_i) \varepsilon_i}{\sum_{i=1}^{t} k(\vx, \vx_i)} \right| \ge \lambda \right]
        \le 2 \exp \left( - \frac{\lambda^2 W_t^2(\vx)}{2 \varsigma^2 \sum_{i=1}^{t} k^2(\vx, \vx_i)} \right)
        \le 2 \exp \left( - \frac{\lambda^2}{2 \varsigma^2} \frac{W_t(\vx) + \varrho}{1 + \varrho} \right),
    \]
    where the last inequality follows from $0 \le k(\vx, \vx_i) \le 1$, which ensures
    \[
        \frac{\left( \sum_{i=1}^{t} k(\vx, \vx_i) \right)^2}{\sum_{i=1}^{t} k^2(\vx, \vx_i)}
        \ge \max\set{1, \sum_{i=1}^{t} k(\vx, \vx_i)}
        \ge \frac{\sum_{i=1}^{t} k(\vx, \vx_i) + \varrho}{1 + \varrho}.
    \]
    Setting $\lambda = \hsigma_t(\vx) \sqrt{2 \varsigma^2 (1 + \varrho) \ln(2 / \delta)}$, we obtain that with probability at least $1 - \delta$,
    \[
        |f(\vx) - m_t(\vx)| \le \left( \lceil R_{\Psi} \rceil + 1 \right) \omega_f(\ell_t) + \hsigma_t(\vx) \sqrt{2 \varsigma^2 (1 + \varrho) \ln(2 / \delta)}.
    \]
    The proof is completed by letting $C_0 = \max\set{\lceil R_{\Psi} \rceil + 1, \sqrt{2 \varsigma^2 (1 + \varrho)}}$.
\end{proof}

To control the error for the noise-dependent query sequence, we extend the preceding pointwise result to a uniform bound over $\fX$.

\begin{theorem}
    \label{thm:uniform error of KR by KDE}
    Suppose \cref{asm:domain,asm:function,asm:kernel,asm:noise,asm:bandwidth} hold.
    There exists a constant $C_0 > 0$ depending only on $\fX$, $\Psi$, $f$, $\varrho$, and $\varsigma^2$.
    For any $t \ge 1$, $\delta \in (0, 1)$, $\ell_t \le \diam(\fX)$, and $\set{\vx_1, \dots, \vx_t} \subset \fX$, with probability at least $1 - \delta$,
    \begin{equation}\label{eqn:uniform error of KR by KDE}
        \left| f(\vx) - m_t(\vx) \right| \le C_0 \left( \left( \hsigma_t(\vx) + t^{-1} W^{-1}_t(\vx) \right) \sqrt{d \ln(t + 1) + \ln(4 / \delta)} + \omega_f(\ell_t) \right),
    \end{equation}
    for all $\vx \in \fX$ such that $W_t(\vx) > 0$.
\end{theorem}

\begin{proof}
    For brevity, we denote $\nu_t(\vx) \coloneqq \sum_{i=1}^{t} k(\vx, \vx_i) \varepsilon_i$.
    Similar to \cref{thm:pointwise error of KR by KDE}, it suffices to bound the random error term $|\nu_t(\vx)| / W_t(\vx)$.
    For any $\delta \in (0, 1)$, applying the union bound to $\PP\left[ |\varepsilon_i| \ge \sqrt{2 \varsigma^2 \ln (4 t / \delta)} \right] \le \delta / (2 t)$ for $i \in \set{1, \dots, t}$ yields
    \[
        \PP\left[ \max_{1 \le i \le t} |\varepsilon_i| \le \sqrt{2 \varsigma^2 \ln \frac{4 t}{\delta}} \right] \ge 1 - \frac{\delta}{2}.
    \]
    Denote this event by $\fE_1$.
    Let $D_{\fX} \coloneqq \diam(\fX)$.
    For any $\epsilon \in (0, D_{\fX}]$, there exists an $\epsilon$-net of $\fX$, denoted by $\fC_t$, such that $N_t \coloneqq \card{\fC_t} \le (2 D_{\fX} / \epsilon + 1)^d \le (3 D_{\fX} / \epsilon)^d$ (see, e.g., \cite[Proposition 4.2.10]{vershynin2026high}).
    Applying the union bound on $\fC_t$, we obtain
    \[
        \PP\left[ \frac{|\nu_t(\vx')|}{W_t(\vx')} \le \hsigma_t(\vx') \sqrt{2 \varsigma^2 (1 + \varrho) \ln \frac{4 N_t}{\delta}}, \quad \forall \vx' \in \fC_t \right] \ge 1 - \frac{\delta}{2}.
    \]
    Denote this event by $\fE_2$.
    Conditioned on $\fE_1 \cap \fE_2$, which holds with probability at least $1 - \delta$,
    \[
        \begin{aligned}
            |W_t(\vx) - W_t([\vx]_t)| 
            &\le \sum_{i=1}^{t} |k(\vx, \vx_i) - k([\vx]_t, \vx_i)| \\
            &\le t L_{\Psi} \ell_t^{-\theta} \| \vx - [\vx]_t \|^{\theta}
            \le t L_{\Psi} \ell_t^{-\theta} \epsilon^{\theta} \eqqcolon \Delta_W,
        \end{aligned}
    \]
    where $[\vx]_t \in \argmin_{\vx' \in \fC_t} \| \vx - \vx' \|$ is the closest point in $\fC_t$ to $\vx$, and the second inequality follows from the H\"older continuity of $\Psi$.
    Then,
    \[
        \begin{aligned}
            |\nu_t(\vx) - \nu_t([\vx]_t)| 
            &\le \sum_{i=1}^{t} |\varepsilon_i| \cdot |k(\vx, \vx_i) - k([\vx]_t, \vx_i)| \\
            &\le \left( \max_{1 \le i \le t} |\varepsilon_i| \right) \Delta_W
            \le \sqrt{2 \varsigma^2 \ln \frac{4 t}{\delta}} \Delta_W \eqqcolon \Delta_Z.
        \end{aligned}
    \]
    Let $g(z) = \frac{z}{\sqrt{z + \varrho}}$, for any $z \ge 0$,
    \[
        |g'(z)| 
        = \left| \frac{\sqrt{z + \varrho} - \frac{z}{2 \sqrt{z + \varrho}}}{z + \varrho} \right|
        = \frac{z + 2 \varrho}{2 (z + \varrho)^{3/2}}
        \le \frac{1}{\sqrt{z + \varrho}}
        \le \frac{1}{\sqrt{\varrho}},
    \]
    and for any $\vx' \in \fC_t$,
    \[
        |\nu_t(\vx')| \le g( W_t(\vx') ) \sqrt{2 \varsigma^2 (1 + \varrho) \ln \frac{4 N_t}{\delta}}.
    \]
    Therefore,
    \[
        \begin{aligned}
            |\nu_t(\vx)| 
            &\le |\nu_t([\vx]_t)| + \Delta_Z \\
            &\le g( W_t([\vx]_t) ) \sqrt{2 \varsigma^2 (1 + \varrho) \ln \frac{4 N_t}{\delta}} + \Delta_Z \\
            &\le \left( g( W_t(\vx) ) + \frac{\Delta_W}{\sqrt{\varrho}} \right) \sqrt{2 \varsigma^2 (1 + \varrho) \ln \frac{4 N_t}{\delta}} + \Delta_Z.
        \end{aligned}
    \]
    Divided by $W_t(\vx)$ yields
    \[
        \begin{aligned}
            \frac{|\nu_t(\vx)|}{W_t(\vx)}
            &\le \hsigma_t(\vx) \sqrt{2 \varsigma^2 (1 + \varrho) \ln \frac{4 N_t}{\delta}} + \frac{\Delta_W}{W_t(\vx)} \left( \sqrt{2 \varsigma^2 (1 + \varrho^{-1}) \ln \frac{4 N_t}{\delta}} + \sqrt{2 \varsigma^2 \ln \frac{4 t}{\delta}} \right) \\
            &\le \hsigma_t(\vx) \sqrt{2 \varsigma^2 (1 + \varrho) \ln \frac{4 N_t}{\delta}} + \frac{\Delta_W}{W_t(\vx)} \sqrt{2 \varsigma^2 (1 + \varrho^{-1})} \left( \sqrt{\ln \frac{4 N_t}{\delta}} + \sqrt{\ln \frac{4 t}{\delta}} \right) \\
            &\le \hsigma_t(\vx) \sqrt{2 \varsigma^2 (1 + \varrho) \ln \frac{4 N_t}{\delta}} + \frac{\Delta_W}{W_t(\vx)} \sqrt{4 \varsigma^2 (1 + \varrho^{-1}) \left( \ln (t N_t) + 2 \ln (4 / \delta) \right)},
        \end{aligned}
    \]
    where the second inequality is given by $\varrho > 0$, and the last inequality uses the Cauchy--Schwarz inequality.
    Set $\epsilon = \ell_t t^{-2/\theta} \le D_{\fX}$, we have $\Delta_W = t^{-1} L_{\Psi}$ and $\ln N_t \le d \ln(3 D_{\fX} / \epsilon) = d \ln \left( 3 D_{\fX} \ell_t^{-1} t^{2/\theta} \right)$.
    Since $\ell_t^{-1} \le A_{\Psi}^{-1} t^{\alpha}$,
    \[
        \begin{aligned}
            \ln N_t 
            &\le d \ln \left( 3 D_{\fX} A_{\Psi}^{-1} t^{\alpha + 2/\theta} \right)
            = d \ln \left( 3 D_{\fX} A_{\Psi}^{-1} \right) + \left( \alpha + \frac{2}{\theta} \right) d \ln t \\
            &\le d \frac{3 D_{\fX} A_{\Psi}^{-1}}{\ln 2} \ln (t + 1) + \left( \alpha + \frac{2}{\theta} \right) d \ln (t + 1) \\
            &= \left( \frac{3 D_{\fX} A_{\Psi}^{-1}}{\ln 2} + \alpha + \frac{2}{\theta} \right) d \ln(t + 1) 
            \eqqcolon A_0 d \ln(t + 1).
        \end{aligned}
    \]
    where $A_0 > 0$ is a universal constant. 
    The last inequality uses $\ln z < z$ for $z > 0$ and $\ln(t + 1) \ge \ln 2$.
    Hence,
    \[
        \begin{aligned}
            \frac{|\nu_t(\vx)|}{W_t(\vx)}
            &\le \hsigma_t(\vx) \sqrt{2 \varsigma^2 (1 + \varrho) \left( A_0 d \ln(t + 1) + \ln(4 / \delta) \right)} \\
            &\phantom{=} + W_t^{-1}(\vx) \cdot \frac{L_{\Psi}}{t} \sqrt{4 \varsigma^2 (1 + \varrho^{-1}) \left( (A_0 + 1) d \ln(t + 1) + 2 \ln(4 / \delta) \right)} \\
            &\le A_1 \hsigma_t(\vx) \sqrt{d \ln(t + 1) + \ln(4 / \delta)} + A_2 W^{-1}_t(\vx) t^{-1} \sqrt{d \ln(t + 1) + \ln(4 / \delta)},
        \end{aligned}
    \]
    where $A_1 = \sqrt{2 \varsigma^2 (1 + \varrho) (A_0 + 1)}$, $A_2 = L_{\Psi} \sqrt{4 \varsigma^2 (1 + \varrho^{-1}) (A_0 + 2)}$.
    The proof is completed by setting $C_0 = \max\set{\lceil R_{\Psi} \rceil + 1, A_1, A_2}$, where $\lceil R_{\Psi} \rceil + 1$ is specified for the bias term.
\end{proof}

\subsubsection{Error bounds in terms of the fill distance}
\label{sec:prediction error bound by fill distance}

While the KDE-based bounds in \cref{thm:pointwise error of KR by KDE,thm:uniform error of KR by KDE} are sharp, they are intrinsically position-dependent.
To establish explicit convergence rates for the regret, it is necessary to translate these results into bounds based on the fill distance.
This transition is facilitated by \cref{lem:bound KDE by fill distance}, which provides a lower bound for the KDE in terms of the fill distance.

\begin{lemma}
    \label{lem:bound KDE by fill distance}
    Suppose \cref{asm:domain,asm:kernel} hold. 
    For any $\ellmax > 0$, there exist constants $\filldist_0$, $w_0 > 0$ depending only on $\fX$, $\Psi$, and $\ellmax$.
    For any $t \ge 1$, $\ell_t \le \ellmax$, and $\mX_t = \set{\vx_1, \dots, \vx_t} \subset \fX$.
    If $\filldist_{\fX, \mX_t} \le \filldist_0 \ell_t$, then for all $\vx \in \fX$,
    \begin{equation}
        W_t(\vx) \ge w_0 \ell_t^d \filldist_{\fX, \mX_t}^{-d}.
    \end{equation}
\end{lemma}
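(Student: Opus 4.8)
The plan is to reduce the claimed bound on $W_t$ to a counting estimate for the number of sample points near $\vx$. Since $\Psi$ is continuous at the origin with $\Psi(\bm 0) > 0$ (Assumption~\ref{asm:kernel}), I would first fix a radius $\rho > 0$ small enough that $\Psi(\vz) \ge \tfrac12 \Psi(\bm 0)$ whenever $\|\vz\| \le \rho$ and, in addition, $\rho\, \ell_0 \le \diam(\fX)$; the latter only shrinks $\rho$ further and uses $\diam(\fX) < \infty$ from Assumption~\ref{asm:domain}. Then $k(\vx,\vx_i) = \Psi\!\big(\tfrac{\vx - \vx_i}{\ell}\big) \ge \tfrac12 \Psi(\bm 0)$ for every index $i$ with $\|\vx - \vx_i\| \le \rho\ell$, so that
\[
    W_t(\vx) \ \ge\ \tfrac12 \Psi(\bm 0)\, N_t(\vx),
    \qquad
    N_t(\vx) \coloneqq \#\{\, i \le t : \|\vx - \vx_i\| \le \rho\ell \,\},
\]
and it remains to show that $N_t(\vx)$ is bounded below by a constant multiple of $\ell^d\, \filldist_{\fX,\vX_t}^{-d}$.

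For this counting bound I would run a volume comparison. Write $h \coloneqq \filldist_{\fX,\vX_t}$ and choose $\filldist_0 = \rho/2$, so that the hypothesis $h \le \filldist_0\, \ell$ forces $\rho\ell - h \ge \tfrac12 \rho\ell > 0$ while also $\rho\ell - h \le \rho\ell \le \rho\,\ell_0 \le \diam(\fX)$. Since $\vX_t$ is finite, the definition of fill distance gives the covering $\fX \subseteq \bigcup_{i=1}^{t} \overline{B}(\vx_i, h)$ by closed balls; intersecting it with $B(\vx, \rho\ell - h)$ and using subadditivity of volume,
\[
    \vol\!\big(B(\vx, \rho\ell - h) \cap \fX\big)
    \ \le\ \sum_{i=1}^{t} \vol\!\big(B(\vx, \rho\ell - h) \cap \overline{B}(\vx_i, h)\big)
    \ \le\ N_t(\vx)\, \vol\!\big(B(\bm 0, 1)\big)\, h^d,
\]
because a summand vanishes unless the two balls meet — which forces $\|\vx - \vx_i\| < \rho\ell$ — and every nonvanishing summand is at most $\vol(\overline{B}(\vx_i,h)) = \vol(B(\bm 0,1))\, h^d$. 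On the other hand $\rho\ell - h \in (0, \diam(\fX)]$, so Lemma~\ref{lem:lower bound volume} applies and gives $\vol(B(\vx, \rho\ell - h) \cap \fX) \ge v_0 (\rho\ell - h)^d \ge v_0 (\rho\ell/2)^d$. Combining the two displays,
\[
    N_t(\vx) \ \ge\ \frac{v_0\, (\rho/2)^d}{\vol(B(\bm 0,1))}\, \frac{\ell^d}{h^d},
\]
and hence $W_t(\vx) \ge w_0\, \ell^d\, \filldist_{\fX,\vX_t}^{-d}$ with $w_0 = \dfrac{\Psi(\bm 0)\, v_0\, (\rho/2)^d}{2\, \vol(B(\bm 0,1))}$ and $\filldist_0 = \rho/2$, both depending only on $\fX$, $\Psi$, and $\ell_0$, as required.

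The step I expect to need the most care is the geometric volume comparison: one must keep the radius $\rho\ell - h$ \emph{uniformly} inside the range $(0, \diam(\fX)]$ on which Lemma~\ref{lem:lower bound volume} is valid — which is precisely what the hypothesis $\filldist_{\fX,\vX_t} \le \filldist_0\, \ell$ together with the normalization $\rho\,\ell_0 \le \diam(\fX)$ secures — and one must verify that the closed balls $\overline{B}(\vx_i, h)$ contributing a nonzero intersection with $B(\vx, \rho\ell - h)$ are exactly those with $\|\vx - \vx_i\| < \rho\ell$, so that the resulting count is genuinely a lower bound for $N_t(\vx)$. Finally, the degenerate case $\filldist_{\fX,\vX_t} = 0$ does not occur, since a finite point set cannot have zero fill distance over a set with non-empty interior (Assumption~\ref{asm:domain}), so the divisions above are legitimate.
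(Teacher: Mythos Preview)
Your proposal is correct and follows essentially the same route as the paper: reduce $W_t(\vx)$ to a count of nearby sample points via continuity of $\Psi$ at the origin, then bound that count from below by a volume comparison using Lemma~\ref{lem:lower bound volume} and the $\filldist_{\fX,\vX_t}$-covering of $\fX$ by closed balls around the $\vx_i$. The only cosmetic difference is that you work with the inner radius $\rho\ell - h$ whereas the paper fixes it at $\epsilon\ell/2$ up front; your additional care about the range $(0,\diam(\fX)]$ and the nondegeneracy of $h$ is warranted and handled correctly.
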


Substituting this lower bound into the KDE-based result of \cref{thm:pointwise error of KR by KDE} yields the pointwise prediction error in terms of the fill distance.

\begin{theorem}
    \label{thm:pointwise error of KR by fill distance}
    Suppose \cref{asm:domain,asm:function,asm:kernel,asm:noise} hold. 
    For any $\ellmax > 0$, there exist constants $C_0$, $\filldist_0 > 0$ depending only on $\fX$, $\Psi$, $f$, $\varrho$, $\varsigma^2$, and $\ellmax$.
    For any $t \ge 1$, $\delta \in (0, 1)$, $\ell_t \le \ellmax$, and $\mX_t = \set{\vx_1, \dots, \vx_t} \subset \fX$.
    If $\filldist_{\fX, \mX_t} \le \filldist_0 \ell_t$, then at any fixed $\vx \in \fX$, with probability at least $1 - \delta$,
    \begin{equation}\label{eqn:pointwise error of KR by fill distance}
        \left| f(\vx) - m_t(\vx) \right| \le C_0 \left( \ell_t^{-d/2} \filldist_{\fX, \mX_t}^{d/2} \sqrt{\ln(2 / \delta)} + \omega_f(\ell_t) \right).
    \end{equation}
\end{theorem}

\begin{proof}
    Under these conditions, \cref{lem:bound KDE by fill distance} implies that $W_t(\vx)$ is bounded away from zero.
    Since $\varrho > 0$, $\hsigma_t(\vx) \le W_t^{-1/2}(\vx) \le w_0^{-1/2} \ell_t^{-d/2} \filldist_{\fX, \mX_t}^{d/2}$.
    By \cref{thm:pointwise error of KR by KDE}, there exists $A_0 > 0$ such that for any $\delta \in (0, 1)$, with probability at least $1 - \delta$,
    \[
        \begin{aligned}
            \left| f(\vx) - m_t(\vx) \right| 
            &\le A_0 \left( \hsigma_t(\vx) \sqrt{\ln(2 / \delta)} + \omega_f(\ell_t) \right) \\
            &\le A_0 \left( w_0^{-1/2} \ell_t^{-d/2} \filldist_{\fX, \mX_t}^{d/2} \sqrt{\ln(2 / \delta)} + \omega_f(\ell_t) \right).
        \end{aligned}
    \]
    The proof is completed by setting $C_0 = A_0 \max\set{w_0^{-1/2}, 1}$.
\end{proof}

Following the same rationale as in the previous section, we extend this geometric pointwise guarantee to a uniform bound over $\fX$ to accommodate the noise-dependent query sequence.

\begin{theorem}
    \label{thm:uniform error of KR by fill distance}
    Suppose \cref{asm:domain,asm:function,asm:kernel,asm:noise,asm:bandwidth} hold. 
    For any $\ellmax \in (0, \diam(\fX)]$, there exist constants $C_0$, $\filldist_0 > 0$ depending only on $\fX$, $\Psi$, $f$, $\varrho$, $\varsigma^2$, and $\ellmax$.
    For any $t \ge 1$, $\delta \in (0, 1)$, $\ell_t \le \ellmax$, and $\mX_t = \set{\vx_1, \dots, \vx_t} \subset \fX$.
    If $\filldist_{\fX, \mX_t} \le \filldist_0 \ell_t$, then with probability at least $1 - \delta$,
    \begin{equation}\label{eqn:uniform error of KR by fill distance}
        \left| f(\vx) - m_t(\vx) \right| \le C_0 \left( \ell_t^{-d/2} \filldist_{\fX, \mX_t}^{d/2} \sqrt{d \ln(t + 1) + \ln(4 / \delta)} + \omega_f(\ell_t) \right).
    \end{equation}
\end{theorem}

\begin{proof}
    Under these conditions, \cref{lem:bound KDE by fill distance} implies that $W_t(\vx)$ is bounded away from zero.
    Since $\ell_t \le \ellmax \le \diam(\fX)$, by \cref{thm:uniform error of KR by KDE}, there exists $A_0 > 0$ such that for any $\delta \in (0, 1)$, with probability at least $1 - \delta$,
    \[
        \begin{aligned}
            \left| f(\vx) - m_t(\vx) \right|
            \le& A_0 \left( \left( \hsigma_t(\vx) + t^{-1} W_t^{-1}(\vx) \right) \cdot \sqrt{d \ln(t + 1) + \ln(4 / \delta)} + \omega_f(\ell_t) \right) \\
            \le& A_0 \left( W_t^{-1/2}(\vx) \left( 1 + W_t^{-1/2}(\vx) \right) \cdot \sqrt{d \ln(t + 1) + \ln(4 / \delta)} + \omega_f(\ell_t) \right) \\
            \le& A_0 \left( w_0^{-\frac{1}{2}} \left( 1 + w_0^{-\frac{1}{2}} \filldist_0^{\frac{d}{2}} \right) \cdot \ell_t^{-\frac{d}{2}} \filldist_{\fX, \mX_t}^{\frac{d}{2}} \sqrt{d \ln(t + 1) + \ln(4 / \delta)} + \omega_f(\ell_t) \right),
        \end{aligned}
    \]
    where the last inequality follows from $W_t^{-1/2}(\vx) \le w_0^{-1/2} \ell_t^{-d/2} \filldist_{\fX, \mX_t}^{d/2} \le w_0^{-1/2} \filldist_0^{d/2}$.
    The proof is completed by setting $C_0 = A_0 \max\set{w_0^{-1/2} \left( 1 + w_0^{-1/2} \filldist_0^{d/2} \right), 1}$.
\end{proof}

\subsection{Algorithmic consistency}
\label{sec:algorithmic consistency}
For an allocation strategy such as BOKE or DE, we aim to analyze the denseness of the query sequence generated by the optimization algorithm, which is referred to as the \textit{algorithmic consistency} and is widely used to establish the convergence of global optimization in noise-free settings (see, e.g., \cite[Theorem 1.3]{torn1989global}).
The absence of algorithmic consistency may lead to the risk of being trapped in local optima, as the algorithm's iterates might not sufficiently explore the decision set. 
We adopt the \textit{sequential no-empty-ball} (SNEB) property (see \cite[Assumption 3.1]{chen2023pseudo}) to establish algorithmic consistency for our proposed algorithms.

In this section, we extend the SNEB property and algorithmic consistency to the noisy setting. First, we present the SNEB property of IKR-UCB in \cref{lem:SNEB IKR-UCB}. Then, the algorithmic consistency of BOKE and BOKE+ are established in \cref{thm:consistency BOKE} and \cref{cor:consistency BOKE+}, respectively. Additionally, the algorithmic consistency of the DE strategy, which leads to a pure exploration design, is provided in \cref{cor:consistency DE}.

To show the SNEB property of the IKR-UCB acquisition function, we consider its rescaled form instead:
\begin{equation}
    \tilde{a}_t(\vx) = \tilde{a}(\vx; \mD_t) \coloneqq \beta_t^{-1} m(\vx; \mD_t) + \hsigma(\vx; \mX_t).
\end{equation}
where $\beta_t > 0$ is a tuning parameter introduced in the definition of acquisition function. The SNEB property for IKR-UCB is presented in \cref{lem:SNEB IKR-UCB}, and its proof is provided in \cref{app:proof of algorithmic consistency}.

To analyze the SNEB property, we introduce an auxiliary dataset $\tilde{\mD}_t = \set{(\vx_1, y_1), \ldots} \subset \fX \times \RR$ and its projection onto the first coordinate, $\tilde{\mX}_t = \set{\vx_1, \ldots} \subset \fX$. Here, the notation denotes an arbitrary finite set; the subscript $t$ is merely an index and does not necessarily equal its cardinality.

\begin{lemma}[SNEB property of IKR-UCB]
    \label{lem:SNEB IKR-UCB}
    Suppose \cref{asm:domain,asm:function,asm:kernel,asm:noise,asm:bandwidth} hold. 
    For any fixed $\ell_t \equiv \ell$, and any sequence $\set{\beta_t}_{t \ge 1}$ with $\lim_{t \to \infty} \beta_t = \infty$, the following statements hold:
    \begin{enumerate}[label=(\alph*)]
        \item\label{it1:SNEB IKR-UCB} For any sequence $\set{\vx_t}_{t \ge 1} \subset \fX$, let $\mX_t = \set{\vx_1, \dots, \vx_t}$ and $\mD_t = \set{(\vx_i, y_i)}_{i=1}^{t}$. If there exists $\vx \in \fX$ with $\inf_{t} d(\vx, \mX_t) > R_{\Psi} \ell$, then $\tilde{a}(\vx; \mD_t) \pto \varrho^{-1/2}$ as $t \to \infty$.

        \item\label{it2:SNEB IKR-UCB} For any convergent sequence $\set{\vx_t}_{t \ge 1} \subset \fX$ and any sequence of finite datasets $\tilde{\mD}_t$ containing $\set{(\vx_i, y_i)}_{i=1}^{t}$. If $\beta_t = \bigOmega(\sqrt{\ln t})$, then $\tilde{a}(\vx_t; \tilde{\mD}_{t-1}) \pto 0$ as $t \to \infty$.
    \end{enumerate}
\end{lemma}

Based on \cref{lem:SNEB IKR-UCB}, we establish the algorithmic consistency of both BOKE and BOKE+ in the sense of almost sure convergence, as shown in \cref{thm:consistency BOKE} and \cref{cor:consistency BOKE+}.

\begin{theorem}[Algorithmic consistency of BOKE]
    \label{thm:consistency BOKE}
    Suppose \cref{asm:domain,asm:function,asm:kernel,asm:noise,asm:bandwidth} hold. 
    For any sequence $\set{\beta_t}_{t \ge 1}$ with $\beta_t = \bigOmega(\sqrt{\ln t})$, let $\mX_t = \set{\vx_1, \dots, \vx_t}$ denote the BOKE iterates and $\mD_t = \set{(\vx_i, y_i)}_{i=1}^{t}$ be the corresponding datasets. Then,
    \begin{enumerate}[label=(\roman*)]
        \item\label{it1:consistency BOKE} For any fixed $\ell_t \equiv \ell$, $\fX$ is eventually populated by the BOKE iterates within $\bigO(\ell)$ gaps, i.e., $\inf_t \filldist_{\fX, \mX_t} \le R_{\Psi} \ell$ holds almost surely.

        \item\label{it2:consistency BOKE} There exists a data-dependent bandwidth schedule $\set{\ell_t}_{t \ge 1}$, where each $\ell_t$ is determined by $\mD_{t-1}$, such that $\ell_t \asto 0$, and $\fX$ is eventually populated by the BOKE iterates, i.e., $\filldist_{\fX, \mX_t} \asto 0$ as $t \to \infty$.
    \end{enumerate}
\end{theorem}

\begin{proof}
    \proofparagraph{\cref{it1:consistency BOKE}}
    Suppose for contradiction that the event $\fE \coloneqq \set{\inf_{t} \filldist_{\fX, \mX_t} > R_{\Psi} \ell}$ has a strictly positive probability, i.e., $\PP[\fE] > 0$. 
    Conditioned on $\fE$, the monotonicity of $\filldist_{\fX, \mX_t}$ with respect to $t$ implies $\lim_{t \to \infty} \filldist_{\fX, \mX_t} > R_{\Psi} \ell$. 
    Hence, almost surely on $\fE$, there exist constants $N$ and $\epsilon > 0$ such that for all $t \ge N$, $\filldist_{\fX, \mX_t} = \sup_{\vx \in \fX} d(\vx, \mX_t) \ge R_{\Psi} \ell + \epsilon$. 
    Let $F_t = \set{\vx \in \fX : d(\vx, \mX_t) \ge R_{\Psi} \ell + \epsilon}$. 
    The continuity of $d(\vx, \mX_t)$ with respect to $\vx$ and the compactness of $\fX$ imply that $F_t$ is closed and non-empty. 
    Since $d(\vx, \mX_t) \ge d(\vx, \mX_{t+1})$, we have $\fX \supset F_t \supset F_{t+1}$. 
    By Cantor's intersection theorem, $\cap_{t=N}^{\infty} F_t \neq \varnothing$. 
    Thus, there exists $\vx' \in \cap_{t=N}^{\infty} F_t$ such that for all $t \ge N$, $d(\vx', \mX_t) \ge R_{\Psi} \ell + \epsilon$.
    Hence, $\inf_t d(\vx', \mX_t) = \lim_{t \to \infty} d(\vx', \mX_t) > R_{\Psi} \ell$.
    By \cref{lem:SNEB IKR-UCB}\cref{it1:SNEB IKR-UCB}, $\tilde{a}(\vx'; \mD_t) \ptogiven{\fE} \varrho^{-1/2}$ as $t \to \infty$.

    By compactness of $\fX$, $\set{\vx_t}_{t \ge 1}$ has a convergent subsequence $\set{\vx_{t_n}}_{n \ge 1}$.
    By \cref{lem:SNEB IKR-UCB}\cref{it2:SNEB IKR-UCB}, $\tilde{a}(\vx_{t_n}; \mD_{t_n - 1}) \ptogiven{\fE} 0$ as $n \to \infty$.
    Then, by Slutsky's theorem,
    \[
        \tilde{a}(\vx'; \mD_{t_n - 1}) - \tilde{a}(\vx_{t_n}; \mD_{t_n - 1}) \ptogiven{\fE} \varrho^{-1/2}
        \quad \text{ as } n \to \infty.
    \]
    Hence, by the definition of conditional probability $\PP[\cdot \cap \fE] = \PP\left[ \cdot \given \fE \right] \PP[\fE]$, we equivalently have
    \begin{equation}\label{eqn:acq x' > x_tn}
        \lim_{n \to \infty} \PP\left[ \set{\tilde{a}(\vx'; \mD_{t_n - 1}) > \tilde{a}(\vx_{t_n}; \mD_{t_n - 1})} \cap \fE \right] = \PP[\fE].
    \end{equation}
    However, the IKR-UCB criterion \eqref{eqn:max IKR-UCB} gives $\vx_{t_n} \in \argmax_{\vx \in \fX} \tilde{a}(\vx; \mD_{t_n - 1})$, so
    \[
        \tilde{a}(\vx; \mD_{t_n - 1}) \le \tilde{a}(\vx_{t_n}; \mD_{t_n - 1}), \quad \forall \vx \in \fX, \enspace \forall n \ge 1
    \]
    holds almost surely. 
    This implies
    \begin{equation}\label{eqn:acq x' <= x_tn}
        \PP\left[ \set{\tilde{a}(\vx'; \mD_{t_n - 1}) > \tilde{a}(\vx_{t_n}; \mD_{t_n - 1})} \cap \fE \right] = 0, \quad \forall n \ge 1.
    \end{equation}
    Combining this with \cref{eqn:acq x' > x_tn} yields $\PP[\fE] = 0$, which directly contradicts our initial assumption that $\PP[\fE] > 0$.
    Therefore, the assumption must be false, and hence $\inf_{t} \filldist_{\fX, \mX_t} \le R_{\Psi} \ell$ almost surely.

    \proofparagraph{\cref{it2:consistency BOKE}}
    Let $\set{\ell'_n}_{n \ge 1}$ be a deterministic sequence chosen to satisfy \cref{asm:bandwidth}, such as $\ell'_n = 1 / n$ for all $n \ge 1$.
    Denote by $\fF_t$ the natural filtration generated by the dataset $\mD_t$.
    We recursively construct the bandwidth schedule $\set{\ell_t}_{t \ge 1}$ alongside a sequence of random times $\set{\tau_n}_{n \ge 0}$. 
    Let $\tau_0 = 0$. 
    For each $n \ge 1$, maintain the bandwidth $\ell_t = \ell'_n$ for all $t > \tau_{n-1}$ until the following stopping time is reached:
    \[
        \tau_n = \inf\set{t > \tau_{n-1} : \filldist_{\fX, \mX_t} < 2 R_{\Psi} \ell'_n}.
    \]
    Since $\filldist_{\fX, \mX_t}$ is $\fF_t$-measurable, for any $n \ge 1$, $\tau_n$ is a stopping time with respect to the filtration $\set{\fF_t}_{t \ge 1}$.
    Assume inductively that $\PP[\tau_{n-1} < \infty] = 1$ for some $n \ge 1$.
    Conditioned on $\fF_{\tau_{n-1}}$, the iterates $\set{\vx_t}_{t > \tau_{n-1}}$ can be seen as being generated by a restarted BOKE algorithm given the initial dataset $\mD_{\tau_{n-1}}$ and a fixed bandwidth $\ell'_n$.
    By part \cref{it1:consistency BOKE}, these iterates satisfy $\inf_{t > \tau_{n-1}} \filldist_{\fX, \mX_t} \le R_{\Psi} \ell'_n$ almost surely.
    Hence, $\PP\left[ \tau_n < \infty \given \fF_{\tau_{n-1}} \right] = 1$, a.s. 
    By taking the expectation over $\fF_{\tau_{n-1}}$, we obtain unconditionally $\PP[ \tau_n < \infty] = \EE\left[ \PP\left[ \tau_n < \infty \given \fF_{\tau_{n-1}} \right] \right] = 1$.
    By mathematical induction, $\PP[\tau_n < \infty] = 1$ holds for all $n \ge 1$.
    Since $\set{\tau_n}_{n \ge 0}$ is a sequence of almost surely finite stopping times, the recursively constructed bandwidth can be equivalently written as
    \[
        \ell_t = \sum_{n=1}^{\infty} \ell'_n \1_{\set{\tau_{n-1} < t \le \tau_n}}, \quad \forall t \ge 1.
    \]
    Note that the event $\set{\tau_{n-1} < t \le \tau_n} = \set{\tau_{n-1} \le t - 1} \cap \set{\tau_n \le t - 1}^{\complement}$ is $\fF_{t-1}$-measurable; therefore, $\set{\ell_t}_{t \ge 1}$ is a predictable sequence.
    Since $\filldist_{\fX, \mX_t}$ is monotonically non-increasing with respect to $t$, $\set{\tau_n}_{n \ge 0}$ is strictly increasing and $\tau_n \asto \infty$.
    Consequently, as $t \to \infty$, the active bandwidth $\ell_t$ tracks the tail of $\set{\ell'_n}_{n \ge 1}$, yielding $\ell_t \asto 0$. Furthermore, $\filldist_{\fX, \mX_{\tau_n}} \le 2 R_{\Psi} \ell'_n \to 0$ almost surely, which combined with the monotonicity of $\filldist_{\fX, \mX_t}$ gives $\filldist_{\fX, \mX_t} \asto 0$.
\end{proof}

The proof of algorithmic consistency for BOKE+ parallels that for BOKE, using the Borel--Cantelli lemma to replicate the contradiction argument in \cref{thm:consistency BOKE}.

\begin{corollary}[Algorithmic consistency of BOKE+]
    \label{cor:consistency BOKE+}
    Suppose \cref{asm:domain,asm:function,asm:kernel,asm:noise,asm:bandwidth} hold. 
    Then the conclusions of \cref{thm:consistency BOKE} also apply to the BOKE+ iterates.
\end{corollary}

\begin{proof}
    First, suppose for contradiction that the event $\fE \coloneqq \set{\inf_{t} \filldist_{\fX, \mX_t} > R_{\Psi} \ell}$ has a strictly positive probability, i.e., $\PP[\fE] > 0$. 
    Conditioned on $\fE$, following the same argument as in \cref{thm:consistency BOKE}\cref{it1:consistency BOKE}, there exists $\vx' \in \fX$ such that $\tilde{a}(\vx'; \mD_t) \ptogiven{\fE} \varrho^{-1/2}$ as $t \to \infty$.

    Denote by $I_t \in \set{0, 1}$ the indicator variable for the event that the IKR-UCB acquisition function is chosen at iteration $t$.
    Let $\fT = \set{t \ge 1 : I_t = 1}$ be the random index set of iterations where the IKR-UCB strategy is applied.
    Since $\set{I_t}_{t \ge 1}$ are independent Bernoulli trials with $\PP[I_t = 1] = q > 0$ for all $t \ge 1$, the second Borel--Cantelli lemma implies that $\card{\fT} = \sum_{t=1}^{\infty} I_t = \infty$, $\PP$-a.s., and consequently almost surely under the conditional measure $\PP[\cdot|\fE]$.

    By compactness of $\fX$, $\set{\vx_t}_{t \in \fT}$ has a convergent subsequence $\set{\vx_{t_n}}_{n \ge 1}$.
    By \cref{lem:SNEB IKR-UCB}\cref{it2:SNEB IKR-UCB}, $\tilde{a}(\vx_{t_n}; \mD_{t_n - 1}) \ptogiven{\fE} 0$ as $n \to \infty$.
    Then, the same argument as in \cref{thm:consistency BOKE}\cref{it1:consistency BOKE} yields
    \[
        \lim_{n \to \infty} \PP\left[ \set{\tilde{a}(\vx'; \mD_{t_n - 1}) > \tilde{a}(\vx_{t_n}; \mD_{t_n - 1})} \cap \fE \right] = \PP[\fE].
    \]
    However, since $\set{t_n}_{n \ge 1} \subset \fT$, the IKR-UCB criterion \eqref{eqn:max IKR-UCB} yields
    \[
        \PP\left[ \set{\tilde{a}(\vx'; \mD_{t_n - 1}) > \tilde{a}(\vx_{t_n}; \mD_{t_n - 1})} \cap \fE \right] = 0, \quad \forall n \ge 1.
    \]
    This contradiction shows $\PP[\fE] = 0$.
    Hence $\inf_{t} \filldist_{\fX, \mX_t} \le R_{\Psi} \ell$ almost surely, which proves the first conclusion for BOKE+.
    Moreover, since the proof of \cref{thm:consistency BOKE}\cref{it2:consistency BOKE} relies only on part \cref{it1:consistency BOKE} and not on the acquisition strategy, the second conclusion extends to BOKE+ as well, completing the proof.
\end{proof}

The algorithmic consistency of DE follows as a special case of BOKE by setting $f \equiv 0$ and assuming a noise-free setting (i.e., $\varsigma^2 = 0$). 
A self-contained proof of \cref{cor:consistency DE} is given in \cref{app:proof of algorithmic consistency}.
The same argument extends to BOKE and BOKE+, implying convergence of simple regret in the noise-free setting (i.e., $\lim_{t \to \infty} \SimReg_t = 0$).

\begin{corollary}[Algorithmic consistency of DE]
    \label{cor:consistency DE}
    Suppose \cref{asm:domain,asm:function,asm:kernel} hold. Let $\mX_t = \set{\vx_1, \dots, \vx_t}$ denote the DE iterates. Then,
    \begin{enumerate}[label=(\roman*)]
        \item\label{it1:consistency DE} For any fixed $\ell_t \equiv \ell$, $\fX$ is eventually populated by the DE iterates within $\bigO(\ell)$ gaps, i.e., $\inf_t \filldist_{\fX, \mX_t} \le R_{\Psi} \ell$.

        \item\label{it2:consistency DE} There exists a data-dependent bandwidth schedule $\set{\ell_t}_{t \ge 1}$, where each $\ell_t$ is determined by $\mX_{t-1}$, such that $\lim_{t \to \infty} \ell_t = 0$, and $\fX$ is eventually populated by the DE iterates, i.e., $\lim_{t \to \infty} \filldist_{\fX, \mX_t} = 0$.
    \end{enumerate}
\end{corollary}

\begin{remark}
    As an algorithmic consistency result in the noise-free setting, the key difference between \cref{cor:consistency DE} and \cite[Theorem 3.9]{chen2023pseudo} lies in our use of the fill distance to characterize the denseness of query points. This improvement leads to the statement in part \cref{cor:consistency DE}\cref{it2:consistency DE}, showing that selecting an appropriate bandwidth can achieve better space-filling performance compared to using a fixed bandwidth.
\end{remark}

\subsection{Regret analysis}
\label{sec:regret analysis}

The theoretical regret of BOKE admits two complementary analytical perspectives: kernel density and fill distance. 
While the density-based approach adapts standard bandit techniques to yield explicit bounds for finite sets, its guarantees for continuous domains remain implicitly dependent on the empirical distribution of the iterates; we therefore defer this analysis to \cref{app:regret bound by KDE}.

To obtain explicit convergence rates for general decision sets, this section focuses exclusively on the latter approach: a novel, GP-free regret analysis governed by the fill distance. 
Our core analytical strategy bridges the prediction error bounds (\cref{thm:pointwise error of KR by fill distance,thm:uniform error of KR by fill distance}) with the algorithmic consistency of BOKE (\cref{thm:consistency BOKE}). 
This coupling allows us to establish simple regret bounds and asymptotic convergence in probability under a data-dependent bandwidth schedule (\cref{thm:simple regret BOKE}); these guarantees further extend to a sublinear cumulative regret bound via the modified BOKE+ algorithm (\cref{rmk:cumulative regret BOKE+}).

Before presenting the theoretical guarantees, we formally specify the recommendation strategy. 
For our simple regret analysis, we adopt the surrogate-based empirical best arm (EBA) strategy \cite{bubeck2011pure,vakili2021optimal}. 
Specifically, this strategy recommends the maximizer of a KR estimator $\hm_t$, i.e., $\hvx^*_t \in \argmax_{\vx \in \fX} \hm_t(\vx)$.

\begin{theorem}
    \label{thm:simple regret BOKE}
    Suppose \cref{asm:domain,asm:function,asm:kernel,asm:noise,asm:bandwidth} hold.
    For any sequence $\set{\beta_t}_{t \ge 1}$ with $\beta_t = \bigOmega(\sqrt{\ln t})$, let $\mX_t = \set{\vx_1, \dots, \vx_t}$ denote the BOKE iterates. Then,
    \begin{enumerate}[label=(\roman*)]
        \item\label{it1:simple regret BOKE} For any $\gamma \in (0, 1)$, there exist constants $C_0, C_1, \ellmax > 0$ depending only on $\fX$, $\Psi$, $f$, $\varrho$, $\varsigma^2$, and $\gamma$.
        For any $\delta \in (0, 1)$ and fixed $\ell_t \equiv \ell$, if $\ell \le \ellmax$, then
        \begin{equation}
            \liminf_{t \to \infty} \PP \left[ \SimReg_t \le C_0 \left( \filldist_{\fX, \mX_t}^{\gamma d / 2} \sqrt{d \ln \left( C_1 \filldist_{\fX, \mX_t}^{-d} \right) + \ln(8 / \delta)} + \omega_f \left( \filldist_{\fX, \mX_t}^{1 - \gamma} \right) \right) \right] 
            \ge 1 - \delta.
        \end{equation}

        \item\label{it2:simple regret BOKE} There exists a data-dependent bandwidth schedule $\set{\ell_t}_{t \ge T_0}$, where each $\ell_t$ is determined by $\mD_{t-1}$, such that $\ell_t \asto 0$ and $\SimReg_t \pto 0$ as $t \to \infty$.
    \end{enumerate}
\end{theorem}

\begin{proof}
    Recall the simple regret $\SimReg_t = f(\vx^*) - f(\hvx^*_t)$ evaluated under the aforementioned EBA strategy.
    We first specify the construction of the recommendation surrogate $\hm_t$.
    Assume $\hm_t$ has bandwidth $\hell_t$ and is fitted on a set of inducing points $\hmX_t \subset \mX_t$.
    Write $\hmX_t = \set{\hvx_i}_{i=1}^{s}$ with $s \coloneqq \card{\hmX_t} \le t$.
    The set $\hmX_t$ can be constructed greedily as a maximal $\filldist_{\fX, \mX_t}$-separated set, so that $\| \hvx_i - \hvx_j \| > \filldist_{\fX, \mX_t}$ for all $i \neq j$ (see, e.g., \cite[Remark 4.2.7]{vershynin2026high}).
    Consequently, this construction yields a $2 \filldist_{\fX, \mX_t}$-net of $\fX$.
    By the separation property, the balls $B \left( \hvx_i, \filldist_{\fX, \mX_t} / 2 \right)$ are disjoint.
    Since $\filldist_{\fX, \mX_t} \le \diam(\fX)$, each ball is contained in $\fX + (\diam(\fX) / 2) \unitball_d$, where $\unitball_d$ denotes the $d$-dimensional unit ball and $+$ is the Minkowski sum.
    Comparing the volumes gives 
    \[
        s \left( \filldist_{\fX, \mX_t} / 2 \right)^d \vol(\unitball_d)
        = \vol \left( \bigcup_{i=1}^{s} B \left( \hvx_i, \filldist_{\fX, \mX_t} / 2 \right) \right)
        \le \vol \left( \fX + (\diam(\fX) / 2) \unitball_d \right).
    \]
    Hence,
    \begin{equation}\label{eqn:size of inducing points}
        s \le \frac{\vol \left( \fX + (\diam(\fX) / 2) \unitball_d \right)}{(\filldist_{\fX, \mX_t} / 2)^d \vol(\unitball_d)} \eqqcolon A_0 \filldist_{\fX, \mX_t}^{-d}.
    \end{equation}

    \proofparagraph{\cref{it1:simple regret BOKE}}
    In order to apply the prediction error bound of $\hm_t$ in terms of the fill distance $\filldist_{\fX, \mX_t}$, it is required that $\hell_t \le \hellmax$ and $\filldist_{\fX, \mX_t} \le \filldist_0 \hell_t$, where $\hellmax \in (0, \diam(\fX)]$ is a universal constant specified in \cref{thm:uniform error of KR by fill distance}.
    Set $\hell_t = \filldist_{\fX, \mX_t}^{1 - \gamma}$ with $\gamma \in (0, 1)$.
    Then the above conditions are equivalent to $\filldist_{\fX, \mX_t} \le \min \set{\hellmax^{1 / (1 - \gamma)}, \filldist_0^{1 / \gamma}}$.

    Choose a universal constant $\ellmax \in \left( 0, (2 R_{\Psi})^{-1} \min \set{1, \hellmax^{1 / (1 - \gamma)}, \filldist_0^{1 / \gamma}} \right)$ such that the function $g(z) = z^{\gamma d / 2} \sqrt{\ln (1 / z)}$ is monotonically increasing on $(0, 2 R_{\Psi} \ellmax]$.
    By \cref{thm:consistency BOKE}\cref{it1:consistency BOKE}, define the random time
    \[
        \tau = \inf\set{t \ge T_0 : \filldist_{\fX, \mX_t} < 2 R_{\Psi} \ellmax}.
    \]
    Since $\ell_t \equiv \ell \le \ellmax$, the same argument as in \cref{thm:consistency BOKE}\cref{it2:consistency BOKE} shows that $\tau$ is a stopping time for the natural filtration generated by $\mD_t$, and that $\PP[\tau < \infty] = 1$.
    The monotonicity of $\filldist_{\fX, \mX_t}$ with respect to $t$ implies that the above conditions also hold for every $t \ge \tau$.
    Conditioned on the event $\set{\tau \le t}$, applying \cref{thm:pointwise error of KR by fill distance} at the optimizer $\vx^*$, there exists $A_1 > 0$ such that for any $\delta \in (0, 1)$, with probability at least $1 - \delta / 2$,
    \[
        \left| f(\vx^*) - \hm_t(\vx^*) \right| \le A_1 \left( \hell_t^{-d/2} \filldist_{\fX, \mX_t}^{d/2} \sqrt{\ln(4 / \delta)} + \omega_f(\hell_t) \right).
    \]
    Moreover, applying \cref{thm:uniform error of KR by fill distance} at $\vx_t$, there exists $A_2 > 0$ such that with probability at least $1 - \delta / 2$,
    \[
        \left| f(\hvx^*_t) - \hm_t(\hvx^*_t) \right| \le A_2 \left( \hell_t^{-d/2} \filldist_{\fX, \mX_t}^{d/2} \sqrt{d \ln(s + 1) + \ln(8 / \delta)} + \omega_f(\hell_t) \right).
    \]
    Let $C_0 \coloneqq 2 \max\set{A_1, A_2}$ and define $\hbeta_t = (C_0 / 2) \sqrt{d \ln(s + 1) + \ln(8 / \delta)}$. 
    Since $\hm_t(\hvx^*_t) \ge \hm_t(\vx^*)$, conditioning on the intersection of the above events yields, with probability at least $1 - \delta$ under $\PP\left[ \cdot \given \tau \le t \right]$,
    \[
        \begin{aligned}
             \SimReg_t
             &= f(\vx^*) - f(\hvx^*_t) \\
             &\le \hm_t(\vx^*) + \hbeta_t \hell_t^{-d/2} \filldist_{\fX, \mX_t}^{d/2} + (C_0 / 2) \omega_f(\hell_t) - f(\hvx^*_t) \\
             &\le \hm_t(\hvx^*_t) + \hbeta_t \hell_t^{-d/2} \filldist_{\fX, \mX_t}^{d/2} + (C_0 / 2) \omega_f(\hell_t) - f(\hvx^*_t) \\
             &\le 2 \hbeta_t \hell_t^{-d/2} \filldist_{\fX, \mX_t}^{d/2} + C_0 \omega_f(\hell_t) \\
             &\le C_0 \filldist_{\fX, \mX_t}^{\gamma d / 2} \sqrt{d \ln \left( A_0 \filldist_{\fX, \mX_t}^{-d} + 1 \right) + \ln(8 / \delta)} + C_0 \omega_f \left( \filldist_{\fX, \mX_t}^{1 - \gamma} \right) \\
             &\le C_0 \left( \filldist_{\fX, \mX_t}^{\gamma d / 2} \sqrt{d \ln \left( C_1 \filldist_{\fX, \mX_t}^{-d} \right) + \ln(8 / \delta)} + \omega_f \left( \filldist_{\fX, \mX_t}^{1 - \gamma} \right) \right) \eqqcolon U_t(\delta),
        \end{aligned}
    \]
    where the fourth inequality follows from \cref{eqn:size of inducing points}, and the constant $C_1 > 0$ in the last inequality is given by $\filldist_{\fX, \mX_t} \le \filldist_{\fX, \mX_{\tau}} < 2 R_{\Psi} \ellmax$.
    By the law of total probability,
    \[
        \PP[\SimReg_t \le U_t(\delta)] 
        \ge \PP\left[ \SimReg_t \le U_t(\delta) \given \tau \le t \right] \PP[\tau \le t]
        \ge (1 - \delta) \PP[\tau \le t].
    \]
    By the continuity of probability, we have
    \[
        \lim_{t \to \infty} \PP[\tau \le t] 
        = \PP\left[ \bigcup_{t=1}^{\infty} \set{\tau \le t} \right]
        = \PP[\tau < \infty] = 1.
    \]
    Hence,
    \[
        \liminf_{t \to \infty} \PP[\SimReg_t \le U_t(\delta)] 
        \ge (1 - \delta) \lim_{t \to \infty} \PP[\tau \le t]
        = 1 - \delta.
    \]

    \proofparagraph{\cref{it2:simple regret BOKE}}    
    Let $\set{\ell_n'}_{n \ge 1} \subset (0, \ellmax]$ be a deterministic sequence chosen to satisfy \cref{asm:bandwidth}, such as $\ell_n' = \ellmax / n$ for all $n \ge 1$.
    Denote by $\fF_t$ the natural filtration generated by the dataset $\mD_t$.

    As in \cref{thm:consistency BOKE}\cref{it2:consistency BOKE}, we recursively construct the bandwidth schedule $\set{\ell_t}_{t \ge T_0}$ alongside a sequence of random times $\set{\tau_n}_{n \ge 0}$.
    Let $\tau_0 = T_0 - 1$. 
    For each $n \ge 1$, maintain the bandwidth $\ell_t = \ell'_n$ for all $t > \tau_{n-1}$ until the following stopping time is reached:
    \[
        \tau_n = \inf\set{t > \tau_{n-1} : \filldist_{\fX, \mX_t} < 2 R_{\Psi} \ell'_n}.
    \]
    It can be proved that $\set{\tau_n}_{n \ge 0}$ is a sequence of almost surely finite stopping times, then the recursively constructed bandwidth can be written as
    \[
        \ell_t = \sum_{n=1}^{\infty} \ell'_n \1_{\set{\tau_{n-1} < t \le \tau_n}}, \quad \forall t \ge T_0.
    \]
    By part \cref{it1:simple regret BOKE}, for any fixed $\delta \in (0, 1)$ and each $n \ge 1$,
    \[
        \EE\left[ \1_{\set{\SimReg_t \ge U_t(\delta)}} \1_{\set{\tau_n < t \le \tau_{n+1}}} \given \fF_{\tau_n} \right] \le \delta \EE\left[ \1_{\set{\tau_n < t \le \tau_{n+1}}} \given \fF_{\tau_n} \right], \quad \text{a.s.}
    \]
    Since $\filldist_{\fX, \mX_{\tau_n}} \le 2 R_{\Psi} \ell'_n \le 2 R_{\Psi} \ellmax < 1$ and $\filldist_{\fX, \mX_{\tau_n}}$ is $\fF_{\tau_n}$-measurable, substituting $\delta$ with $\filldist_{\fX, \mX_{\tau_n}}$ yields
    \[
        \EE\left[ \1_{\set{\SimReg_t \ge U_t(\filldist_{\fX, \mX_{\tau_n}})}} \1_{\set{\tau_n < t \le \tau_{n+1}}} \given \fF_{\tau_n} \right] \le \filldist_{\fX, \mX_{\tau_n}} \EE\left[ \1_{\set{\tau_n < t \le \tau_{n+1}}} \given \fF_{\tau_n} \right], \quad \text{a.s.}
    \]
    By the choice of $\ellmax$ in part \cref{it1:simple regret BOKE}, the mapping $z \mapsto z^{\gamma d / 2} \sqrt{d \ln(C_1 z^{-d}) + C}$ is monotonically increasing on $(0, 2 R_{\Psi} \ellmax]$ for any constant $C \ge 0$.
    For all $t \in (\tau_n, \tau_{n+1}]$ with $n \ge 1$, the upper bound can be relaxed as
    \[
        \begin{aligned}
            & U_t(\filldist_{\fX, \mX_{\tau_n}}) 
            = C_0 \left( \filldist_{\fX, \mX_t}^{\gamma d / 2} \sqrt{d \ln \left( C_1 \filldist_{\fX, \mX_t}^{-d} \right) + \ln \left(8 \filldist_{\fX, \mX_{\tau_n}}^{-1} \right)} + \omega_f \left( \filldist_{\fX, \mX_t}^{1 - \gamma} \right) \right) \\
            \le& C_0 \left( \filldist_{\fX, \mX_{\tau_n}}^{\gamma d / 2} \sqrt{d \ln \left( C_1 \filldist_{\fX, \mX_{\tau_n}}^{-d} \right) + \ln \left(8 \filldist_{\fX, \mX_{\tau_n}}^{-1} \right)} + \omega_f \left( \filldist_{\fX, \mX_{\tau_n}}^{1 - \gamma} \right) \right)
            \eqqcolon V_n.
        \end{aligned}
    \]
    Let $N(t) = \sup\set{n : \tau_n < t}$. 
    We have $\set{N(t) = n} = \set{\tau_n < t \le \tau_{n+1}}$ and hence
    \[
        \begin{aligned}
            & \PP[ \SimReg_t \ge V_{N(t)}, N(t) \ge 1 ]
            = \sum_{n=1}^{\infty} \EE \left[ \1_{\set{\SimReg_t \ge V_n}} \cdot \1_{\set{N(t) = n}} \right] \\
            =& \sum_{n=1}^{\infty} \EE \left[ \EE \left[ \1_{\set{\SimReg_t \ge V_n}} \cdot \1_{\set{N(t) = n}} \given \fF_{\tau_n} \right] \right]
            \le \sum_{n=1}^{\infty} \EE \left[ \filldist_{\fX, \mX_{\tau_n}} \EE \left[ \1_{\set{N(t) = n}} \given \fF_{\tau_n} \right] \right] \\
            =& \sum_{n=1}^{\infty} \EE \left[ \filldist_{\fX, \mX_{\tau_n}} \1_{\set{N(t) = n}} \right]
            = \EE \left[ \filldist_{\fX, \mX_{\tau_{N(t)}}} \1_{\set{N(t) \ge 1}} \right].
        \end{aligned}
    \]
    Since $\lim_{t \to \infty} N(t) = \infty$, a.s., we have $\lim_{t \to \infty} \PP[N(t) = 0] = 0$.
    Moreover, since $\filldist_{\fX, \mX_{\tau_n}} \le 2 R_{\Psi} \ell_n' \asto 0$ as $n \to \infty$, the dominated convergence theorem gives
    \[
        \lim_{t \to \infty} \PP[ \SimReg_t \ge V_{N(t)}, N(t) \ge 1 ]
        = \lim_{t \to \infty} \EE \left[ \filldist_{\fX, \mX_{\tau_{N(t)}}} \1_{\set{N(t) \ge 1}} \right] = 0.
    \]
    Since $V_{N(t)} \asto 0$, for any $\epsilon > 0$ we have
    \[
        \begin{aligned}
            &\PP[\SimReg_t \ge \epsilon] 
            \le \PP[N(t) = 0] + \PP[\SimReg_t \ge \epsilon, N(t) \ge 1] \\
            \le& \PP[N(t) = 0] + \PP[V_{N(t)} \ge \epsilon, N(t) \ge 1] + \PP[\SimReg_t \ge V_{N(t)}, N(t) \ge 1].
        \end{aligned}
    \]
    Taking $t \to \infty$ yields $\SimReg_t \pto 0$, which completes the proof.
\end{proof}

\begin{remark}\label{rmk:cumulative regret BOKE+}
    By \cite[Lemma 2]{bubeck2011pure}, any algorithm that achieves expected simple regret $\EE[\SimReg_t] = \littleo(1)$ can be converted, by reordering its allocation and recommendation steps, into an algorithm with expected cumulative regret $\EE[\CumReg_T] = \littleo(T)$.
    Hence, from \cref{thm:simple regret BOKE}\cref{it2:simple regret BOKE}, one can construct a BOKE+ algorithm with sublinear cumulative regret that alternates between IKR-UCB and pure exploitation according to a fixed switching rule.
\end{remark}

\section{Numerical results}
\label{sec:numerics}

In this section, we conduct extensive experiments across a range of synthetic and real-world optimization tasks.
Our proposed algorithms, BOKE and BOKE+, are configured with hyperparameters $\varrho = 10^{-4}$ and $q = 0.5$.
The baseline algorithms considered in our experiments including random search (RS) \cite{bergstra2012random}, covariance matrix adaptation evolution strategy (CMA-ES) \cite{hansen2001completely}, GP-EI \cite{mockus1978application,jones1998efficient} (implemented as LogEI \cite{ament2023unexpected}), GP-UCB \cite{srinivas2010gaussian}, TPE \cite{bergstra2011algorithms}, BORE \cite{tiao2021bore}, LFBO \cite{song2022general}, KR-UCB \cite{yee2016monte}, and PseudoBO \cite{chen2023pseudo}.
Among the BO methods, the acquisition functions are specified as follows: GP-UCB, KR-UCB, and BOKE use UCB; TPE and BORE use PI; while GP-EI, LFBO, and PseudoBO use EI.
In particular, CMA-ES is implemented using \texttt{pycma} \cite{hansen2019pycma}, GP-EI and GP-UCB rely on \texttt{BoTorch} \cite{balandat2020botorch}, and TPE is executed via \texttt{Optuna} \cite{akiba2019optuna}.

We use Gaussian kernel throughout all methods and all tasks.
For KR-UCB, PseudoBO, BOKE, and BOKE+, the kernel bandwidth is chosen according to Silverman's rule of thumb \cite{silverman1998density}, which yields $\ell_t \propto \left( t \cdot \left( d + 2 \right) / 4 \right)^{-\frac{1}{d + 4}}$.
The tuning parameter for GP-UCB, BOKE, and BOKE+ is set to $\beta_t \propto 1 + \sqrt{d \ln(t + 1)}$.
All algorithms are initialized with the same set of points generated by LHS \cite{mckay1979comparison} and are executed on a single CPU core to assess algorithmic runtime.
The acquisition functions are optimized using either the L-BFGS-B algorithm \cite{byrd1995limited} with multiple starting points or Sobol sampling, ensuring an identical optimization budget of $1024$ total acquisition evaluations across all BO methods.

\begin{figure}[!htb]
    \centering
    \includegraphics[width=.98\textwidth]{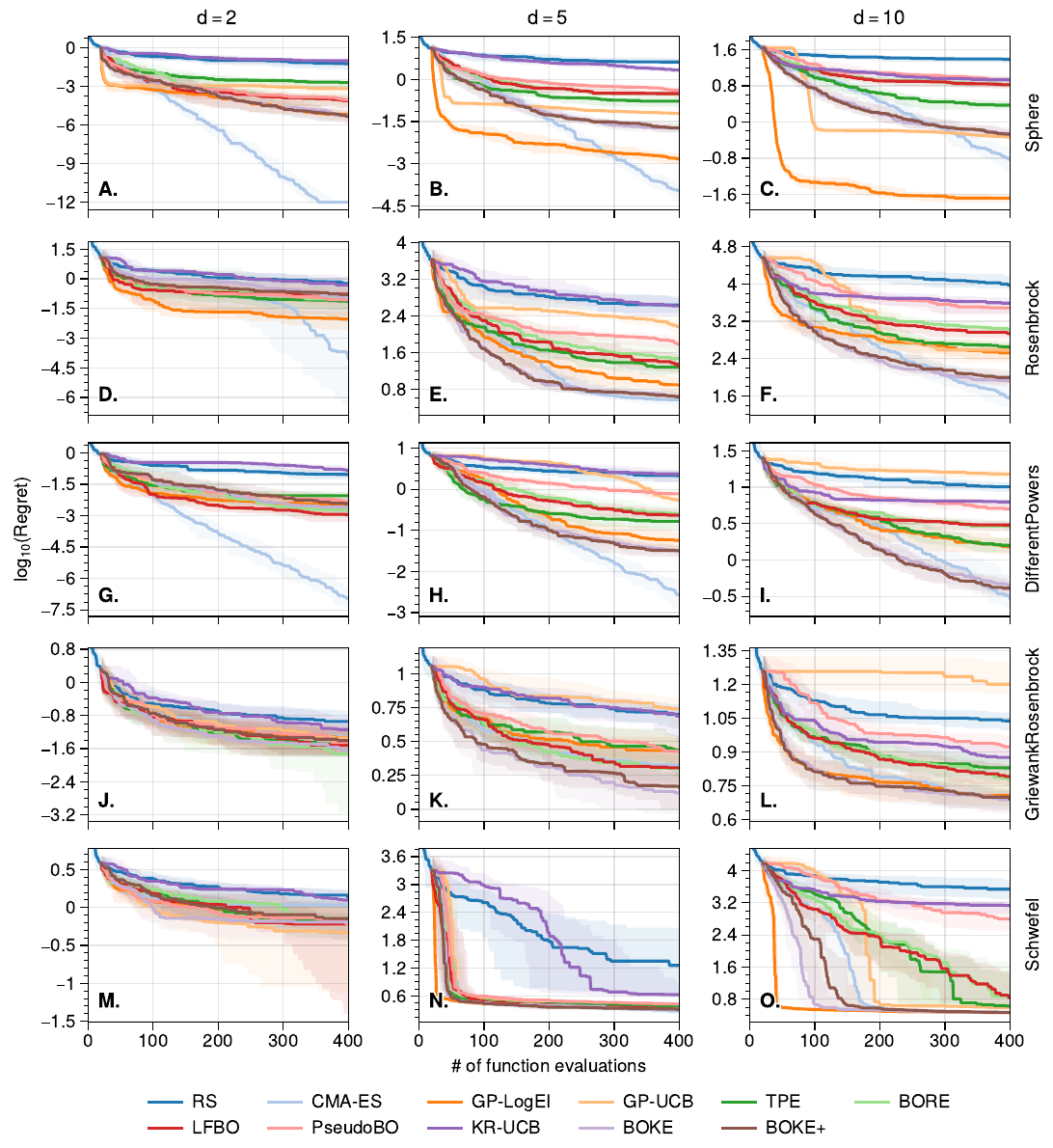}
    \caption{
        Logarithm of the simple regret and its distribution for noise-free evaluations on benchmark functions.
        Each column corresponds to a dimension $d \in \set{2, 5, 10}$; each row corresponds to a synthetic function, with the function name shown at the right of the row.
        Each plot displays the median and interquartile range over $50$ runs.
        All algorithms share the same $20$ initial sampling points generated by LHS.
    }
    \label{fig:benchmark noisefree batch 1}
\end{figure}

\subsection{Synthetic benchmark functions}
\label{sec:synthetic benchmark functions}

We empirically evaluate our method on a set of synthetic benchmark optimization tasks drawn from the black-box optimization benchmarking (BBOB) suite \cite{hansen2010comparing}, using the implementation of \texttt{IOHexperimenter} \cite{de2024iohexperimenter}.
To comprehensively evaluate algorithmic versatility across diverse landscapes while maintaining a concise presentation, we select two representative functions from each of the five official BBOB structural classes.
We present the convergence profiles for the first batch of five selected functions (one per class) in the main text, under both noise-free (\cref{fig:benchmark noisefree batch 1}) and noisy (\cref{fig:benchmark noisy batch 1}) settings. 
In the noisy setting, sampling noise is Gaussian with variance equal to $5\%$ of the signal variance.
The rows correspond to the structural classes: separable (A--C), low or moderate conditioning (D--F), high conditioning unimodal (G--I), multi-modal with adequate global structure (J--L), and multi-modal with weak global structure (M--O).
The comprehensive results for the remaining five functions (\cref{fig:benchmark noisefree batch 2,fig:benchmark noisy batch 2}) are deferred to \cref{app:additional numerics}.

\begin{figure}[!htb]
    \centering
    \includegraphics[width=.98\textwidth]{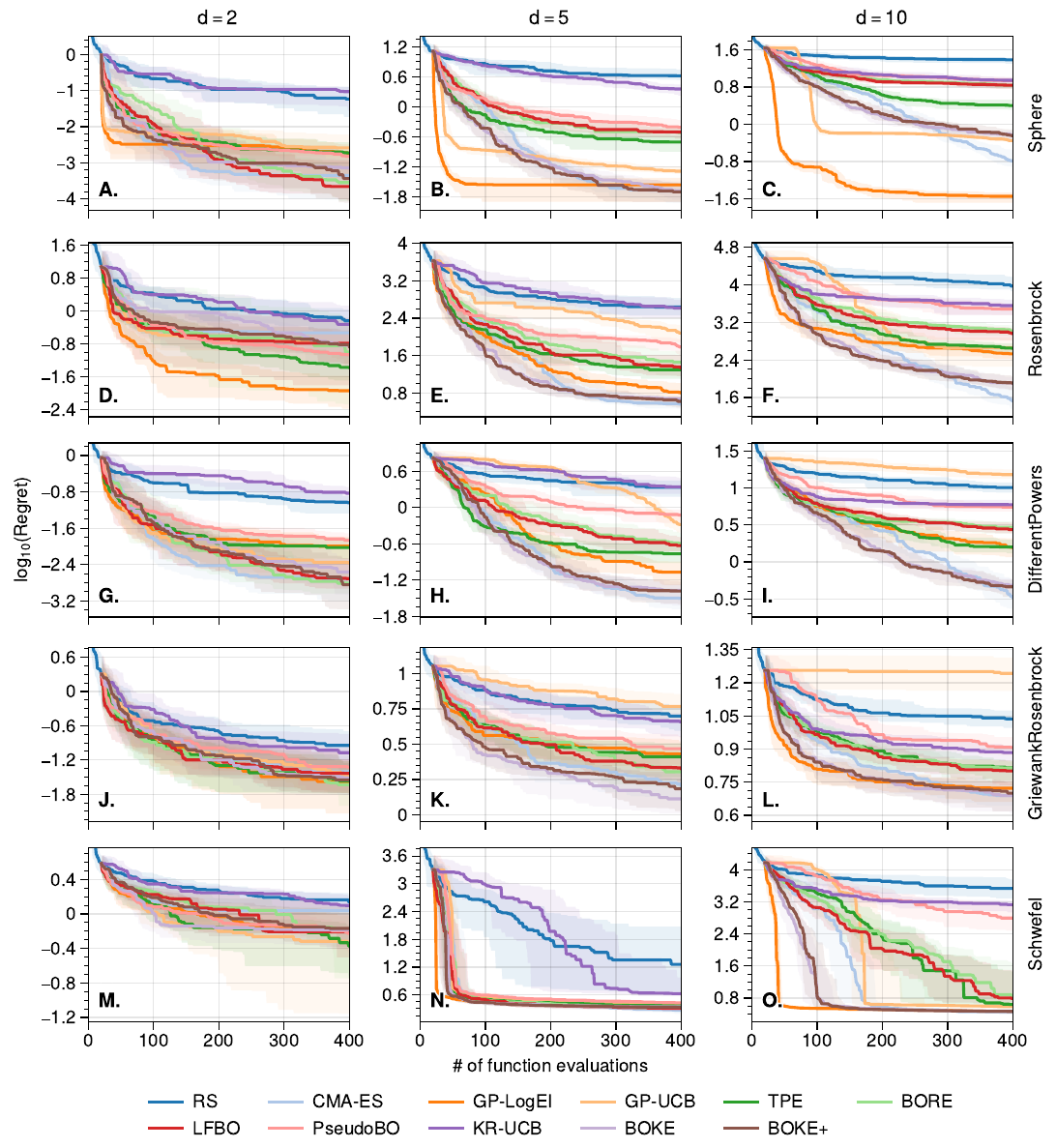}
    \caption{
        Logarithm of the simple regret and its distribution for noisy evaluations on benchmark functions.
        Each column corresponds to a dimension $d \in \set{2, 5, 10}$; each row corresponds to a synthetic function, with the function name shown at the right of the row.
        Each plot displays the median and interquartile range over $50$ runs.
        All algorithms share the same $20$ initial sampling points generated by LHS.
    }
    \label{fig:benchmark noisy batch 1}
\end{figure}

As shown in \cref{fig:benchmark noisefree batch 1,fig:benchmark noisy batch 1}, BOKE and BOKE+ achieve competitive performance compared to GP-based baselines across most scenarios and generally outperform other non-GP methods under both noise-free and noisy settings.
Among the UCB-based baselines, while KR-UCB is constrained by its local exploration strategy, BOKE employs a global exploration mechanism that effectively mitigates such limitations, yielding more robust convergence.

Nevertheless, BOKE and BOKE+ underperform in low-dimensional settings ($d = 2$) and on the Sphere function (panels A--C). 
In these scenarios, particularly under noise-free conditions, rapid convergence depends mainly on precise exploitation rather than broad exploration.
This suggests that, compared to GP-based methods, our algorithms exhibit a relatively limited exploitation capability, which is a common limitation of non-GP BO but a defining strength of GPs.
Additionally, while CMA-ES exhibits exceptionally strong asymptotic convergence on well-structured landscapes, particularly in noise-free settings (e.g., \cref{fig:benchmark noisefree batch 1}~(A--C, G--I)), its sample efficiency typically degrades as dimensionality, landscape complexity, or observation noise increases.
This performance shift underscores the robustness of BO methods.

\subsection{Sprinkler computer model}
\label{sec:sprinkler computer model}

The garden sprinkler computer model simulates the water consumption of a garden sprinkler system, taking into account factors such as the rotation speed and spray range \cite{siebertz2010statistische}.
We evaluate optimization algorithms using the implementation in the \texttt{CompModels} R package \cite{pourmohamad2021compmodels}, with a single-objective function that maximizes spray range by adjusting eight physical parameters of the sprinkler.

\begin{figure}[!htb]
    \centering
    \includegraphics[width=.98\textwidth]{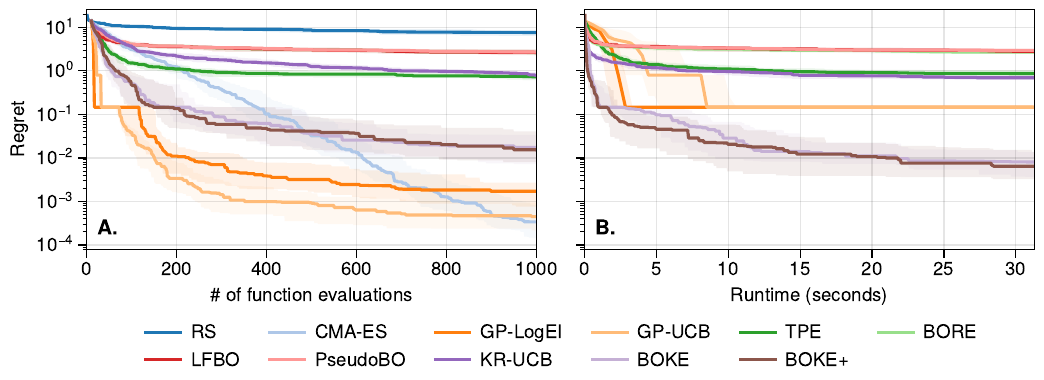}
    \caption{
        Simple regret and its distribution with respect to the number of function evaluations (panel A) and runtime (panel B) on the sprinkler computer problem (8D).
        Each plot displays the median and interquartile range over $50$ runs.
        All algorithms share the same $10$ initial sampling points generated by LHS.
    }
    \label{fig:sprinkler}
\end{figure}

Simulation results in \cref{fig:sprinkler} illustrate the algorithmic performance in terms of sample efficiency and runtime.
When plotted against the number of function evaluations (panel A), GP-based methods exhibit the highest sample efficiency. 
Although BOKE and BOKE+ ultimately lag behind GP-based methods and CMA-ES in asymptotic performance, they exhibit faster early-stage convergence than CMA-ES (prior to approximately $500$ evaluations) and maintain a competitive advantage over most other non-GP BO baselines.
However, when comparing BO algorithms by runtime (panel B; excluding RS and CMA-ES, which use paradigms distinct from the budget-constrained BO framework), BOKE and BOKE+ significantly outperform both GP-based and other non-GP methods, demonstrating their practical computational advantage for time-sensitive applications.
Notably, BOKE+ achieves greater runtime efficiency than BOKE in panel B, as the pure exploitation steps in its $\epsilon$-greedy scheme incur lower computational costs than optimizing the IKR-UCB acquisition function.

\subsection{Hyperparameter tuning}
\label{sec:hyperparameter tuning}
We empirically evaluate our method on hyperparameter optimization for random forest (RF) and XGBoost (XGB) regression tasks using datasets including California Housing \cite{pace1997sparse} and Wine Quality \cite{cortez2009modeling}.
For both problems, we utilize the implementation from the scikit-learn package and employ 5-fold cross-validation to obtain the goodness-of-fit measure, $R^2$, where higher values indicate better performance.
The parameters of interest are bounded, and for the RF problem, there are three integers: \texttt{n\_estimators}, \texttt{max\_depth}, and \texttt{min\_samples\_split}, and two real numbers: \texttt{max\_features} and \texttt{min\_impurity\_decrease}.
For the XGB problem, the hyperparameters include two integers: \texttt{n\_estimators} and \texttt{max\_depth}, and five real numbers: \texttt{learning\_rate}, \texttt{subsample}, \texttt{colsample\_bytree}, \texttt{gamma}, and \texttt{min\_child\_weight}.
To facilitate computation, we convert the integer variables to real numbers and apply rounding truncation.

\begin{figure}[!htb]
    \centering
    \includegraphics[width=.98\textwidth]{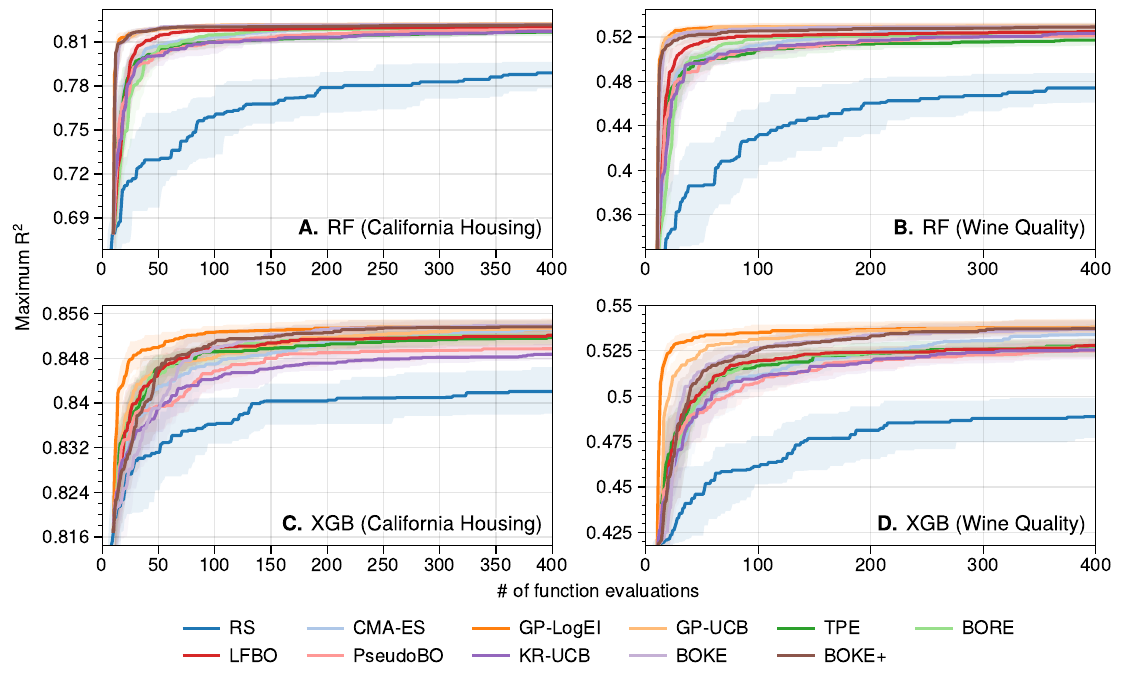}
    \caption{
        Maximum goodness-of-fit ($R^2$) and its distribution for the random forest tuning (5D; panels A and B) and XGBoost tuning (7D; panels C and D) problems.
        Each plot displays the median and interquartile range over $50$ runs.
        All algorithms share the same $10$ initial sampling points generated by LHS.
    }
    \label{fig:ml tuning}
\end{figure}

Across all hyperparameter tuning tasks, GP-based methods (particularly GP-EI) exhibit the highest sample efficiency. 
BOKE and BOKE+ follow closely, outperforming most other non-GP BO baselines, while CMA-ES shows moderate sample efficiency and lags behind these BO methods.
Overall, BOKE+ achieves better accuracy than BOKE (notably in panel C).
This is because the initial $R^2$ values are already close to their optimal values, making the objective landscapes favor exploitation.

\subsection{Runtime comparison}
\label{sec:runtime comparison}

We conclude the empirical evaluation by reporting the total wall-clock runtime of the compared algorithms in \cref{tab:runtime}.
The representative tasks selected for this comparison include the Rosenbrock synthetic functions (columns 2--4; see \cref{fig:benchmark noisefree batch 1}~(A--C)), the sprinkler computer model (column 5; see \cref{fig:sprinkler}), and the hyperparameter tuning tasks on the California Housing dataset (columns 6--7; see \cref{fig:ml tuning}~(A, C)).

\begin{table}[!htb]
    \caption{
        Comparison of average wall-clock runtimes (in seconds) across the selected tasks.
        To isolate the internal computational overhead of each optimization method, the reported times strictly exclude the objective function evaluation time. 
        Results are averaged over $50$ runs, with standard deviations provided.
    }
    \label{tab:runtime}
    \footnotesize 
    \setlength{\tabcolsep}{3pt}
    \centering
\begin{tabular}{@{}lcccccc@{}}
\toprule
\multirow{2}{*}{\textbf{Method}} & \multicolumn{3}{c}{\textbf{Synthetic}} & \multirow{2}{*}{\textbf{Sprinkler} (8D)} & \multirow{2}{*}{\textbf{RF} (5D)} & \multirow{2}{*}{\textbf{XGB} (7D)} \\ \cmidrule(lr){2-4}
 & 2D & 5D & 10D &  &  &  \\ \midrule
RS & $0.00_{ \pm 0.00 }$ & $0.00_{ \pm 0.00 }$ & $0.00_{ \pm 0.00 }$ & $0.00_{ \pm 0.00 }$ & $0.01_{ \pm 0.00 }$ & $0.00_{ \pm 0.00 }$ \\
CMA-ES & $0.04_{ \pm 0.00 }$ & $0.03_{ \pm 0.00 }$ & $0.03_{ \pm 0.00 }$ & $0.10_{ \pm 0.01 }$ & $0.07_{ \pm 0.06 }$ & $0.06_{ \pm 0.02 }$ \\
GP-EI & $156.67_{ \pm 20.90 }$ & $590.57_{ \pm 101.96 }$ & $1417.63_{ \pm 247.97 }$ & $11241.09_{ \pm 1958.88 }$ & $516.24_{ \pm 223.26 }$ & $578.26_{ \pm 188.83 }$ \\
GP-UCB & $160.62_{ \pm 27.26 }$ & $348.12_{ \pm 53.35 }$ & $590.73_{ \pm 76.54 }$ & $5032.61_{ \pm 1559.44 }$ & $365.08_{ \pm 117.77 }$ & $532.44_{ \pm 156.97 }$ \\
TPE & $10.74_{ \pm 1.23 }$ & $18.09_{ \pm 1.33 }$ & $30.41_{ \pm 1.20 }$ & $127.74_{ \pm 13.14 }$ & $17.50_{ \pm 1.30 }$ & $23.42_{ \pm 1.59 }$ \\
BORE & $6.02_{ \pm 0.71 }$ & $7.60_{ \pm 0.61 }$ & $9.96_{ \pm 0.36 }$ & $40.36_{ \pm 2.53 }$ & $5.84_{ \pm 0.42 }$ & $8.28_{ \pm 0.33 }$ \\
LFBO & $7.47_{ \pm 0.78 }$ & $9.74_{ \pm 0.58 }$ & $13.87_{ \pm 0.44 }$ & $53.65_{ \pm 2.84 }$ & $10.85_{ \pm 2.57 }$ & $12.38_{ \pm 0.94 }$ \\
PseudoBO & $9.51_{ \pm 0.59 }$ & $10.55_{ \pm 0.43 }$ & $12.41_{ \pm 0.27 }$ & $61.42_{ \pm 2.30 }$ & $15.17_{ \pm 10.77 }$ & $19.97_{ \pm 15.32 }$ \\
KR-UCB & $1.35_{ \pm 0.08 }$ & $1.70_{ \pm 0.04 }$ & $2.02_{ \pm 0.06 }$ & $15.01_{ \pm 0.54 }$ & $4.36_{ \pm 4.34 }$ & $8.15_{ \pm 13.15 }$ \\
BOKE & $1.96_{ \pm 0.06 }$ & $2.80_{ \pm 0.05 }$ & $3.31_{ \pm 0.03 }$ & $13.13_{ \pm 0.21 }$ & $3.56_{ \pm 1.48 }$ & $12.19_{ \pm 24.86 }$ \\
BOKE+ & $1.89_{ \pm 0.06 }$ & $2.72_{ \pm 0.07 }$ & $3.27_{ \pm 0.05 }$ & $13.12_{ \pm 0.37 }$ & $5.68_{ \pm 5.86 }$ & $11.14_{ \pm 7.70 }$ \\ \bottomrule
\end{tabular}
\end{table}

As expected, RS and CMA-ES exhibit substantially lower runtimes than all BO algorithms.
While KR-UCB is relatively fast for small iteration budgets due to its localized exploration, its overall $\bigO(T^3 + \Nacq T^2)$ time complexity degrades its runtime efficiency at larger scales (e.g., the sprinkler task in column 5).
Owing to their overall $\bigO(\Nacq T^2)$ complexity, BOKE and BOKE+ maintain consistently low computational overhead across all tasks, outperforming other non-GP BO baselines such as BORE, LFBO, PseudoBO, and TPE.
Finally, the exact GP-based algorithms (GP-UCB and GP-EI) prove to be the most computationally demanding methods, which is consistent with the theoretical complexity analysis in \cref{sec:computational complexity}.

\section{Conclusions}
\label{sec:conclusions}

We proposed BOKE, a Bayesian optimization algorithm that employs kernel regression as the surrogate model and kernel density estimation for exploration.
The combination of the kernel regression surrogate and the space-filling property of density-based exploration yields an effective trade-off between exploitation and exploration.
We established a rigorous analysis of the algorithm's global convergence under noisy evaluations.
Numerical experiments show that BOKE attains competitive convergence performance with GP-based and other BO baselines, while substantially reducing the overall time complexity from $\bigO(T^4 + \Nacq T^3)$ to $\bigO(\Nacq T^2)$.

While the proposed approach demonstrates competitive empirical performance and enjoys theoretical convergence guarantees, it presents several limitations that motivate promising directions for future work:
\begin{itemize}
    \item \textbf{Absence of finite-time guarantees:}
    Unlike classical GP-based analyses, the algorithmic consistency established in our theoretical framework guarantees only asymptotic convergence. 
    Extending this framework to a rigorous non-asymptotic regime remains an important open problem.
    One possible approach is to derive finite-time regret bounds by characterizing the decay rate of the fill distance, potentially building upon existing analyses of space-filling designs, such as those for the P-greedy algorithm \cite{wenzel2021novel}.

    \item \textbf{Sensitivity to hyperparameter selection:}
    The algorithm's performance is intrinsically tied to the choice of the bandwidth parameter. 
    While our experiments indicate that the currently adopted Silverman's rule of thumb effectively promotes exploration, it can oversmooth the surrogate, leading to biased KR predictions. 
    Future work will explore adaptive, data-driven bandwidth selection methods, such as cross-validation \cite{clark1977non}, to improve exploitation, alongside the necessary theoretical extensions to accommodate data-dependent bandwidths.

    \item \textbf{Lack of probabilistic grounding:}
    A fundamental limitation of replacing GPs with KR is the loss of a rigorous probabilistic framework for principled uncertainty quantification. 
    Furthermore, because KR inherently introduces smoothing bias, its exploitation capability is relatively limited compared to GPs.
    Future research should investigate alternative surrogates, such as local polynomial regression \cite{fan1996local} or scalable GPs \cite{liu2020gaussian}, to recover stronger predictive accuracy and calibrated uncertainty estimates while preserving the $\bigO(\Nacq T^2)$ computational scalability.

    \item \textbf{Computational bottlenecks for large $\Nacq$:} 
    Since maximizing the acquisition function exactly is intractable, guaranteeing sublinear regret bounds via random search requires the number of acquisition evaluations $\Nacq$ to grow with $t$ \cite{kim2025bayesian}.
    In this regime ($\Nacq = \bigOmega(t)$), BOKE's $\bigO(t)$ per-query evaluation cost emerges as a critical bottleneck.
    To mitigate this, tree-based data structures (e.g., ball-trees \cite{omohundro1989five}) can exploit the spatial locality of the kernel to accelerate KR and KDE evaluations, theoretically reducing the overall time complexity to $\bigO(\Nacq T \log T)$ and offering a compelling direction for large-scale deployment.
\end{itemize}

\section*{Acknowledgements}
L. Ji acknowledges support from Shanghai Academy of Spaceflight Technology Funded project USCAST2022-18. The authors also acknowledge the support of the HPC center of Shanghai Jiao Tong University.

\section*{Declarations}

\subparagraph{Conflict of interest.} 
The authors have no competing interests to declare that are relevant to the content of this article.

\subparagraph{Data and code availability.} 
The authors confirm that all data generated or analyzed during this study are included in this article or publicly available; see the corresponding references in this article. Codes are available from the corresponding author on reasonable request.

\appendix

\crefalias{section}{appendix}
\crefalias{subsection}{appendix}

\numberwithin{equation}{section}
\renewcommand{\theequation}{\thesection\arabic{equation}}
\renewcommand{\theHsection}{\Alph{section}}
\renewcommand{\theHequation}{\theHsection\arabic{equation}}

\section{Extended background on UCB-based baseline algorithms}
\label{app:extended background}

This section presents extended background on two UCB-based baseline algorithms: the Gaussian process upper confidence bound (GP-UCB, \cref{alg:GP-UCB}) and the kernel regression upper confidence bound (KR-UCB, \cref{alg:KR-UCB}). 
We describe their respective surrogate models, acquisition functions, and optimization procedures.

\subsection{GP-UCB}
\label{app:GP-UCB}

Gaussian processes (GPs) have been the most widely used surrogate model in the literature of Bayesian optimization, which views the objective function $f$ as a random realization of a Gaussian process. Suppose the mean function of the GP is zero, and its covariance function is $k: \fX \times \fX \to \RR$. That is, for any $t \ge 1$ and a finite set $\set{\vx_1, \dots, \vx_t} \subset \fX$, the vector $( f(\vx_1), \dots, f(\vx_t) )^\transpose$ follows a multivariate normal distribution with mean vector $\bm{0}$ and a covariance matrix whose $(i, j)$-entry is $k(\vx_i, \vx_j)$. 
Let $y_i = f(\vx_i) + \varepsilon_i$, where the additive noise terms $\varepsilon_i$ are i.i.d. random variables following $\fN(0, \sigma^2)$ (note that $\sigma^2$ is the model's assumed noise variance, distinct from the true variance proxy $\varsigma^2$). Then the posterior distribution of $f(\vx)$ at each $\vx \in \fX$ is also a normal distribution with its mean and variance
\begin{align}
    \mu_t(\vx) &= \vk_t^\transpose(\vx) (\mK_t + \sigma^2 \mI_t)^{-1} \vy_t, \label{eqn:GPR} \\
    \sigma_t^2(\vx) &= k(\vx, \vx) - \vk_t^\transpose(\vx) (\mK_t + \sigma^2 \mI_t)^{-1} \vk_t(\vx), \label{eqn:GP variance}
\end{align}
where $\mI_t$ is the $t$-dimensional identity matrix, $\vy_t = ( y_1, \dots, y_t )^\transpose \in \RR^t$, $\vk_t(\vx) = ( k(\vx, \vx_1), \dots, k(\vx, \vx_t) )^\transpose \in \RR^t$, and $\mK_t = ( k(\vx_i, \vx_j) )_{i,j=1}^{t} \in \RR^{t \times t}$.
Typically, the kernel function $k$ is chosen as a Gaussian or Mat\'ern kernel (see, e.g., \cite[Chapter 4]{rasmussen2006gaussian}).
The GP-UCB acquisition function takes the form of a weighted sum of the posterior mean and standard deviation \cite{srinivas2010gaussian}. 
The next evaluation point $\vx_{t+1}$ is selected by
\begin{equation}\label{eqn:max GP-UCB}
    \vx_{t+1} \in \argmax_{\vx \in \fX} \mu_t(\vx) + \beta_t \sigma_t(\vx).
\end{equation}
The tuning parameter $\beta_t$ balances exploitation and exploration at iteration $t$. 
Theoretical analyses typically set $\beta_t = \bigTheta(\sqrt{\ln t})$ to obtain a bound on the cumulative regret \cite{srinivas2010gaussian}.

In practice, GP regression uses the Cholesky decomposition $\mK_t + \sigma^2 \mI_t = \mL_t \mL_t^\transpose$ instead of direct matrix inversion to achieve improved numerical stability and computational efficiency.
For a dataset of size $t$, computing the Cholesky factor requires $\bigO(t^3)$ operations. 
Once cached, predicting the posterior mean $\mu_t(\vx)$ for a new test point $\vx$ takes $\bigO(t)$ time.
This is because the term $(\mK_t + \sigma^2 \mI_t)^{-1} \vy_t$ is precomputed as a fixed vector during training, reducing the evaluation to a simple inner product with $\vk_t(\vx)$. 
In contrast, computing the predictive variance $\sigma_t^2(\vx)$ involves evaluating $\| \mL_t^{-1} \vk_t(\vx) \|_2^2$, which requires solving a lower triangular linear system for $\mL_t \vv = \vk_t(\vx)$ at a cost of $\bigO(t^2)$ (see, e.g., \cite[Algorithm 2.1]{rasmussen2006gaussian}).
If the kernel hyperparameters are held fixed, the GP posterior update can be accelerated to $\bigO(t^2)$ \cite{chevalier2013corrected}; 
however, because the kernel hyperparameters are re-optimized by maximizing the marginal likelihood in standard Bayesian optimization workflows, the cost of each such re-optimization is $\bigO(t^3)$ and typically dominates the per-iteration complexity, though the amortized cost can be reduced by less frequent re-optimization as the estimation stabilizes.
The overall procedure of the GP-UCB algorithm is summarized in \cref{alg:GP-UCB}.

\begin{algorithm}[!htb]
    \caption{GP-UCB algorithm}
    \label{alg:GP-UCB}
    \KwIn{initial dataset $\mD_{T_0}$, budget $T$, kernel $k$, parameters $\set{\beta_i}_{i=T_0}^{T-1}$.}
    \For{$t = T_0, \dots, T-1$}{
        Compute $\mu_t$ and $\sigma_t$ using \cref{eqn:GPR,eqn:GP variance}. \;
        Select $\vx_{t+1}$ by solving \cref{eqn:max GP-UCB}. \;
        Evaluate $f$ at $\vx_{t+1}$ to obtain $y_{t+1}$ and update $\mD_{t+1} \gets \mD_{t} \cup \set{(\vx_{t+1}, y_{t+1})}$.
    }
\end{algorithm}

\subsection{KR-UCB}
\label{app:KR-UCB}

The KR-UCB policy stated in \cref{eqn:max KR-UCB select} is proposed as a smoothed version of the vanilla UCB1 policy \eqref{eqn:max UCB1} to select among already explored arms.
It is derived by replacing the empirical mean with the KR estimate $m_t(\vx)$ \eqref{eqn:KR}, and the discrete pull counts with the kernel density $W_t(\vx)$ \eqref{eqn:KDE}, yielding
\begin{equation}
    \label{eqn:max KR-UCB select}
    \mathring{\vx}_{t+1} \in \argmax_{\vx_i \in \mX_t} \left( m_t(\vx_i) + C \sqrt{\frac{\ln \sum_{j = 1}^{t} W_t(\vx_j)}{W_t(\vx_i)}} \right),
\end{equation}
where $C > 0$ is a tuning parameter.
To extend this approach to a general decision set $\fX$, a progressive widening strategy \cite{couetoux2011continuous} is employed. 
This strategy generalizes \cref{eqn:max KR-UCB select} by adding new points into the candidate arm set whenever the number of active arms $M$ does not exceed $t^{\eta}$, where $\eta \in (0, 1]$ is a constant controlling the widening rate.

When adding a new point, the algorithm explores the search space $\fX$ via a density-based exploration mechanism, as defined in \cref{eqn:min KR-UCB widen}.
\begin{equation}
    \label{eqn:min KR-UCB widen}
    \vx_{t+1} \in \argmin_{\vx \in \fX, k(\vx, \mathring{\vx}_{t+1}) > \kappa} W_t(\vx),
\end{equation}
where $\kappa > 0$ ensures that $\vx_{t+1}$ remains in the neighborhood of $\mathring{\vx}_{t+1}$ while avoiding regions densely populated by existing observations $\mX_t$.
Together, the discrete selection \cref{eqn:max KR-UCB select} and the continuous exploration \cref{eqn:min KR-UCB widen} constitute the complete KR-UCB allocation strategy.

The KR-UCB algorithm is summarized in \cref{alg:KR-UCB}.
The computational complexity of KR-UCB is $\bigO(t^2)$ per iteration, primarily because evaluating \cref{eqn:max KR-UCB select} requires computing pairwise kernel values among $\mX_t$.
When combined with Monte Carlo tree search (MCTS), this policy yields the KR-UCB for trees (KR-UCT) algorithm \cite{yee2016monte}, the original context for which KR-UCB was developed.

\begin{algorithm}[!htb]
    \caption{KR-UCB algorithm}
    \label{alg:KR-UCB}
    \KwIn{initial dataset $\mD_{T_0}$, budget $T$, kernel $k$, parameters $C, \kappa > 0$, and $\eta \in (0, 1]$.}
    Initialize the number of arms $M \gets T_0$. \;
    \For{$t = T_0, \dots, T-1$}{
        Compute $m_t$ and $W_t$ using \cref{eqn:KR,eqn:KDE}. \;
        Obtain $\mathring{\vx}_{t+1}$ by solving \cref{eqn:max KR-UCB select}. \;
        If $t^{\eta} < M$, set $\vx_{t+1} \gets \mathring{\vx}_{t+1}$; otherwise select $\vx_{t+1}$ by solving \cref{eqn:min KR-UCB widen} and update $M \gets M + 1$. \;
        Evaluate $f$ at $\vx_{t+1}$ to obtain $y_{t+1}$ and update $\mD_{t+1} \gets \mD_{t} \cup \set{(\vx_{t+1}, y_{t+1})}$.
    }
\end{algorithm}

\section{Regret bounds in terms of the kernel density}
\label{app:regret bound by KDE}

As discussed in \cref{sec:regret analysis}, this appendix details the density-based regret analysis of BOKE by adapting standard UCB theory \cite{lattimore2020bandit,srinivas2010gaussian}.
We first establish explicit expected cumulative regret bounds for finite decision sets, recovering the standard gap-dependent and gap-independent rates (\cref{thm:cumulative regret BOKE MAB}).
For general decision sets, we derive a high-probability cumulative regret bound (\cref{thm:cumulative regret BOKE}), demonstrating that it inherently depends on the kernel density evaluated at the query points.

\subsection{Finite decision set}

We first consider a finite decision set $X \subset \fX$ with $M \coloneqq \card{X}$ discrete arms.
For each arm $\vx \in X$, define the suboptimality gap $\Delta(\vx) = f(\vx^*) - f(\vx)$.
In \cref{thm:cumulative regret BOKE MAB}, we follow a standard approach (see, e.g., \cite[Section 8.1]{lattimore2020bandit}) and use $W_t(\vx) \ge \sum_{i=1}^{t} \1_{\set{\vx_i = \vx}}$ to show that BOKE achieves a cumulative regret bound comparable to UCB1 \eqref{eqn:max UCB1}.
If the bandwidth is sufficiently small, BOKE reduces exactly to UCB1.

\begin{theorem}
    \label{thm:cumulative regret BOKE MAB}
    Suppose \cref{asm:domain,asm:function,asm:kernel,asm:noise} hold, and every arm is played at least once during the initial phase.
    There exist constants $C_0$, $C_1$, $C_2$, $T_1$, $T_2$, $\ellmax > 0$ depending only on $X$, $\Psi$, $f$, $\varrho$, and $\varsigma^2$.
    Let $\beta_t = C_0 \sqrt{\ln (t + 1)}$.
    If $\ell_t < \ellmax$ for all $t \ge T_1$, then,
    \begin{equation}
        \EE[ \CumReg_T ] \le C_1 \min\set{\sqrt{M T \ln T}, \frac{M \ln T}{\Delta_{\min}}} + C_2 \sum_{\vx \in X} \Delta(\vx), \quad \forall T \ge T_2,
    \end{equation}
    where $\Delta_{\min} \coloneqq \min_{\vx \in X : \Delta(\vx) > 0} \Delta(\vx)$ is the minimal suboptimality gap.
\end{theorem}

\begin{proof}
    For each arm $\vx \in X$, let $n_t(\vx) = \sum_{i=1}^{t} \1_{\set{\vx_i = \vx}}$ denote the number of times $\vx$ has been pulled up to iteration $t$.
    After the initial phase, $n_{T_0}(\vx) \ge 1$ for all $\vx \in X$.
    The expected cumulative regret can be written as
    \[
        \EE[\CumReg_T] = \EE \left[ \sum_{t=1}^{T} ( f(\vx^*) - f(\vx_t) ) \right] = \sum_{\vx \in X} \Delta(\vx) \EE[n_T(\vx)].
    \]
    Since $\Delta(\vx^*) = 0$, it suffices to consider suboptimal arms $\vx$ with $\Delta(\vx) > 0$.
    Fix a constant $\epsilon > 0$ to be chosen later.
    We decompose $n_T(\vx)$ as
    \[
        \begin{aligned}
            n_T(\vx) 
            &= \sum_{t=0}^{T-1} \1\set{\vx_{t+1} = \vx} \\
            &\le \sum_{t=0}^{T-1} \1\set{f(\vx^*) \ge m_t(\vx^*) + A_0 \left( \hsigma_t(\vx^*) \sqrt{\ln(2 / \delta_t)} + \omega_f(\ell_t) + \epsilon \right)} \\
            &\phantom{=} + \sum_{t=0}^{T-1} \1\set{f(\vx^*) \le m_t(\vx) + A_0 \left( \hsigma_t(\vx) \sqrt{\ln(2 / \delta_t)} + \omega_f(\ell_t) + \epsilon \right), \vx_{t+1} = \vx},
        \end{aligned}
    \]
    where $A_0 > 0$ corresponds to the universal constant defined in \cref{eqn:pointwise error of KR by KDE}.

    For the first sum, taking expectation and applying \cref{thm:pointwise error of KR by KDE} at $\vx^*$ yields
    \[
        \begin{aligned}
            J_1 
            &\coloneqq \EE\left[ \sum_{t=0}^{T-1} \1\set{f(\vx^*) \ge m_t(\vx^*) + A_0 \left( \hsigma_t(\vx^*) \sqrt{\ln(2 / \delta_t)} + \omega_f(\ell_t) + \epsilon \right)} \right] \\
            &\le \sum_{t=0}^{T-1} \sum_{s=0}^{T-1} \PP\left[ f(\vx^*) - m_t(\vx^*) \ge A_0 \left( \hsigma_t(\vx^*) \sqrt{\ln(2 / \delta_t)} + \omega_f(\ell_t) + \epsilon \right) \given n_t(\vx^*) = s \right] \\
            &\le \sum_{t=0}^{T-1} \sum_{s=0}^{T-1} 2 \exp \left. \left( - \hsigma_t^{-2}(\vx^*) \left( \hsigma_t(\vx^*) \sqrt{\ln(2 / \delta_t)} + \epsilon \right)^2 \right) \right|_{n_t(\vx^*) = s} \\
            &\le \sum_{t=0}^{T-1} \sum_{s=0}^{T-1} 2 \exp \left. \left( \ln(\delta_t / 2) - \hsigma_t^{-2}(\vx^*) \epsilon^2 \right) \right|_{n_t(\vx^*) = s} \\
            &\le \sum_{t=0}^{T-1} \sum_{s=0}^{T-1} 2 \exp \left( \ln(\delta_t / 2) - \epsilon^2 s \right)
            = \sum_{t=0}^{T-1} \delta_t \sum_{s=0}^{T-1} e^{- \epsilon^2 s}
            \le \left(1 + \frac{1}{\epsilon^2} \right) \sum_{t=0}^{T-1} \delta_t.
        \end{aligned}
    \]
    The first inequality follows from the union bound over all possible values of $n_t(\vx^*)$. 
    The fourth inequality uses $\hsigma_t^{-2}(\vx^*) = W_t(\vx^*) + \varrho \ge n_t(\vx^*)$.
    The final inequality follows from $\sum_{s=0}^{T-1} e^{- \epsilon^2 s} \le 1 / (1 - e^{- \epsilon^2})$ and $e^{\epsilon^2} \ge \epsilon^2 + 1$.

    For the second sum, assume that $\delta_t$ is non-increasing in $t$.
    Taking expectation yields
    \[
        \begin{aligned}
            J_2 
            &\coloneqq \EE\left[ \sum_{t=0}^{T-1} \1\set{f(\vx^*) \le m_t(\vx) + A_0 \left( \hsigma_t(\vx) \sqrt{\ln(2 / \delta_t)} + \omega_f(\ell_t) + \epsilon \right), \vx_{t+1} = \vx} \right] \\
            &\le \EE\left[ \sum_{t=0}^{T-1} \1\set{f(\vx) - m_t(\vx) \le A_0 \left( \hsigma_t(\vx) \sqrt{\ln(2 / \delta_{T-1})} + \omega_f(\ell_t) + \epsilon \right) - \Delta(\vx), \vx_{t+1} = \vx} \right] \\
            &\le \EE\left[ \sum_{t=0}^{T-1} \1\set{f(\vx) - m_t(\vx) \le A_0 \left( \hsigma_t(\vx) \sqrt{\ln(2 / \delta_{T-1})} + \omega_f(\ell_t) + \epsilon \right) - \Delta(\vx)} \given n_t(\vx) = t \right] \\
            &\le \sum_{t=0}^{T-1} \PP\left[ f(\vx) - m_t(\vx) \le A_0 \left( t^{-1/2} \sqrt{\ln(2 / \delta_{T-1})} + \omega_f(\ell_t) + \epsilon \right) - \Delta(\vx) \given n_t(\vx) = t \right] \\
            &\le u + \sum_{t = \lceil u \rceil}^{T-1} \PP\left[ f(\vx) - m_t(\vx) \le A_0 \left( t^{-1/2} \sqrt{\ln(2 / \delta_{T-1})} + \omega_f(\ell_t) + \epsilon \right) - \Delta(\vx) \given n_t(\vx) = t \right].
        \end{aligned}
    \]
    The second inequality holds because each arm can be selected at most $n_T(\vx) \le T$ times.
    The third inequality uses $\hsigma_t^{-2}(\vx) \ge n_t(\vx)$.
    The role of constant $u > 0$ in the last line is to guarantee
    \[
        A_0^{-1} \Delta(\vx) - t^{-1/2} \sqrt{\ln(2 / \delta_{T-1})} - \omega_f(\ell_t) - \epsilon \ge 0, \quad \forall t \ge \lceil u \rceil.
    \]
    Since $\omega_f(\ell_t) \le \omega_f(\ellmax)$ for all $t \ge T_1$, it is required that
    \[
        \ellmax < \inf \set{\ell > 0 : \omega_f(\ell) \le A_0^{-1} \Delta_{\min}},
    \]
    where $\Delta_{\min} = \min_{\vx \in X : \Delta(\vx) > 0} \Delta(\vx)$.
    Hence, it suffices to choose
    \[
        u \ge \max\set{T_1, \frac{\ln(2 / \delta_{T-1})}{\left( A_0^{-1} \Delta(\vx) - \omega_f(\ellmax) - \epsilon \right)^2}}.
    \]
    Applying \cref{thm:pointwise error of KR by KDE} at $\vx$, the sum is further bounded by
    \[
        \begin{aligned}
            J_2 &\le u + \sum_{t = \lceil u \rceil}^{T-1} 2 \exp \left. \left( - \hsigma_t^{-2}(\vx) \left( A_0^{-1} \Delta(\vx) - t^{-1/2} \sqrt{\ln(2 / \delta_{T-1})} - \omega_f(\ell_t) - \epsilon \right)^2 \right) \right|_{n_t(\vx) = t} \\
            &\le u + \sum_{t = \lceil u \rceil}^{T-1} 2 \exp \left( - \left( \sqrt{t} \left( A_0^{-1} \Delta(\vx) - \omega_f(\ellmax) - \epsilon \right) - \sqrt{\ln(2 / \delta_{T-1})} \right)^2 \right) \\
            &\le 2 + u + \int_{u}^{\infty} 2 \exp \left( - \left( \sqrt{t} \left( A_0^{-1} \Delta(\vx) - \omega_f(\ellmax) - \epsilon \right) - \sqrt{\ln(2 / \delta_{T-1})} \right)^2 \right) \dif t \\
            &\le 2 + \frac{\ln(2 / \delta_{T-1}) + 2 \sqrt{\pi \ln(2 / \delta_{T-1})} + 2}{\left( A_0^{-1} \Delta(\vx) - \omega_f(\ellmax) - \epsilon \right)^2}
            \le 2 + \frac{2 \ln(2 / \delta_{T-1}) + 6}{\left( A_0^{-1} \Delta(\vx) - \omega_f(\ellmax) - \epsilon \right)^2}.
        \end{aligned}
    \]
    The second inequality $\hsigma_t^{-2}(\vx) \ge n_t(\vx)$ and $\omega_f(\ell_t) \le \omega_f(\ellmax)$ for $t \ge \lceil u \rceil$.
    The last line follows from the change of variable
    \[
        s = \sqrt{t} \left( A_0^{-1} \Delta(\vx) - \omega_f(\ellmax) - \epsilon \right) - \sqrt{\ln(2 / \delta_{T-1})},
    \]
    and by taking
    \[
        u = \frac{\ln(2 / \delta_{T-1})}{\left( A_0^{-1} \Delta(\vx) - \omega_f(\ellmax) - \epsilon \right)^2} 
        \ge \frac{\ln(2 / \delta_{T_2 - 1})}{\left( A_0^{-1} \Delta(\vx) - \omega_f(\ellmax) - \epsilon \right)^2} 
        \ge T_1,
    \]
    which holds for sufficiently large $T_2$.
    Let $\delta_t = 2 / (t+1)^2$, so that $\beta_t \coloneqq A_0 \sqrt{\ln (2 / \delta_t)} = A_0 \sqrt{2 \ln (t + 1)}$.
    Define $A_1 \coloneqq (2 A_0)^{-1} - \omega_f(\ellmax) / (2 \Delta_{\min}) > 0$, and set $\epsilon = A_1 \Delta(\vx)$.
    Then, $A_0^{-1} \Delta(\vx) - \omega_f(\ellmax) - \epsilon \ge A_1 \Delta(\vx)$.
    Let $\Delta_{\max} = \max_{\vx \in X} \Delta(\vx)$.
    If
    \[
        T_2 \ge \exp \left( \left( (A_0^{-1} - A_1) \Delta_{\max} - \omega_f(\ellmax) \right)^2 T_1 / 2 \right),
    \]
    we obtain
    \[
        J_2 \le 2 + \frac{2 \ln(2 / \delta_{T-1}) + 6}{\left( A_0^{-1} \Delta(\vx) - \omega_f(\ellmax) - \epsilon \right)^2}
        \le 2 + \frac{4 \ln T + 6}{A_1^2 \Delta^2(\vx)}.
    \]

    Finally, since $T \ge T_0 > 1$, combining the two parts gives
    \[
        \EE[ n_T(\vx) ] 
        \le J_1 + J_2
        \le \frac{\pi^2}{3} \left(1 + \frac{1}{A_1^2 \Delta^2(\vx)} \right) + 2 + \frac{4 \ln T + 6}{A_1^2 \Delta^2(\vx)}
        \le A_2 \left( 1 + \frac{\ln T}{\Delta^2(\vx)} \right),
    \]
    where $A_2 > 0$ is a universal constant.
    Hence, 
    \[
        \EE[\CumReg_T] 
        \le \sum_{\vx \in X : \Delta(\vx) > 0} A_2 \left( \Delta(\vx) + \frac{\ln T}{\Delta(\vx)} \right)
        \le \frac{A_2 M \ln T}{\Delta_{\min}} + A_2 \sum_{\vx \in X} \Delta(\vx).
    \]
    Moreover, for any $\Delta_0 > 0$,
    \[
        \begin{aligned}
            \EE[\CumReg_T]
            &= \sum_{\vx \in X : \Delta(\vx) < \Delta_0} \Delta(\vx) \EE[n_T(\vx)] + \sum_{\vx \in X : \Delta(\vx) \ge \Delta_0} \Delta(\vx) \EE[n_T(\vx)] \\
            &\le \Delta_0 T + \sum_{\vx \in X : \Delta(\vx) \ge \Delta_0} A_2 \left( \Delta(\vx) + \frac{\ln T}{\Delta(\vx)} \right) \\
            &\le \Delta_0 T + \frac{A_2 M \ln T}{\Delta_0} + A_2 \sum_{\vx \in X} \Delta(\vx) \\
            &\le 2 \sqrt{A_2 M T \ln T} + A_2 \sum_{\vx \in X} \Delta(\vx),
        \end{aligned}
    \]
    where the first inequality uses $\sum_{\vx \in X} \EE[n_T(\vx)] = T$, and the last inequality follows by choosing $\Delta_0 = \sqrt{A_2 M \ln T / T}$.
    The proof is completed by setting $C_1 = \max\set{ A_2, 2 \sqrt{A_2} }$ and $C_2 = A_2$.
\end{proof}

\subsection{General decision set}

For a general decision set $\fX$, the lack of a probabilistic prior (e.g., GPs) complicates the characterization of asymptotic regret rates.
Consequently, unlike GP-UCB (see, e.g., \cite[Theorem 2 and 3]{srinivas2010gaussian}), we cannot infer from \cref{thm:cumulative regret BOKE} that $\CumReg_T = \littleo(T)$ (i.e., that the algorithm achieves sublinear regret).
Nevertheless, analysis of cumulative regret remains useful for algorithm design, for example in selecting $\beta_t$.

\begin{theorem}
    \label{thm:cumulative regret BOKE}
    Suppose \cref{asm:domain,asm:function,asm:kernel,asm:noise,asm:bandwidth} hold.
    There exist a constant $C_0 > 0$ depending only on $\fX$, $\Psi$, $f$, $\varrho$, and $\varsigma^2$.
    For any $\delta \in (0, 1)$, let $\beta_t = C_0 ( 1 + \sqrt{d \ln(t + 1) + \ln(2 \pi^2 t^2 / \delta)} )$.
    Then, with probability at least $1 - \delta$,
    \begin{equation}
        \CumReg_T \le 4 \sqrt{T \beta_T^2 \hvarSigma_T} + 2 C_0 \varOmega_T + 2 T_0 \| f \|_{\infty}, \quad \forall T \ge T_0,
    \end{equation}
    where $\hvarSigma_T \coloneqq \sum_{t=T_0}^{T-1} \left( \hsigma_t^2(\vx_{t+1}) + t^{-2} W^{-2}_t(\vx_{t+1}) \right)$ and $\varOmega_T \coloneqq \sum_{t=T_0}^{T-1} \omega_f(\ell_t)$.
\end{theorem}

\begin{remark}
    Under additional assumptions on the decay of the fill distance, one can obtain sublinear regret for BOKE.
    For example, let $\ell_t = \bigTheta(t^{-\alpha})$ with $\alpha > 0$. 
    If $f$ is Lipschitz, then $\varOmega_T \lesssim \sum_{t=T_0}^{T-1} \ell_t \lesssim \sum_{t=1}^{T} t^{-\alpha} \lesssim T^{1 - \alpha}$. 
    Assume $\filldist_{\fX, \mX_t} \lesssim \ell_t$ and decays at the near-optimal rate $\bigO(t^{-\frac{1}{d} + \epsilon})$ for some $\epsilon > 0$. 
    Then \cref{lem:bound KDE by fill distance} implies $W_t(\vx_{t+1}) \gtrsim \ell_t^d \filldist_{\fX, \mX_t}^{-d} \gtrsim t^{1 - (\epsilon + \alpha) d}$, and consequently $\hvarSigma_T \lesssim \sum_{t=T_0}^{T-1} W^{-1}_t(\vx_{t+1}) \lesssim \sum_{t=1}^{T} t^{(\epsilon + \alpha) d - 1} \lesssim T^{(\epsilon + \alpha) d}$.
    Therefore, a sufficient condition for $\CumReg_T = \littleo(T)$ is $\epsilon + \alpha < \frac{1}{d}$.
\end{remark}

\begin{proof}
    Applying \cref{thm:pointwise error of KR by KDE} at the optimizer $\vx^*$, there exists $A_0 > 0$ such that for any $\delta \in (0, 1)$, with probability at least $1 - \delta$, if $W_t(\vx^*) > 0$ then
    \[
        \left| f(\vx^*) - m_t(\vx^*) \right| 
        \le A_0 \left( \hsigma_t(\vx^*) \sqrt{\ln(2 / \delta)} + \omega_f(\ell_t) \right).
    \]
    If $W_t(\vx^*) = 0$, then $\hsigma_t(\vx^*) = \varrho^{-1/2}$, and by \cref{lem:uniform error of extended KR},
    \[
        \begin{aligned}
            \left| f(\vx^*) - m_t(\vx^*) \right| 
            &\le 2 \| f \|_{\infty} + \sqrt{2 t^{-1} \varsigma^2 \ln(2 / \delta)} \\
            &\le \hsigma_t(\vx^*) \cdot \sqrt{\varrho} \left( 2 \| f \|_{\infty} + \sqrt{2 \varsigma^2 \ln(2 / \delta)} \right).
        \end{aligned}
    \]
    Hence, there exists $A_1 > 0$ such that
    \[
        \PP \left[ \left| f(\vx^*) - m_t(\vx^*) \right| \le A_1 \left( \hsigma_t(\vx^*) \left( 1 + \sqrt{\ln(2 / \delta)} \right) + \omega_f(\ell_t) \right) \right] \ge 1 - \delta.
    \]
    Set $\pi_t = \pi^2 t^2 / 6$, so $\sum_{t=1}^{\infty} 1 / \pi_t = 1$.
    By the union bound, with probability at least $1 - \delta / 2$, for all $t \ge T_0$,
    \[
        \left| f(\vx^*) - m_t(\vx^*) \right| \le A_1 \left( \hsigma_t(\vx^*) \left( 1 + \sqrt{\ln(4 \pi_t / \delta)} \right) + \omega_f(\ell_t) \right).
    \]
    Applying the union bound together with \cref{thm:uniform error of KR by KDE} to the iterates $\set{\vx_t}_{t > T_0}$, there exists $A_2 > 0$ such that with probability at least $1 - \delta / 2$, for all $t \ge T_0$,
    \[
        \begin{aligned}
            \left| f(\vx_{t+1}) - m_t(\vx_{t+1}) \right|
            \le A_2 \left( \left( \hsigma_t(\vx_{t+1}) + t^{-1} W^{-1}_t(\vx_{t+1}) \right) \sqrt{d \ln(t + 1) + \ln(8 \pi_t / \delta)} + \omega_f(\ell_t) \right).
        \end{aligned}
    \]
    Let $C_0 \coloneqq \max\set{A_1, A_2}$ and define $\beta_t = C_0 ( 1 + \sqrt{d \ln(t + 1) + \ln(2 \pi^2 t^2 / \delta)} )$. 
    Conditioning on the intersection of the above events yields, with probability at least $1 - \delta$, for all $t \ge T_0$,
    \[
        \begin{aligned}
            \left| f(\vx^*) - m_t(\vx^*) \right| &\le \beta_t \hsigma_t(\vx^*) + C_0 \omega_f(\ell_t), \\
            \left| f(\vx_{t+1}) - m_t(\vx_{t+1}) \right| &\le \beta_t \left( \hsigma_t(\vx_{t+1}) + t^{-1} W^{-1}_t(\vx_{t+1}) \right) + C_0 \omega_f(\ell_t).
        \end{aligned}
    \]
    Using $a_t(\vx_{t+1}) \ge a_t(\vx^*)$ and the two bounds above gives
    \[
        \begin{aligned}
             r_{t+1} 
             &= f(\vx^*) - f(\vx_{t+1}) \\
             &\le m_t(\vx^*) + \beta_t \hsigma_t(\vx^*) + C_0 \omega_f(\ell_t) - f(\vx_{t+1}) \\
             &\le m_t(\vx_{t+1}) + \beta_t \hsigma_t(\vx_{t+1}) + C_0 \omega_f(\ell_t) - f(\vx_{t+1}) \\
             &\le 2 \beta_t \hsigma_t(\vx_{t+1}) + \beta_t t^{-1} W^{-1}_t(\vx_{t+1}) + 2 C_0 \omega_f(\ell_t).
        \end{aligned}
    \]
    Summing from $t = T_0$ to $T - 1$ yields
    \[
        \begin{aligned}
            &\CumReg_T - \CumReg_{T_0} = \sum_{t=T_0+1}^{T} r_t \\
            \le& 2 \sum_{t=T_0}^{T-1} \beta_t \hsigma_t(\vx_{t+1}) + \sum_{t=T_0}^{T-1} \beta_t t^{-1} W^{-1}_t(\vx_{t+1}) + 2 C_0 \sum_{t=T_0}^{T-1} \omega_f(\ell_t) \\
            \le& 4 \sqrt{T \beta_T^2 \sum_{t=T_0}^{T-1} \left( \hsigma_t^2(\vx_{t+1}) + t^{-2} W^{-2}_t(\vx_{t+1}) \right)} + 2 C_0 \sum_{t=T_0}^{T-1} \omega_f(\ell_t),
        \end{aligned}
    \]
    where the last line follows from Cauchy--Schwarz and the monotonicity of $t \mapsto \beta_t$.
    Finally, note
    \[
        \CumReg_{T_0} \le \sum_{t=1}^{T_0} \left| f(\vx^*) - f(\vx) \right| \le 2 T_0 \| f \|_{\infty},
    \]
    which completes the proof.
\end{proof}

\section{Omitted proofs}
\label{app:proofs}

\subsection{Vanishing-bandwidth limits of Gaussian processes}
\label{app:vanishing bandwidth limits}

Before analyzing the limiting behavior as $\ell \to 0^+$, we transform \cref{eqn:GPR,eqn:GP variance} so that the covariance matrix $\mK_t$ is nonsingular (see \cref{lem:reduced GP}).
Such singularities can arise for finite decision sets and complicate the evaluation of the limit $\lim_{\ell \to 0^+} (\mK_t + \sigma^2 \mI_t)^{-1}$.

\begin{lemma}
    \label{lem:reduced GP}
    Let $f \sim \mathcal{GP}( \bm{0}, k(\cdot, \cdot) )$ be a Gaussian process. 
    For any $\mD_t = \set{(\vx_i, y_i)}_{i=1}^{t}$, let $\bar{\mX}_s = \set{\bar{\vx}_1, \dots, \bar{\vx}_s}$ represent the set of $s$ unique locations in $\mX_t$.
    For each $\bar{\vx}_{\lambda}$ with $\lambda \in \set{1, \dots, s}$, define its corresponding index set as $\fI_{\lambda} = \set{1 \le i \le t : \vx_i = \bar{\vx}_{\lambda}}$, and let $n_{\lambda} = \card{\fI_{\lambda}}$ be the number of its occurrences. 
    Then $\sum_{\lambda=1}^{s} n_{\lambda} = t$. 
    For any $\vx \in \fX$, the posterior statistics of $f(\vx)$ defined in \cref{eqn:GPR,eqn:GP variance} can be equivalently rewritten as
    \begin{align}
        \mu_t(\vx) & = \bar{\vk}_s^\transpose(\vx) (\bar{\mK}_s + \bar{\bm{\Sigma}}_s)^{-1} \bar{\vy}_s, \label{eqn:reduced GPR} \\
        \sigma^2_t(\vx) & = k(\vx, \vx) - \bar{\vk}_s^\transpose(\vx) (\bar{\mK}_s + \bar{\bm{\Sigma}}_s)^{-1} \bar{\vk}_s(\vx), \label{eqn:reduced GP variance}
    \end{align}
    where $\bar{\vk}_s(\vx) = ( k(\vx, \bar{\vx}_1), \dots, k(\vx, \bar{\vx}_s) )^\transpose$, $\bar{\mK}_s = ( k(\bar{\vx}_{\lambda}, \bar{\vx}_{\nu}) )_{\lambda, \nu=1}^{s}$, $\bar{\bm{\Sigma}}_s = \sigma^2 \diag\set{1/n_1, \dots, 1/n_s}$, and $\bar{\vy}_s = ( \bar{y}_1, \dots, \bar{y}_s )^\transpose$ with coordinates $\bar{y}_{\lambda} = \frac{1}{n_{\lambda}} \sum_{i \in \fI_{\lambda}} y_i$.
\end{lemma}

\begin{proof}
    Let $\mX = \set{\vx_1, \dots, \vx_n} \subset \fX$ be arbitrary $n$ sample points.
    Denote $f(\mX) = ( f(\vx_1), \dots, f(\vx_n) )^\transpose \in \RR^n$, $k(\mX, \vx) = k(\vx, \mX)^\transpose = ( k(\vx, \vx_1), \dots, k(\vx, \vx_n) )^\transpose \in \RR^n$, and $k(\mX, \mX) = ( k(\vx_i, \vx_j) )_{i,j=1}^{n} \in \RR^{n \times n}$. 
    Consider the following two cases:
    \[
        \begin{aligned}
            \left.\begin{bmatrix}
                f(\mX) \\ y_a \\ y_b
            \end{bmatrix} \right| \mX, \vx 
            &\sim \fN\left( \bm{0},
            \begin{bmatrix}
                k(\mX, \mX) & k(\mX, \vx) & k(\mX, \vx) \\
                k(\vx, \mX) & k(\vx, \vx) + \frac{\sigma^2}{n_a} & k(\vx, \vx) \\
                k(\vx, \mX) & k(\vx, \vx) & k(\vx, \vx) + \frac{\sigma^2}{n_b}
            \end{bmatrix}
            \right), \\
            \left.\begin{bmatrix}
                f(\mX) \\ \frac{n_a y_a + n_b y_b}{n_a + n_b}
            \end{bmatrix} \right| \mX, \vx 
            &\sim \fN\left( \bm{0},
            \begin{bmatrix}
                k(\mX, \mX) & k(\mX, \vx) \\
                k(\vx, \mX) & k(\vx, \vx) + \frac{\sigma^2}{n_a + n_b}
            \end{bmatrix}
            \right),
        \end{aligned}
    \]
    where $y_a$ and $y_b$ are the sample averages of independent observations of $f$ at the same location $\vx$, and $n_a, n_b \ge 1$ are the respective numbers of observations.
    By direct calculation, these two cases lead to the same Gaussian posterior for $f(\mX)$ with
    \[
        \begin{aligned}
            \EE \left[ f(\mX) \given \mX, \vx, y_a, y_b \right]
            &= \frac{n_a y_a + n_b y_b}{(n_a + n_b) k(\vx, \vx) + \sigma^2} k(\mX, \vx), \\
            \Var \left[ f(\mX) \given \mX, \vx, y_a, y_b \right]
            &= k(\mX, \mX) - \frac{n_a + n_b}{(n_a + n_b) k(\vx, \vx) + \sigma^2} k(\mX, \vx) k(\vx, \mX).
        \end{aligned}
    \]
    Thus, any two values $y_a$ and $y_b$ at the same location $\vx$ with occurrence numbers $n_a, n_b \ge 1$ can be reduced to their weighted average $\frac{n_a y_a + n_b y_b}{n_a + n_b}$ with a combined occurrence number of $n_a + n_b$. 
    Since the original dataset consists of individual observations with initial occurrence numbers of $1$, repeatedly applying this reduction rule to any overlapping points eventually merges all observations into the unique locations $\bar{\mX}_s$. 
    This yields the reduced posterior formulations in \cref{eqn:reduced GPR,eqn:reduced GP variance}, completing the proof.
\end{proof}

Based on \cref{lem:reduced GP}, we derive the limiting behavior of GP (\cref{pro:degenerate GP}) and use it to analyze the relationship between the GP posterior variance $\sigma_t^2$ and the density-based uncertainty function $\hsigma_t^2$ (\cref{rmk:degenerate GP vs KDE}).

\begin{proposition}
    \label{pro:degenerate GP}
    Suppose $\Psi$ is continuous at the origin. For any $\mD_t = \set{(\vx_i, y_i)}_{i=1}^{t}$ and $\vx \in \fX$,
    \begin{align}
        \lim_{\ell \to 0^+} \mu_t(\vx) &= \frac{\sum_{i=1}^{t} y_i \1_{\set{\vx = \vx_i}}}{\sigma^2 / \Psi(\bm{0}) + \sum_{i=1}^{t} \1_{\set{\vx = \vx_i}}}, \label{eqn:degenerate GPR} \\
        \lim_{\ell \to 0^+} \sigma^2_t(\vx) &= \frac{\sigma^2}{\sigma^2 / \Psi(\bm{0}) + \sum_{i=1}^{t} \1_{\set{\vx = \vx_i}}}. \label{eqn:degenerate GP variance}
    \end{align}
\end{proposition}

\begin{proof}
    By \cref{lem:reduced GP}, the GP posterior can be rewritten as
    \[
        \begin{aligned}
            \mu_t(\vx) &= \bar{\vk}_s^\transpose(\vx) (\bar{\mK}_s + \bar{\bm{\Sigma}}_s)^{-1} \bar{\vy}_s, \\
            \sigma^2_t(\vx) &= \Psi(\bm{0}) - \bar{\vk}_s^\transpose(\vx) (\bar{\mK}_s + \bar{\bm{\Sigma}}_s)^{-1} \bar{\vk}_s(\vx),
        \end{aligned}
    \]
    where $\bar{\vk}_s(\vx)$, $\bar{\mK}_s$, $\bar{\bm{\Sigma}}_s$ and $\bar{\vy}_s$ are defined as in \cref{lem:reduced GP}.
    As $\ell \to 0^+$, $\bar{\mK}_s$ converges to $\Psi(\bm{0}) \mI_{s}$, and $\bar{\vk}_s(\vx)$ converges to $( \Psi(\bm{0}) \1_{\set{\vx = \bar{\vx}_1}}, \dots, \Psi(\bm{0}) \1_{\set{\vx = \bar{\vx}_s}} )^\transpose$.
    Then
    \[
        \lim_{\ell \to 0^+} \mu_t(\vx)
        = \sum_{\lambda=1}^{s} \frac{\bar{y}_{\lambda} \Psi(\bm{0}) \1_{\set{\vx = \bar{\vx}_{\lambda}}}}{\Psi(\bm{0}) + \frac{\sigma^2}{n_{\lambda}}}
        = \frac{\sum_{i=1}^{t} y_i \1_{\set{\vx = \vx_i}}}{\frac{\sigma^2}{\Psi(\bm{0})} + \sum_{i=1}^{t} \1_{\set{\vx = \vx_i}}},
    \]
    and
    \[
        \begin{aligned}
            \lim_{\ell \to 0^+} \sigma^2_t(\vx)
            &= \Psi(\bm{0}) - \sum_{\lambda=1}^{s} \frac{\Psi(\bm{0})^2 \1_{\set{\vx = \bar{\vx}_{\lambda}}}}{\Psi(\bm{0}) + \frac{\sigma^2}{n_{\lambda}}} \\
            &= \Psi(\bm{0}) \left( 1 - \frac{\sum_{i=1}^{t} \1_{\set{\vx = \vx_i}}}{\frac{\sigma^2}{\Psi(\bm{0})} + \sum_{i=1}^{t} \1_{\set{\vx = \vx_i}}} \right) \\
            &= \frac{\sigma^2}{\frac{\sigma^2}{\Psi(\bm{0})} + \sum_{i=1}^{t} \1_{\set{\vx = \vx_i}}},
        \end{aligned}
    \]
    which completes the proof.
\end{proof}

\begin{remark}\label{rmk:degenerate GP vs KDE}
    By comparing \cref{eqn:degenerate GP variance} with the vanishing-bandwidth limit of the density-based uncertainty function, given by
    \[
        \lim_{\ell \to 0^+} \hsigma_t^2(\vx) 
        = \frac{1}{\varrho + \Psi(\bm{0}) \sum_{i=1}^{t} \1_{\set{\vx = \vx_i}}}
        = \frac{1 / \Psi(\bm{0})}{\varrho / \Psi(\bm{0}) + \sum_{i=1}^{t} \1_{\set{\vx = \vx_i}}},
    \]
    we observe that as $\ell \to 0^+$, $\hsigma_t^2$ shares a similar limiting structural form with $\sigma_t^2$.
    Thus $\varrho$ plays the role of the additive noise variance $\sigma^2$: both provide a regularizing constant in the denominator that prevents the Dirac-comb term $\sum_{i=1}^t \1_{\set{\vx=\vx_i}}$ from becoming singular.
\end{remark}

\subsection{Proofs for \texorpdfstring{\cref{sec:prediction error bound}}{Section 4.1}}
\label{app:proof of prediction error bound}

\begin{proof}[Proof of \cref{lem:uniform error of extended KR}]
    For any $\lambda \ge 0$, Hoeffding's inequality implies that the average noise satisfies
    \[
        \PP \left[ \left| \frac{1}{t} \sum_{i=1}^{t} \varepsilon_i \right| \ge \lambda \right]
        \le 2 \exp \left( - \frac{\lambda^2 t}{2 \varsigma^2} \right).
    \]
    Given $\delta \in (0, 1)$, let $\lambda = \sqrt{2 \varsigma^2 \ln(2 / \delta) / t}$.
    Then, with probability at least $1 - \delta$,
    \[
        \begin{aligned}
            \left| f(\vx) -  \frac{1}{t} \sum_{i=1}^{t} y_i \right|
            &\le \frac{1}{t} \sum_{i=1}^{t} \left| f(\vx) - f(\vx_i) \right| + \left| \frac{1}{t} \sum_{i=1}^{t} \varepsilon_i \right| \\
            &\le 2 \sup_{\vx' \in \fX} |f(\vx')| + \sqrt{2 t^{-1} \varsigma^2 \ln(2 / \delta)},
        \end{aligned}
    \]
    which completes the proof.
\end{proof}

\begin{lemma}
    \label{lem:lower bound volume}
    Suppose \cref{asm:domain} holds.
    For any $D > 0$, there exists a constant $v_0 > 0$ depending only on $\fX, D$.
    For any $\epsilon \in (0, D]$ and $\vx \in \fX$,
    \begin{equation}
        \vol(B(\vx, \epsilon) \cap \fX) \ge v_0 \epsilon^d,
    \end{equation}
    where $\vol(\cdot)$ denotes the Lebesgue measure in $\RR^d$.
\end{lemma}

\begin{proof}
    Since $\fX$ has non-empty interior, there exists a point $\vc \in \fX$ and a radius $r > 0$ such that $B(\vc, r) \subset \fX$.
    Let $t = \frac{\epsilon/2}{\| \vx - \vc \| + \epsilon/2} \in (0, 1)$. 
    Define $\vy \coloneqq \vx + t(\vc - \vx) \in \fX$ as a convex combination of $\vx$ and $\vc$, then $\| \vx - \vy \| = \frac{\| \vx - \vc \| \epsilon/2}{\| \vx - \vc \| + \epsilon/2} < \epsilon/2$.
    Let $s = \min\set{tr, \epsilon/2}$. The boundedness of $\fX$ yields
    \[
        s \ge \frac{\epsilon}{2} \min\set{\frac{r}{\diam(\fX) + \epsilon/2}, 1} \ge \epsilon \min\set{\frac{r}{2\diam(\fX) + D}, \frac{1}{2}}.
    \]
    For any $\vz \in B(\vy, s)$, we have $\| \vz - \vx \| \le \| \vz - \vy \| + \| \vx - \vy \| \le s + \epsilon/2 \le \epsilon$, then $\vz \in B(\vx, \epsilon)$. Moreover, by $\| \vz - \vy \| \le s \le t r$,
    \[
        \vz = \vy + \vz - \vy
        = (1-t)\vx + t\vc + t \cdot \frac{\vz-\vy}{t}
        = (1-t)\vx + t \left(\vc + \frac{\vz-\vy}{t}\right)
        \in \fX.
    \]
    Hence, $B(\vy, s) \subset B(\vx, \epsilon) \cap \fX$. The proof is completed by
    \[
        \vol( B(\vy, s) ) 
        = \vol(\unitball_d) s^d
        \ge \vol(\unitball_d) \left( \epsilon \min\set{\frac{r}{2\diam(\fX) + D}, \frac{1}{2}} \right)^d
        \eqqcolon v_0 \epsilon^d > 0,
    \]
    where $\unitball_d$ is the $d$-dimensional unit ball.
\end{proof}

\begin{proof}[Proof of \cref{lem:bound KDE by fill distance}]
    Since $\Psi(\bm{0}) = 1$ and $\Psi$ is continuous, there exists $0 < \epsilon \le \min\set{R_{\Psi} / 2, 2 \ellmax^{-1} D}$ such that $c_0 \coloneqq \inf_{\| \vx \| \le \epsilon} \Psi(\vx) > 0$, where $D$ is specified in \cref{lem:lower bound volume}. 
    Hence,
    \[
        W_t(\vx)
        = \sum_{i=1}^{t} \Psi\left( \frac{\vx - \vx_i}{\ell_t} \right)
        \ge \card{(\closure{B}(\vx, \epsilon \ell_t) \cap \mX_t)} c_0.
    \]
    By the definition of the fill distance, for any $\vx \in \fX$, there exists $\vx_i \in \mX_t$ such that $\| \vx - \vx_i \| \le \filldist_{\fX, \mX_t}$, so $\mX_t$ is a $\filldist_{\fX, \mX_t}$-covering of $\fX$. 
    If $\filldist_{\fX, \mX_t} \le \epsilon \ell_t / 2$, then $\closure{B}(\vx, \epsilon \ell_t) \cap \mX_t$ is a $\filldist_{\fX, \mX_t}$-covering of $B( \vx, \epsilon \ell_t / 2) \cap \fX$. 
    Write $\closure{B}(\vx, \epsilon \ell_t) \cap \mX_t = \set{\vz_i}_{i=1}^{N}$ with $N \coloneqq \card{(\closure{B}(\vx, \epsilon \ell) \cap \mX_t)}$. 
    The volume of $B(\vx, \epsilon \ell_t / 2) \cap \fX$ is bounded by the total volume of the covering balls, hence
    \[
        \vol(B(\vx, \epsilon \ell_t / 2) \cap \fX)
        \le \vol\left( \bigcup_{i=1}^{N} B(\vz_i, \filldist_{\fX, \mX_t}) \right)
        \le N \filldist_{\fX, \mX_t}^d \vol(\unitball_d).
    \]
    Since $\filldist_{\fX, \mX_t} \le \epsilon \ell_t / 2 \le D$, applying \cref{lem:lower bound volume} to $\vol(B(\vx, \epsilon \ell_t / 2) \cap \fX)$ and combining the above inequalities gives
    \[
        W_t(\vx) 
        \ge \frac{\vol(B(\vx, \epsilon \ell_t / 2) \cap \fX)}{\vol(\unitball_d) \filldist_{\fX, \mX_t}^d} c_0
        \ge \ell_t^d \filldist_{\fX, \mX_t}^{-d} \frac{(\epsilon/2)^d v_0 c_0}{\vol(\unitball_d)}
        \eqqcolon w_0 \ell_t^d \filldist_{\fX, \mX_t}^{-d}.
    \]
    This proof is completed by setting $\filldist_0 = \epsilon / 2$.
\end{proof}

\subsection{Proofs for \texorpdfstring{\cref{sec:algorithmic consistency}}{Section 4.2}}
\label{app:proof of algorithmic consistency}

\begin{lemma}[SNEB property of DE]
    \label{lem:SNEB DE}
    Suppose \cref{asm:domain,asm:function,asm:kernel} hold. For any fixed $\ell_t \equiv \ell$, the following statements hold:
    \begin{enumerate}[label=(\alph*)]    
        \item\label{it1:SNEB DE} For any sequence $\set{\vx_t}_{t \ge 1} \subset \fX$, let $\mX_t = \set{\vx_1, \dots, \vx_t}$. If there exists $\vx \in \fX$ with $\inf_{t} d(\vx, \mX_t) > R_{\Psi} \ell$, then $\hsigma(\vx; \mX_t) = \varrho^{-1/2}$ for all $t \ge 1$.

        \item\label{it2:SNEB DE} For any convergent sequence $\set{\vx_t}_{t \ge 1} \subset \fX$ and any sequence of finite sets $\tilde{\mX}_t$ containing $\set{\vx_1, \dots, \vx_t}$, we have $\hsigma(\vx_t; \tilde{\mX}_{t-1}) = \bigO(t^{-1/2})$ as $t \to \infty$.
    \end{enumerate}
\end{lemma}

\begin{proof}
    \proofparagraph{\cref{it1:SNEB DE}}
    Since $\inf_{t} d(\vx, \mX_t) > R_{\Psi} \ell$ implies $\| \vx - \vx_i \| > R_{\Psi} \ell$ for every $i \ge 1$.
    By \cref{asm:kernel}, we have $\Psi( (\vx - \vx_i) / \ell ) = 0$ for all $i$, hence
    \[
        W(\vx; \mX_t) = \sum_{i=1}^{t} \Psi\left( \frac{\vx - \vx_i}{\ell} \right) = 0,
    \]
    and therefore $\hsigma(\vx; \mX_t) = \varrho^{-1/2}$ by definition.

    \proofparagraph{\cref{it2:SNEB DE}}
    Since $\Psi(\bm{0}) = 1$ and $\Psi$ is continuous, there exists $\epsilon > 0$ such that $\inf_{\| \vx \| \le \epsilon} \Psi(\vx) > 0$. As $\set{\vx_t}$ converges, there exists $N = N(\epsilon)$ with $\| \vx_i - \vx_j \| \le \epsilon \ell$ for all $i, j \ge N$. Hence, for $t \ge N + 1$,
    \[
        W(\vx_t; \tilde{\mX}_{t-1}) \ge \sum_{i=N}^{t-1} \Psi\left(\frac{\vx_t - \vx_i}{\ell}\right) \ge (t - N) \inf_{\| \vx \| \le \epsilon} \Psi(\vx).
    \]
    Thus $W(\vx_t; \tilde{\mX}_{t-1}) = \Omega(t)$ and $\hsigma(\vx_t; \tilde{\mX}_{t-1}) = (\Omega(t) + \varrho)^{-1/2} = \bigO(t^{-1/2})$ as $t \to \infty$.
\end{proof}

\begin{proof}[Proof of \cref{lem:SNEB IKR-UCB}]
    \proofparagraph{\cref{it1:SNEB IKR-UCB}}
    By \cref{lem:SNEB DE}\cref{it1:SNEB DE}, $W(\vx; \mX_t) = 0$ for all $t \ge 1$.
    By \cref{lem:uniform error of extended KR}, for any $\lambda \ge 0$, with probability at least $1 - 2 \exp\left( -\frac{\lambda^2 t}{2 \varsigma^2} \right)$,
    \[
        |m(\vx; \mD_t)|
        \le |f(\vx)| + |f(\vx) - m(\vx; \mD_t)|
        \le |f(\vx)| + 2 \| f \|_{\infty} + \lambda
        \le 3 \| f \|_{\infty} + \lambda.
    \]
    Since $\lim_{t \to \infty} \beta_t = \infty$, for any $\epsilon > 0$, there exists $N = N(\epsilon)$ such that for all $t \ge N$, the choice $\lambda = \beta_t \epsilon - 3 \| f \|_{\infty} \ge 0$ is valid. 
    Hence,
    \[
        \PP\left[ |\beta_t^{-1} m(\vx; \mD_t)| > \epsilon \right]
        \le 2 \exp\left( -\frac{(\beta_t \epsilon - 3 \| f \|_{\infty})^2 t }{2 \varsigma^2 (1 + \varrho^{-1})} \right)
        \to 0 \quad \text{ as } t \to \infty.
    \]
    Thus $|\beta_t^{-1} m(\vx; \mD_t)| \pto 0$ as $t \to \infty$. Since $\hsigma(\vx; \mX_t) = \varrho^{-1/2}$, it follows that $\tilde{a}(\vx; \mD_t) \pto \varrho^{-1/2}$ as $t \to \infty$.

    \proofparagraph{\cref{it2:SNEB IKR-UCB}}
    By \cref{lem:SNEB DE}\cref{it2:SNEB DE}, $W(\vx_{t+1}; \tilde{\mX}_t) = \bigOmega(t)$ and $\hsigma(\vx_{t+1}; \tilde{\mX}_t) = \bigO(t^{-1/2})$ as $t \to \infty$. 
    For clarity we shift the index and denote $n = n(t) \coloneqq \card{\tilde{\mX}_t} \ge t$.
    Applying \cref{thm:uniform error of KR by KDE} to dataset $\tilde{\mD}_t$ of size $n$, there exists a universal constant $C_0 > 0$ such that, for any $\delta \in (0, 1)$, with probability at least $1 - \delta$,
    \[
        \begin{aligned}
            &\left| f(\vx_{t+1}) - m(\vx_{t+1}; \tilde{\mD}_t) \right| \\
            \le& C_0 \left( \left( \hsigma(\vx_{t+1}; \tilde{\mX}_t) + n^{-1} W^{-1}(\vx_{t + 1}; \tilde{\mX}_t) \right) \sqrt{d \ln(n + 1) + \ln(4 / \delta)} + \omega_f(\ell) \right).
        \end{aligned}
    \]
    Since $W(\vx_{t+1}; \tilde{\mX}_t) \to \infty$ as $t \to \infty$, there exists $N_1 > 0$ such that for all $t \ge N_1$, $W(\vx_{t+1}; \tilde{\mX}_t) + \varrho \le W^2(\vx_{t+1}; \tilde{\mX}_t)$. Hence for $t \ge N_1$,
    \[
        n^{-1} W^{-1}(\vx_{t + 1}; \tilde{\mX}_t) \le (W(\vx_{t+1}; \tilde{\mX}_t) + \varrho)^{-1/2} = \hsigma(\vx_{t+1}; \tilde{\mX}_t),
    \]
    and therefore
    \[
        \left| f(\vx_{t+1}) - m(\vx_{t+1}; \tilde{\mD}_t) \right| 
        \le C_0 \left( 2 \hsigma(\vx_{t+1}; \tilde{\mX}_t) \sqrt{d \ln(n + 1) + \ln(4 / \delta)} + \omega_f(\ell) \right).
    \]
    Choose $\lambda \ge 0$ and set
    \[
        \delta = 4 \exp\left( -\frac{\lambda^2}{4 C_0^2 \hsigma^2(\vx_{t+1}; \tilde{\mX}_t)} + d \ln(n + 1) \right) \in (0, 1).
    \]
    Combining this with $\omega_f(\ell) \le 2 \| f \|_{\infty}$ yields
    \[
            |m(\vx_{t+1}; \tilde{\mD}_t)|
            \le |f(\vx_{t+1})| + C_0 \omega_f(\ell) + \lambda
            \le (2 C_0 + 1) \| f \|_{\infty} + \lambda.
    \]
    As $\hsigma(\vx_{t+1}; \tilde{\mX}_t) = \bigO(t^{-1/2})$, there exist $N_2, A_1 > 0$ such that for all $t \ge N_2$, $( 4 C_0^2 \hsigma^2(\vx_{t+1}; \tilde{\mX}_t) )^{-1} \ge A_1 t$.
    Also there exist $N_3, A_2 > 0$ such that for all $t \ge N_3$, $d \ln(n + 1) \le A_2 \ln n$.
    Set $\lambda = \beta_n \epsilon_t - C_1 \ge 0$, where $\epsilon_t \coloneqq \beta_n^{-1} \max\set{6 C_1, \sqrt{\frac{3 A_2 \ln n}{A_1 t}}}$ and $C_1 \coloneqq (2 C_0 + 1) \| f \|_{\infty}$. 
    For all $t \ge N \coloneqq \max\set{N_1, N_2, N_3}$, we obtain
    \[
        \begin{aligned}
            \PP\left[ |\beta_n^{-1} m(\vx_{t+1}; \tilde{\mD}_t)| > \epsilon_t \right]
            &\le 4 \exp\left( -\frac{\left( \beta_n \epsilon_t - C_1 \right)^2}{4 C_0^2 \hsigma^2(\vx_{t+1}; \tilde{\mX}_t)} + d \ln(n + 1) \right) \\
            &\le 4 \exp\left( -A_1 t \left( \beta_n \epsilon_t - C_1 \right)^2 + A_2 \ln n \right) \\
            &= 4 \exp\left( -A_1 \beta_n^2 \epsilon_t^2 t + 2 A_1 C_1 \beta_n \epsilon_t t + A_2 \ln n \right) \\
            &\le 4 \exp\left( -A_1 \beta_n^2 \epsilon_t^2 t + \frac{A_1}{3} \beta_n^2 \epsilon_t^2 t + \frac{A_1}{3} \beta_n^2 \epsilon_t^2 t \right) \\
            &= 4 \exp\left( - \frac{A_1}{3} \beta_n^2 \epsilon_t^2 t \right) \\
            &= 4 \exp\left( -\max\set{12 A_1 C_1^2 t, A_2 \ln n} \right) 
            \to 0 \quad \text{ as } t \to \infty,
        \end{aligned}
    \]
    where the last inequality and the last equality follow from the definition of $\epsilon_t$.
    The choice $\delta \in (0, 1)$ is valid for these $t$.
    Since $\beta_n = \bigOmega(\sqrt{\ln n})$, we obtain
    \[
        \epsilon_t
        = \max\set{\bigO\left( \frac{1}{\sqrt{\ln n}} \right), \bigO\left( \frac{1}{\sqrt{t}} \right)}
        \to 0 \quad \text{ as } t \to \infty.
    \]
    Hence, $|\beta_n^{-1} m(\vx_{t+1}; \tilde{\mD}_t)| \pto 0$ as $t \to \infty$. 
    Combined with $\tilde{a}(\vx_{t+1}; \tilde{\mD}_t) \pto 0$, this implies $\hsigma(\vx_{t+1}; \tilde{\mX}_t) \to 0$ as $t \to \infty$, completing the proof.
\end{proof}

\begin{proof}[Proof of \cref{cor:consistency DE}]
    \proofparagraph{\cref{it1:consistency DE}}
    Suppose for contradiction that $\inf_{t} \filldist_{\fX, \mX_t} > R_{\Psi} \ell$. 
    Then, the monotonicity of $\filldist_{\fX, \mX_t}$ with respect to $t$ implies $\lim_{t \to \infty} \filldist_{\fX, \mX_t} > R_{\Psi} \ell$. 
    Hence, there exist constants $N$ and $\epsilon > 0$ such that for all $t \ge N$, $\filldist_{\fX, \mX_t} = \sup_{\vx \in \fX} d(\vx, \mX_t) \ge R_{\Psi} \ell + \epsilon$. 
    Let $F_t = \set{\vx \in \fX : d(\vx, \mX_t) \ge R_{\Psi} \ell + \epsilon}$. 
    The continuity of $d(\vx, \mX_t)$ with respect to $\vx$ and the compactness of $\fX$ imply that $F_t$ is closed and non-empty. 
    Since $d(\vx, \mX_t) \ge d(\vx, \mX_{t+1})$, we have $\fX \supset F_t \supset F_{t+1}$. 
    By Cantor's intersection theorem, $\cap_{t=N}^{\infty} F_t \neq \varnothing$. 
    Thus, there exists $\vx' \in \cap_{t=N}^{\infty} F_t$ such that for all $t \ge N$, $d(\vx', \mX_t) \ge R_{\Psi} \ell + \epsilon$. 
    Hence, $\inf_t d(\vx', \mX_t) = \lim_{t \to \infty} d(\vx', \mX_t) > R_{\Psi} \ell$. 
    By \cref{lem:SNEB DE}\cref{it1:SNEB DE}, $\hsigma(\vx'; \mX_t) = \varrho^{-1/2}$ for all $t \ge 1$.

    By compactness of $\fX$, $\set{\vx_t}_{t \ge 1}$ has a convergent subsequence $\set{\vx_{t_n}}_{n \ge 1}$.
    By \cref{lem:SNEB DE}\cref{it2:SNEB DE}, $\lim_{n \to \infty} \hsigma(\vx_{t_n}; \mX_{t_n - 1}) = 0$.
    Hence,
    \[
        \lim_{n \to \infty} \hsigma(\vx'; \mX_{t_n - 1}) - \hsigma(\vx_{t_n}; \mX_{t_n - 1}) = \varrho^{-1/2}.
    \]
    However, the DE criterion \eqref{eqn:min KDE} gives $\vx_{t_n} \in \argmax_{\vx \in \fX} \hsigma(\vx; \mX_{t_n - 1})$, so 
    \[
        \hsigma(\vx'; \mX_{t_n - 1}) \le \hsigma(\vx_{t_n}; \mX_{t_n - 1}) \quad \forall n \ge 1.
    \]
    This yields a contradiction.
    Therefore, $\inf_{t} \filldist_{\fX, \mX_t} \le R_{\Psi} \ell$.

    \proofparagraph{\cref{it2:consistency DE}}
    Let $\set{\ell'_n}_{n \ge 1}$ be a deterministic sequence chosen to satisfy $\lim_{n \to \infty} \ell'_n = 0$, such as $\ell'_n = 1 / n$ for all $n \ge 1$.
    We recursively construct the bandwidth schedule $\set{\ell_t}_{t \ge 1}$ alongside a sequence of times $\set{\tau_n}_{n \ge 0}$. 
    Let $\tau_0 = 0$. 
    For each $n \ge 1$, maintain the bandwidth $\ell_t = \ell'_n$ for all $t > \tau_{n-1}$ until the following stopping time is reached:
    \[
        \tau_n = \inf\set{t > \tau_{n-1} : \filldist_{\fX, \mX_t} < 2 R_{\Psi} \ell'_n}.
    \]
    Assume inductively that $\tau_{n-1} < \infty$ for some $n \ge 1$.
    The iterates $\set{\vx_t}_{t > \tau_{n-1}}$ can be seen as being generated by a restarted DE algorithm given the initial set $\mX_{\tau_{n-1}}$ and a fixed bandwidth $\ell'_n$.
    By part \cref{it1:consistency DE}, these iterates satisfy $\inf_{t > \tau_{n-1}} \filldist_{\fX, \mX_t} \le R_{\Psi} \ell'_n$, and therefore $\tau_n < \infty$. 
    By mathematical induction, $\tau_n < \infty$ holds for all $n \ge 1$.
    Since $\set{\tau_n}_{n \ge 0}$ is a sequence of finite times, the recursively constructed bandwidth can be equivalently written as
    \[
        \ell_t = \sum_{n=1}^{\infty} \ell'_n \1_{\set{\tau_{n-1} < t \le \tau_n}}, \quad \forall t \ge 1.
    \]
    Since $\filldist_{\fX, \mX_t}$ is monotonically non-increasing with respect to $t$, $\set{\tau_n}_{n \ge 0}$ is strictly increasing and $\tau_n \asto \infty$.
    Consequently, as $t \to \infty$, the active bandwidth $\ell_t$ tracks the tail of $\set{\ell'_n}_{n \ge 1}$, yielding $\lim_{t \to \infty} \ell_t = 0$ and $\lim_{n \to \infty} \filldist_{\fX, \mX_{\tau_n}} = \lim_{n \to \infty} 2 R_{\Psi} \ell'_n = 0$.
    Combining this with the monotonicity of $\filldist_{\fX, \mX_t}$ gives $\lim_{t \to \infty} \filldist_{\fX, \mX_t} = 0$.
\end{proof}

\newpage
\section{Additional numerical results}
\label{app:additional numerics}

The supplementary results in \cref{fig:benchmark noisefree batch 2,fig:benchmark noisy batch 2} generally corroborate the main text regarding the overall competitiveness of BOKE and BOKE+.

\begin{figure}[H]
    \centering
    \includegraphics[width=.98\textwidth]{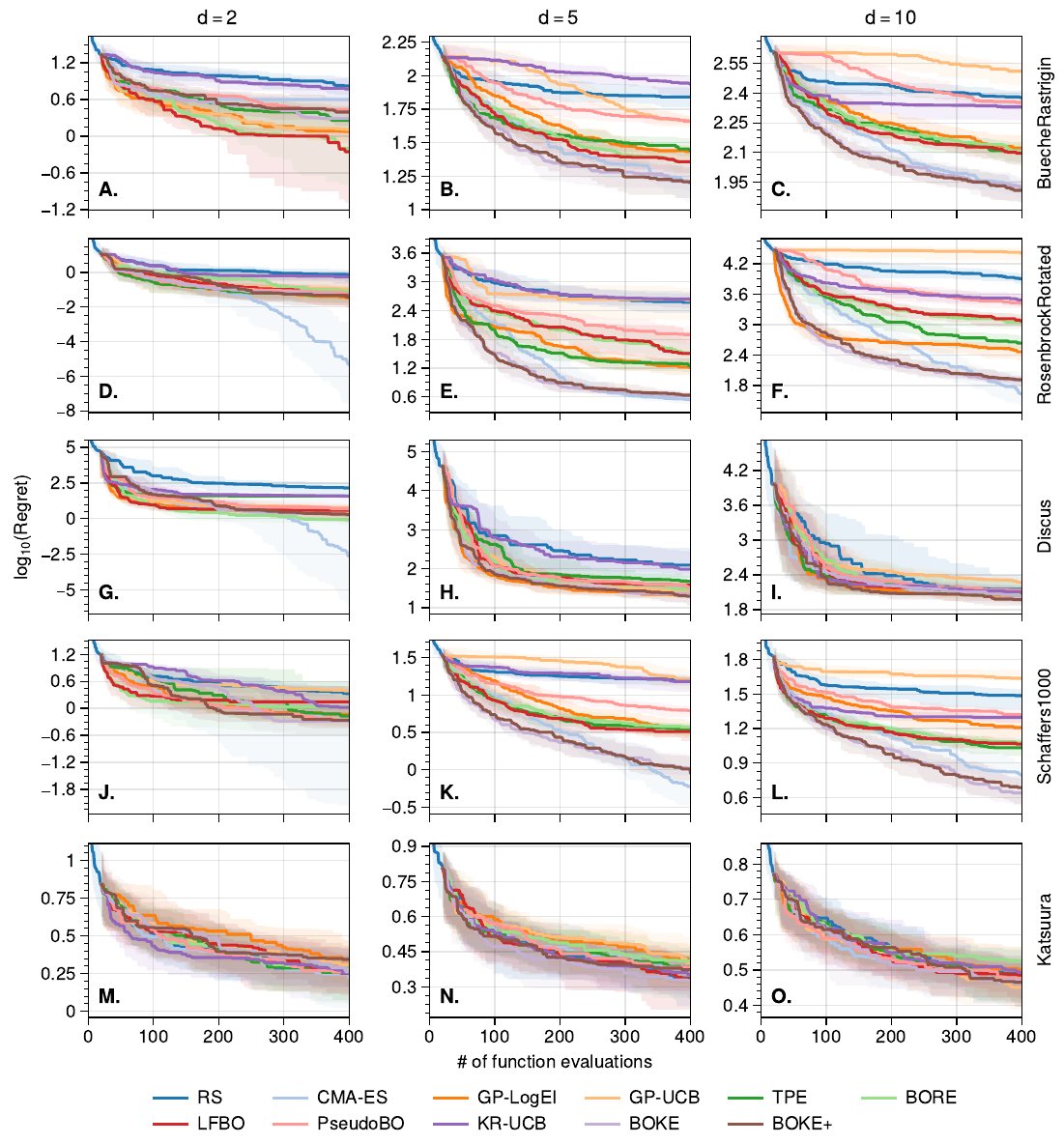}
    \caption{
        Logarithm of the simple regret and its distribution for noise-free evaluations on benchmark functions.
        Each column corresponds to a dimension $d \in \set{2, 5, 10}$; each row corresponds to a synthetic function, with the function name shown at the right of the row.
        Each plot displays the median and interquartile range over $50$ runs.
        All algorithms share the same $20$ initial sampling points generated by LHS.
    }
    \label{fig:benchmark noisefree batch 2}
\end{figure}

Our algorithms still underperform in certain low-dimensional settings ($d = 2$).
Notably, on the B\"uche-Rastrigin and Discus functions (panels A and G), density-ratio-based methods such as BORE and LFBO exhibit superior exploitation capabilities and achieve exceptionally rapid convergence.
Finally, these additional benchmarks further highlight the sample efficiency and robustness of BO methods compared to CMA-ES, particularly under increasing dimensionality and observation noise.

\begin{figure}[H]
    \centering
    \includegraphics[width=.98\textwidth]{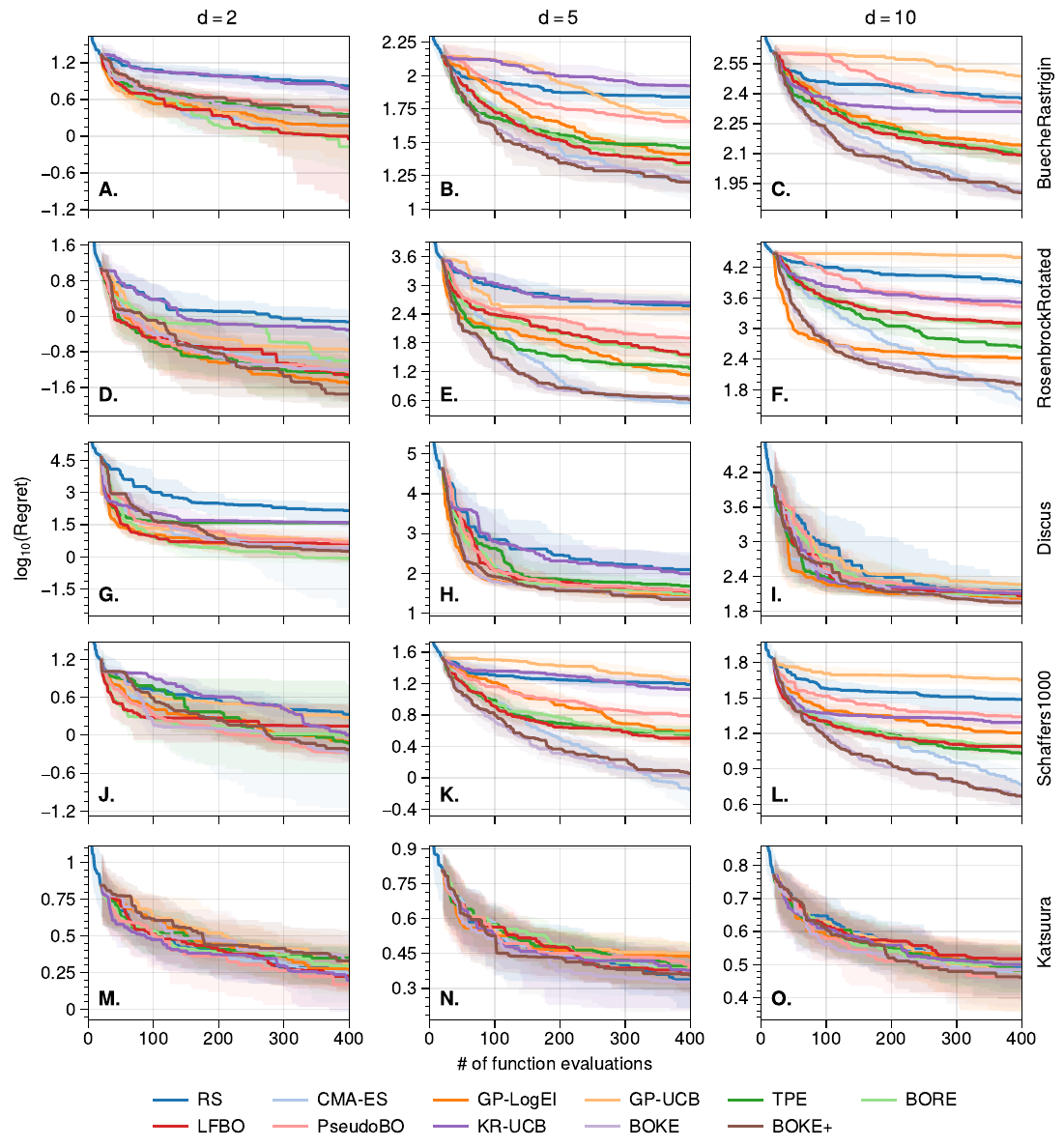}
    \caption{
        Logarithm of the simple regret and its distribution for noisy evaluations on benchmark functions.
        Each column corresponds to a dimension $d \in \set{2, 5, 10}$; each row corresponds to a synthetic function, with the function name shown at the right of the row.
        Each plot displays the median and interquartile range over $50$ runs.
        All algorithms share the same $20$ initial sampling points generated by LHS.
    }
    \label{fig:benchmark noisy batch 2}
\end{figure}

\printbibliography[heading=bibintoc, title=\ebibname]

\end{document}